 \newtheorem{thm}{Theorem}[section]
 \newtheorem{cor}[thm]{Corollary}
 \newtheorem{lem}[thm]{Lemma}
 \newtheorem{prop}[thm]{Proposition}
 \newtheorem{cjt}{Conjecture}
 \theoremstyle{definition}
 \newtheorem{defn}[thm]{Definition}
 \newtheorem{ex}[thm]{Example}
 \theoremstyle{remark}
 \newtheorem{rem}[thm]{Remark}
 \newtheorem{notation}[thm]{Notation}
 \numberwithin{equation}{section}
\newcommand\PP{\mathbb{P}}
\newcommand\ZZ{\mathbb{Z}}
\newcommand{\CC}{\mathbb{C}}
\newcommand{\cC}{\mathcal{C}}
\newcommand{\cD}{\mathcal{D}}
\newcommand{\Q}{\mathbb{Q}}
\newcommand{\s}{s}
\newcommand{\dd}{\delta}
\DeclareMathOperator{\rk}{Rank}
\DeclareMathOperator{\coker}{coker}
\DeclareMathOperator{\sing}{Sing}
\DeclareMathOperator{\lcm}{lcm}
\DeclareMathOperator{\mult}{mult}
\DeclareMathOperator{\supp}{V}
\DeclareMathOperator{\Jac}{Jac}
\newcommand{\orb}{{\text{\rm orb}}}
\begin{document}

\title[Cremona transformations of weighted projective planes, Zariski pairs, ...]
{Cremona transformations of weighted projective planes, Zariski pairs, and rational cuspidal curves}

\author[E.~Artal]{Enrique Artal Bartolo}
\address{Departamento de Matem\'{a}ticas, IUMA \\
Universidad de Zaragoza \\
C.~Pedro Cerbuna 12, 50009, Zaragoza, Spain}
\urladdr{http://riemann.unizar.es/\~{}artal}
\email{artal@unizar.es}

\author[J.I.~Cogolludo]{Jos{\'e} Ignacio Cogolludo-Agust{\'i}n}
\address{Departamento de Matem\'aticas, IUMA \\ 
Universidad de Za\-ra\-go\-za \\ 
C.~Pedro Cerbuna 12 \\ 
50009 Zaragoza, Spain} 
\urladdr{http://riemann.unizar.es/\~{}jicogo}
\email{jicogo@unizar.es} 

\author[J.~Mart\'{\i}n-Morales]{Jorge Mart\'{\i}n-Morales}
\address{Centro Universitario de la Defensa, IUMA \\
Academia General Militar \\ 
Ctra.~de Huesca s/n. \\ 
50090, Zaragoza, Spain}
\urladdr{http://cud.unizar.es/martin}
\email{jorge@unizar.es}

\thanks{Partially supported by MTM2016-76868-C2-2-P and 
Gobierno de Arag{\'o}n (Grupo de referencia ``{\'A}lgebra y Geometr{\'i}a'')
cofunded by Feder 2014-2020 ``Construyendo Europa desde Arag\'on''.
The third author is also partially supported by FQM-333 ``Junta de Andaluc{\'\i}a''.}
\subjclass[2010]{Primary 14H20, 14H30, 14F45, 14E07; Secondary 57M99, 57M12}
\keywords{Weighted projective planes, homology spheres, Zariski pairs}
\date{\today}

\dedicatory{To Andr{\'a}s N{\'e}methi, source of inspiration in singularity theory}

\begin{abstract}
In this work, we study a family of Cremona transformations of weighted projective planes which
generalize the standard Cremona transformation of the projective plane. Starting from special
plane projective curves we construct families of curves in weighted projective planes with special
properties. We explain how to compute the fundamental groups of their complements, using the
blow-up-down decompositions of the Cremona transformations, we find examples of Zariski pairs
in weighted projective planes (distinguished by the Alexander polynomial). As another application 
of this machinery we study a family of singularities called weighted L\^{e}-Yomdin, which provide
infinite examples of surface singularities with a rational homology sphere link. To end this 
paper we also study a family of surface singularities generalizing Brieskorn-Pham singularities
in a different direction. This family contains infinitely many new examples of integral homology 
sphere links, answering a question by N\'emethi. 
\end{abstract}

\maketitle


\section*{Introduction}
This paper deals with curves in surfaces with normal singularities and the interplay between their topological and algebraic properties.

In this direction we provide a family of examples of curves in weighted projective planes using 
a generalization of the classical Cremona transformations. This allows us to construct infinitely 
many pairs of curves in weighted projective planes defining linearly equivalent divisors and the same 
local type of singularities, whose embeddings are not homeomorphic. Moreover, whose complements have 
non-isomorphic fundamental groups. This is known in the literature as Zariski pairs when referred 
to plane projective curves~\cite{Artal94} since Zariski provided the first example of such a phenomenon
in~\cite{Zariski-irregularity}. The curves are obtained from a smooth cubic and three tangent lines
via a weighted Cremona transformation in~\S\ref{sec:cremona}. These groups are distinguished using two 
different techniques. In~\S\ref{sec:fund-groups} a topological approach is given by obtaining 
presentations of the groups. These presentations, which in general are complicated to calculate, can 
be derived from those of the original curve after Cremona transformations in a very explicit geometric way. 
To complete this example, we also present a more algebraic approach via cyclic coverings as was
originally used by Zariski and later developed by Steenbrink~\cite[Lemma 3.14]{steenbrink-mhs},
Libgober~\cite{Libgober-alexander}, Esnault-Viehweg~\cite{Esnault-Viehweg82}, 
Vaqui\'e~\cite{Vaquie-irregulatity}, and the first author~\cite{Artal94}. 
Our method uses a generalization of~\cite{Esnault-Viehweg82} given
in~\cite{ACM19}, see~\S\ref{sec:EsnaultViehweg}. 
Section~\ref{sec:ratcusp} is devoted to developing some methods to construct rational cuspidal 
curves in weighted projective planes which will be useful in the later sections.

The second part of the paper focuses on local properties of surface singularities. Our main goal
is to provide examples of surface germs whose link is a rational (or even integral) homology sphere.
A source of examples is given by superisolated singularities. In section~\ref{sec:LY} we introduce
the determinant of a surface singularity as the absolute value of the determinant of the intersection 
matrix of a resolution. This invariant of the surface singularity can also be calculated using a
partial resolution, as shown in~\S\ref{sec:determinant}. Note that a surface singularity has a rational 
homology sphere link if and only if the dual graph of a (partial) resolution is a tree whose vertices 
are rational curves. Moreover, a rational homology sphere link is integral if the determinant of the 
singularity is one. We use this criterion to study weighted L\^{e}-Yomdin singularities and to describe 
infinite families with rational and integral homology sphere links.

In particular, following the ideas in~\cite{artal-formeJordan,JM-semistable}, one can use the 
Zariski pairs obtained in section~\ref{sec:zp} to construct weighted L\^{e}-Yomdin singularities 
having the same Alexander polynomials, the same abstract topology, but different embedded topology.
It would be hopeless to compute the Jordan form of the complex monodromy (the actual invariant that distinguishes
the embedded topology) without the use of the techniques in this paper.

The last part is devoted to solving two problems on surface singularities with a rational sphere link.
Namely, in \S\ref{ex:bp} we study Brieskorn-Pham surface singularities 
$\{x^a+y^b+z^c=0\}\subset \mathbb{C}^3$ as a special case of weighted L\^{e}-Yomdin. We illustrate how to recover
classical results in a simple way, namely to characterize which ones have a rational sphere link and show that the only 
integral homology spheres occur in the classical case, that is, whenever $(a,b,c)$ are pairwise coprime.
Besides Brieskorn-Pham singularities, more examples are provided in \S\ref{sec:RHSCremona}
using weighted Cremona transformations and Kummer covers. 

Andr\'as N\'emethi asked us if it was possible to find singularities with integral homology sphere links
in the realm of weighted L\^{e}-Yomdin singularities. The only ones we found are the already known Brieskorn-Pham 
singularities. As an alternative, in \S\ref{sec:newexamples}, a new family of surface singularities is presented 
following~\cite{MVV19}. We give conditions for this family to have a rational homology sphere link.
Moreover, this family provides infinitely many new examples of integral homology sphere links
which may answer the question by Andr\'as N\'emethi in the affirmative.

\section{Quotient singularities and weighted Cremona transformations}
\label{sec:intro}

The main objects of this work will be weighted projective planes (and lines) and quotient singularities. 
A quotient singularity is a normal space which is locally isomorphic to $(X,0)$ where $X$ is the quotient 
of $\CC^n$ by the action of a cyclic group $\mu_m\subset\CC^*$ given by
\[
\zeta\cdot(x_1,\dots,x_n)=(\zeta^{a_1} x_1,\dots,\zeta^{a_n} x_n),\qquad \zeta^m=1,(x_1,\dots,x_n)\in\CC^n.
\]
If $\gcd(m,a_1,\dots,a_n)=1$, the action is faithful. We denote this singularity by
$\frac{1}{m}(a_1,\dots,a_n)$. There are some trivial equivalences of quotient singularities such as 
$\frac{1}{m}(a_1,\dots,a_n)=\frac{1}{m}(d a_1,\dots,d a_n)$ if $\gcd(m,d)=1$. 
A less obvious one is given by
\[
\frac{1}{m}(a_1,\dots,a_n)\cong\frac{d}{m}\left(a_1,\frac{a_2}{d},\dots,\frac{a_n}{d}\right)
\text{ if }d=\gcd(m,a_2,\dots,a_n)
\]
(see~\cite{Dolgachev82} as a general reference on the subject).

\subsection{Curves in quotient surface singularities}
\label{sec:quotient}

We introduce some notation for germs of curves in a quotient surface singularity
$S:=\frac{1}{d}(a,b)$ (with $a,b,d$ pairwise coprime and $d>1$). Let $\pi:\CC^2\to S$ be the quotient
map. Any germ of curve $C\subset S$ is defined as the zero locus of a non-constant equivariant germ
$f\in\CC\{x,y\}$, that is, a germ satisfying $f(\zeta\cdot (x,y))=\zeta^k f(x,y)$ for some $k=0,...,d-1$.
For a fixed $k$, the collection of all such equivariant germs inherits an $\mathcal O_S$-module structure
as a subset of $\CC\{x,y\}$ and will be denoted by $\mathcal O_S(k)$.
Note that an equivariant germ is a function on $S$ only when~$k=0$, that is,~$\mathcal O_S=\mathcal O_S(0)$.

\begin{defn}
A germ of curve~$C$ is said to be \emph{quasi-smooth} if $C$ is smooth as an abstract curve. 
If, in addition, a defining germ for $C$ can be found to have multiplicity one, then $C$ is said to be \emph{extremely quasi-smooth}. 
\end{defn}

\begin{rem}
There are simple characterizations of the above concepts in terms of a minimal resolution
$\hat{S}\to S$; recall that its dual graph is a bamboo whose vertices represent smooth rational divisors.
A curve is quasi-smooth if its strict transform in~$\hat{S}$ is a curvette of an exceptional divisor, that is,
smooth and transversal to it at a smooth point of the exceptional locus. 
Moreover, it is extremely quasi-smooth if this divisor is either end of the bamboo. 
In the particular case $\frac{1}{d}(1,1)$, any quasi-smooth curve is extremely quasi-smooth, and any linear form 
can be the multiplicity-one component of~$f$. Otherwise, in $\frac{1}{d}(a,b)$ with $(a,b)\neq (1,1)$ the equivariant 
part of multiplicity 1 of an extremely quasi-smooth $f$ can only be given by the eigenspaces of the cyclic action, 
in our notation, either $x$ or~$y$.
\end{rem}

\subsection{Weighted projective planes}
\label{sec:WPR}
In this section we briefly describe weighted projective planes in order to fix some notation.
A \emph{weight} is a triple $\omega:=(e_1,e_2,e_3)\in\ZZ_{>0}^3$ such that $\gcd\omega=1$.
The \emph{weighted projective plane} $\PP^2_\omega$ is a normal surface obtained as the quotient
of $\CC^3\setminus\{0\}$ by the action of $\CC^*$ given by
\[
t\cdot(x,y,z)=(t^{e_1} x, t^{e_2} y, t^{e_3} z),\qquad t\in\CC^*, (x,y,z)\in\CC^3\setminus\{0\}.
\]
Weighted projective lines are defined in a similar way.
The symbol $[x:y:z]_\omega$ stands for points in $\PP^2_\omega$, for orbits in 
$\CC^3\setminus\{0\}$ or their closure in $\CC^3$. This variety is covered
by three \emph{quotient charts}. One of them is
\[
\begin{tikzcd}[row sep=0]
\frac{1}{e_3}(e_1,e_2)\ar[r,"\Psi_{\omega,3}"]&\PP^2_\omega\setminus\{z=0\}\\
{[(x,y)]}\ar[r,mapsto]&{[x:y:1]}_\omega .
\end{tikzcd}
\]
The other two quotient charts are defined accordingly.

Define $d_k:=\gcd(e_i,e_j)$ and $\alpha_k:=\frac{e_k}{d_id_j}$, $\{i,j,k\}=\{1,2,3\}$. 
Note that $\eta:=(\alpha_1,\alpha_2,\alpha_3)$ are pairwise coprime. According to the 
properties described above, the map 
\begin{equation}
\label{eq:P2isom}
\begin{tikzcd}[row sep = 0ex
     ,/tikz/column 1/.append style={anchor=base east}
     ,/tikz/column 2/.append style={anchor=base west}]
\PP^2_\omega\arrow[r,"\pi_{\eta,\omega}"]&\PP^2_\eta\\
{[x:y:z]}_\omega\arrow[r,mapsto]&{[x^{d_1}:y^{d_2}:z^{d_3}]}_\eta
\end{tikzcd}
\end{equation}
is well defined since 
\[
t\cdot\!{[x\!:\!y\!:\!z]}_\omega\!=\![t^{e_1}\!x\!:\!t^{e_2}\!y\!:\!t^{e_3}\!z]_\omega\!\mapsto\!
[t^{d_1 e_1}\!x^{d_1}\!:\!t^{d_2 e_2}\!y^{d_2}\!:\!t^{d_3 e_3}\!z^{d_3}]_\eta\!=\!t^{d_1 d_2 d_3}\cdot\!{[x^{d_1}\!:\!y^{d_2}\!:\!z^{d_3}]}_\eta,
\]
and $d_i e_i=\alpha_i d_1 d_2 d_3$.
Moreover, one can easily check that it is an isomorphism.

One may consider $\PP^2_\omega$ and $\PP^2_\eta$ in a slightly different way (see also~\cite{Dolgachev82}).
The plane $\PP^2_\eta$ has at most 3 singular points at $P_x:=[1:0:0]_\eta$ (if $\alpha_1>1$), $P_y:=[0:1:0]_\eta$ 
(if $\alpha_2>1$), and $P_z:=[0:0:1]_\eta$ (if $\alpha_3>1$). The plane $\PP^2_\omega$ is 
an orbifold where the quotient charts $\Psi_{\omega,i}$ are not normalized; the associated analytic variety to $\PP^2_\omega$ is 
$\PP^2_\eta$ since the normalization of the source of $\Psi_{\omega,i}$ is precisely the source of~$\Psi_{\eta,i}$.

\subsection{Weighted blow-ups}\label{sec:wblowups}

Let us consider now $\omega:=(e_1,e_2)\in\ZZ^2_{>0}$, $\gcd\omega=1$.
The $\omega$-\emph{weighted blow-up} of~$\CC^2$ at the origin is the map
$\pi_\omega:\widehat{\CC}^2_\omega\to\CC^2$ where
\[
\widehat{\CC}^2_\omega:=\{(\mathbf{x},\mathbf{u})\in\CC^2\times\PP^1_\omega\mid \mathbf{x}\in\mathbf{u}\}.
\]
This normal variety is represented with two quotient charts. One of them is
\[
\widehat{\Psi}_{\omega,2}:\frac{1}{e_2}(e_1,-1)\to\widehat{\CC}^2_\omega,\qquad
(x,y)\mapsto((xy^{e_1},y^{e_2}),[x:1]_\omega);
\]
the other one is analogous and modeled on $\frac{1}{e_1}(-1,e_2)$. The exceptional 
divisor of $\pi_\omega$ is a weighted projective line which contains the singular
points $(0,[1:0]_\omega)$ (if $e_1>1$) and $(0,[0:1]_\omega)$ (if $e_2>1$)
of the surface~$\widehat{\CC}^2_\omega$. Note that the curvettes of this divisor
are extremely quasi-smooth if either $e_1$ or $e_2$ equal~$1$.

Let us study now $3$-dimensional weighted blow-ups. We recover the notation introduced in
\S\ref{sec:WPR}
for a weight $\omega$ and its normalization $\eta$, both in $\mathbb{Z}^3_{>0}$. 
We consider $\Pi_\omega:\widehat{\CC}^3_\omega\to\CC^3$ where
\[
\widehat{\CC}^3_\omega:=\{(\mathbf{x},\mathbf{u})\in\CC^3\times\PP^2_\omega\mid \mathbf{x}\in\mathbf{u}\}.
\]
The normal variety is now represented with three charts. One of them is
\[
\widehat{\Psi}_{\omega,3}:\frac{1}{e_3}(e_1,e_2,-1)\to\widehat{\CC}^3_\omega,\qquad
(x,y,z)\mapsto((xz^{e_1},yz^{e_2},z^{e_3}),[x:y:1]_\omega);
\]
the other two charts can analogously be defined and have as domains the quotients 
$\frac{1}{e_1}(-1,e_2,e_3)$ and $\frac{1}{e_2}(e_1,-1,e_3)$.

Let us study the local structure of $\widehat{\CC}^3_\omega$ at $E_\omega:=\Pi_\omega^{-1}(0)$; since
$\Pi_\omega$ is an isomorphism outside this exceptional divisor the points not in $E_\omega$ 
are smooth. Note that $E_\omega$ is naturally isomorphic to $\PP^2_\omega$; in addition
by \eqref{eq:P2isom}, one has $\PP^2_\omega\cong\PP^2_\eta$. 
For the sake of simplicity
we will denote the elements of $E_\omega$ only by their $\omega$-quasi-homogeneous coordinates.
Let us denote: 
\begin{gather*}
P_x=[1:0:0]_\omega,\quad P_y=[0:1:0]_\omega,\quad P_z=[0:0:1]_\omega,\\
\check{X}=\{[0:y:z]_\omega\mid y z\neq 0\},\
\check{Y}=\{[x:0:z]_\omega\mid x z\neq 0\},\
\check{Z}=\{[x:y:0]_\omega\mid x y\neq 0\}.
\end{gather*}

In order to provide a  stratification of $E_\omega$ in terms of the singular points of the ambient space we need
a description of the singular locus.

\begin{prop}\label{prop:singular_locus}
Let $P=[x_0:y_0:1]_\omega\in E_\omega\cap\widehat{\Psi}_{\omega,3}(\CC^3)\subset\widehat{\CC}^3_\omega$. The following properties hold:
\begin{enumerate}[label=\rm(\arabic{enumi})]
 \item 
 If $x_0y_0\neq0$ then $(\widehat{\CC}^3_\omega,P)$ is smooth.
 \item\label{prop:singular_locus2} 
 If $P\in\check{X}$, i.e. $y_0\neq0$ and $x_0= 0$, then $(\widehat{\CC}^3_\omega,P)$ is isomorphic
 to the germ at the origin of $\frac{1}{d_1}(e_1,0,-1)$.
 \item\label{prop:singular_locus3} 
 If $P\in\check{Y}$, i.e. $x_0\neq0$ and $y_0= 0$, then $(\widehat{\CC}^3_\omega,P)$ is isomorphic
 to the germ at the origin of $\frac{1}{d_2}(0,e_2,-1)$.
 \item If $P=P_z$, i.e. $x_0=y_0= 0$, then $(\widehat{\CC}^3_\omega,P_z)$ is isomorphic
 to the germ at the origin of~$\frac{1}{e_3}(e_1,e_2,-1)$.
\end{enumerate}
\end{prop}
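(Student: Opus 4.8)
The plan is to carry out the entire computation inside the single quotient chart $\widehat{\Psi}_{\omega,3}$, reducing the local type at each of the four kinds of points to the linear action of a stabilizer subgroup on a tangent space.

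First I would record the structural fact behind the chart: $\widehat{\Psi}_{\omega,3}$ realizes the quotient $\frac{1}{e_3}(e_1,e_2,-1)$ — that is, $\CC^3$ with coordinates $(x,y,z)$ modulo the linear action $\zeta\cdot(x,y,z)=(\zeta^{e_1}x,\zeta^{e_2}y,\zeta^{-1}z)$ for $\zeta^{e_3}=1$ — as the open subset of $\widehat{\CC}^3_\omega$ lying over the affine chart $\{z\neq 0\}\subset\PP^2_\omega$. Under this identification $E_\omega$ becomes $\{z=0\}$ and the point $P=[x_0:y_0:1]_\omega$ becomes the image $\bar p$ of $p:=(x_0,y_0,0)\in\CC^3$ (this last point uses that $[x:y:1]_\omega=[\zeta^{e_1}x:\zeta^{e_2}y:1]_\omega$ for $\zeta^{e_3}=1$). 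So it suffices to identify the germ $(\CC^3/\mu_{e_3},\bar p)$ in each case.

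Next I would invoke the standard local structure of a quotient by a finite group acting linearly: choosing a $G_p$-invariant neighborhood $U$ of $p$ with $gU\cap U=\emptyset$ for $g\notin G_p$, one has $(\CC^3/\mu_{e_3},\bar p)\cong(U/G_p,\bar p)=(\CC^3/G_p,\bar p)$, where $G_p\subset\mu_{e_3}$ is the stabilizer of $p$. Because the $\mu_{e_3}$-action is linear and every element of $G_p$ fixes $p$, translating the origin to $p$ keeps the $G_p$-action linear and diagonal, given by the same matrix $\operatorname{diag}(\zeta^{e_1},\zeta^{e_2},\zeta^{-1})$ restricted to $\zeta\in G_p$. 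Hence $(\widehat{\CC}^3_\omega,P)$ is the germ at the origin of $\CC^3/G_p$ with $G_p$ acting through those weights, and the proposition becomes a matter of computing $G_p$ and reading off the weights.

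Finally the four cases are pure $\gcd$ bookkeeping, using $\gcd(e_1,e_2,e_3)=1$. If $x_0y_0\neq 0$, then $G_p=\{\zeta:\zeta^{e_1}=\zeta^{e_2}=1\}=\{1\}$, so the germ is smooth, giving (1). If $x_0=0\neq y_0$, then $G_p=\{\zeta\in\mu_{e_3}:\zeta^{e_2}=1\}=\mu_{d_1}$ with $d_1=\gcd(e_2,e_3)$; since $d_1\mid e_2$ the weight on the $y$-coordinate collapses to $0$, and the germ is $\frac1{d_1}(e_1,0,-1)$. The case $x_0\neq 0=y_0$ is symmetric and yields $\frac1{d_2}(0,e_2,-1)$. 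If $p=P_z$, then $G_p=\mu_{e_3}$ acting by $(e_1,e_2,-1)$, which is the chart domain itself, so that case is tautological. I expect no serious obstacle: the only steps that need genuine care are the two structural facts used above — that $\widehat{\Psi}_{\omega,3}$ is an open immersion onto the part of $\widehat{\CC}^3_\omega$ over $\{z\neq 0\}$, which can be verified directly from the defining equations of $\widehat{\CC}^3_\omega$, and the reduction of a finite quotient germ to the stabilizer acting on the tangent space, which is standard.
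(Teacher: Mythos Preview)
Your proposal is correct and follows essentially the same approach as the paper: work in the chart $\widehat{\Psi}_{\omega,3}$, identify the germ at $P$ with the quotient of $\CC^3$ by the isotropy subgroup of the preimage point, and compute that isotropy as a $\gcd$. The paper's proof is in fact briefer---it only writes out case~(2) explicitly---whereas you spell out all four cases and flag the two structural inputs (chart being an open immersion, reduction to the stabilizer) that the paper takes for granted.
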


\begin{proof}
It is only necessary to prove~\ref{prop:singular_locus2}. 
Note that $P$ is obtained as the image by $\widehat{\Psi}_{\omega,3}$ of $(0,y_0,0)\in\frac{1}{e_3}(e_1,e_2,-1)$. The isotropy
subgroup of $(0,y_0,0)$ by the action is the cyclic group of order $d_1=\gcd(e_2,e_3)$. 
Hence at a neighborhood of $(0,y_0,0)$ the space looks like $\frac{1}{d_1}(e_1,e_2,-1)=\frac{1}{d_1}(e_1,0,-1)$.
\end{proof}

\begin{rem}
A similar statement holds for the other charts. Note that a point satisfying property~\ref{prop:singular_locus2}
above, say $P=[0:1:1]_\omega$ 
belongs in the image of $\widehat{\Psi}_{\omega,3}$, $P=\widehat{\Psi}_{\omega,3}(0,1,0)$ as stated in 
Proposition~\ref{prop:singular_locus}, but also in the image of $\widehat{\Psi}_{\omega,2}$, 
$P=\widehat{\Psi}_{\omega,2}(0,0,1)$. Note that the notation for the quotient types given above do 
no match, that is, $\frac{1}{d_1}(e_1,0,-1)$ if considered in $\widehat{\Psi}_{\omega,3}(\CC^3)$ and 
$\frac{1}{d_1}(e_1,-1,0)$ if considered in $\widehat{\Psi}_{\omega,3}(\CC^3)$.
To avoid this ambiguity we will simply say that given $P\in\check{X}$, then $(\widehat{\CC}^3_\omega,P)$ is isomorphic
to the product of $(\CC,0)$ and the germ at the origin of $\frac{1}{d_1}(e_1,-1)$.
A similar property holds for $\check{Y},\check{Z}$.
\end{rem}

\begin{rem}
If $d_1=e_3$, i.e., if $e_3$ divides $e_2$, in the proof of Proposition~\ref{prop:singular_locus} 
the condition $y_0\neq 0$ is not needed and $P_z$ behaves as the points in $\check{X}$. 
A similar property holds for the other pairs of axes and vertices.
\end{rem}

\begin{notation}\label{ntc:strata}
We fix the following notation for the strata of $E_\omega$.
\begin{itemize}
\item 
2-dimensional stratum.
The stratum $\mathcal{T}$ is the intersection of $E_\omega$ with the smooth subvariety of $\widehat{\CC}^3_\omega$;
it contains $\{[x:y:z]_\omega\mid xyz\neq 0\}$. It contains also
$\check{X}$ (resp.~$\check{Y}$, resp.~$\check{Z}$) if $d_1=1$ (resp.~$d_2=1$, resp.~$d_3=1$) and 
$P_x$ (resp.~$P_y$, resp.~$P_z$) if $e_1=1$ (resp.~$e_2=1$, resp.~$e_3=1$).

\item 
1-dimensional strata.  Following Proposition~\ref{prop:singular_locus} and the above remarks we set:
$$\mathcal{L}_x=
\begin{cases} 
\emptyset & \text{ if } d_1=1\\ 
\check{X}\cup \{P_y\} & \text{ if } 1<d_1=e_2\neq e_3\\
\check{X}\cup \{P_z\} & \text{ if } 1<d_1=e_3\neq e_2\\
X & \text{ if } 1<d_1=e_2=e_3\\
\check{X} & \text{ otherwise. }
\end{cases}
$$
The remaining strata $\mathcal{L}_y$ and $\mathcal{L}_z$ are defined accordingly.

\item 
0-dimensional strata. 
$$
\mathcal{P}_x=\begin{cases} \emptyset & \text{ if } e_1\text{ divides either } e_2\text{ or }e_3,\\ \{P_x\} & \text{ otherwise.}\end{cases}
$$
The remaining strata $\mathcal{P}_y$
and $\mathcal{P}_z$ are defined accordingly.

\end{itemize}
\end{notation}

\subsection{Weighted Cremona transformations}\label{sec:cremona}

The most well-known Cremona transformation of $\PP^2$ corresponds to
the birational map $[x:y:z]\mapsto[yz:xz:xy]$; geometrically, this map
is the composition of the blow-ups at $[1:0:0],[0:1:0],[0:0:1]$
and the contractions of the strict transforms of the lines 
$x=0,y=0,z=0$ which become pairwise disjoint $(-1)$-lines in the blown-up
plane.

In this section we generalize this transformation to a birational map from a 
weighted projective plane to~$\PP^2$.
Let us fix $\PP^2_\omega$, $\omega:=(e_1,e_2,e_3)$, where 
$e_1,e_2,e_3$ are pairwise coprime, i.e., $\omega=\eta$. In order to stress this property we will use the notation
$e_i=\alpha_i$, $i=1,2,3$. 
Consider two positive integers $\beta_1,\beta_2$ such that $\alpha_1\beta_1+\alpha_2\beta_2=\alpha_3 + \alpha_1 \alpha_2$ (they exist from
standard semigroup properties). These arithmetic data provide the following map 
\[
\begin{tikzcd}[ /tikz/column 1/.append style={anchor=base east},/tikz/column 3/.append style={anchor=base west}, row sep=0pt]
\mathbb{P}^2_\omega\arrow[rr,dashed,"\Phi_{\omega,\beta_1,\beta_2}"]&&\mathbb{P}^2\\
{[x:y:z]}_\omega\arrow[rr,mapsto]&&{[y^{\alpha_1}  z: x^{\alpha_2}  z: x^{\beta_1} y^{\beta_2}]},
\end{tikzcd}
\]
which is a well-defined rational map (not a morphism) since the three coordinates have $\omega$-degree equal to $\alpha_1 \alpha_2+\alpha_3 $.
It is in fact a birational map whose inverse is given by
\[
\begin{tikzcd}[ /tikz/column 1/.append style={anchor=base east},/tikz/column 2/.append style={anchor=base west}, row sep=0pt]
\mathbb{P}^2\arrow[r,dashed]&\mathbb{P}^2_\omega\\
{[x:y:z]}\arrow[r,mapsto]&{\left[y^{\frac{1}{\alpha_2}} z^{\frac{\alpha_1}{\alpha_3}}: x^{\frac{1}{\alpha_1}} z^{\frac{\alpha_2}{\alpha_3}}: x^{\frac{\beta_2}{\alpha_1}} y^{\frac{\beta_1}{\alpha_2}}\right]}_\omega.
\end{tikzcd} 
\]
We will show that this map is well defined as long as the radicals
$x^{\frac{1}{\alpha_1}},y^{\frac{1}{\alpha_2}},z^{\frac{1}{\alpha_3}}$ are chosen consistently throughout the formula. 
Assume $x_0$ (resp.~$y_0$, $z_0$) is such that $x_0^{\alpha_1}=x$ (resp.~$y_0^{\alpha_2}=y$, $z_0^{\alpha_3}=z$) 
and choose for instance $x_1=\zeta_{\alpha_1} x_0$. Let $\hat{\alpha}_2\in\mathbb{Z}$ be such that $\alpha_2\hat{\alpha}_2\equiv 1\bmod{\alpha_1}$. As a consequence, the following congruences hold: 
$\alpha_3\hat{\alpha}_2\equiv(\alpha_3 + \alpha_1 \alpha_2)\hat{\alpha}_2\equiv(\alpha_1\beta_1+\alpha_2\beta_2)\hat{\alpha}_2\equiv\beta_2\bmod{\alpha_1}$. Then 
\begin{gather*}
[y_0z_0^{\alpha_1}\!:\!x_1z_0^{\alpha_2}\!:\!x_1^{\beta_2}y_0^{\beta_1}]_\omega\!=\!
[y_0z_0^{\alpha_1}:\zeta_{\alpha_1}x_0z_0^{\alpha_2}:\zeta_{\alpha_1}^{\beta_2}x_0^{\beta_2}y_0^{\beta_1}]_\omega\!=\\
[(\zeta_{\alpha_1}^{\hat{\alpha}_2})^{\alpha_1}y_0z_0^{\alpha_1}:(\zeta_{\alpha_1}^{\hat{\alpha}_2})^{\alpha_2}x_0z_0^{\alpha_2}:(\zeta_{\alpha_1}^{\hat{\alpha}_2})^{\alpha_3}x_0^{\beta_2}y_0^{\beta_1}]_\omega\!=
[y_0z_0^{\alpha_1}:x_0z_0^{\alpha_2}:x_0^{\beta_2}y_0^{\beta_1}]_\omega.
\end{gather*}
A similar argument applies to other choices of roots of $y^{\frac{1}{\alpha_2}}$ and $z^{\frac{1}{\alpha_3}}$. 
These equations completely determine the birational map, but a more geometric description will be useful.

\begin{prop}
The map  $\Phi_{\omega,\beta_1,\beta_2}$ is the composition of the following blow-ups and downs:
\begin{enumerate}[label=\rm(\arabic{enumi})]
 \item Three simultaneous blow-ups:
 \begin{enumerate}[label=\rm(\alph{enumii})]
  \item Type $(\alpha_1,\alpha_2)$ at $[0:0:1]_\omega\cong\frac{1}{\alpha_3}(\alpha_1,\alpha_2)$.
  \item Type $(1,\beta_1)$ at $[0:1:0]_\omega$ isomorphic to 
  \[
\frac{1}{\alpha_2}(\alpha_1,\alpha_3)\!=\!\frac{1}{\alpha_2}(\alpha_1,\alpha_1 \alpha_2+\alpha_3 )\!=\!\frac{1}{\alpha_2}(\alpha_1,\alpha_1\beta_1+\alpha_2\beta_2)\!=\!\frac{1}{\alpha_2}(1,\beta_1).
  \]
\item Type $(1,\beta_2)$ at $[1:0:0]_\omega$ isomorphic to 
  \[\frac{1}{\alpha_1}(\alpha_2,\alpha_3)\!=\!\frac{1}{\alpha_1}(\alpha_2,\alpha_1 \alpha_2+\alpha_3 )\!=\!\frac{1}{\alpha_1}(\alpha_2,\alpha_1\beta_1+\alpha_2 \beta_2)\!=\!\frac{1}{\alpha_1}(1,\beta_2).
  \]
 \end{enumerate}
 \item Three simultaneous blow-downs:
\begin{enumerate}[label=\rm(\alph{enumii})]
\item Type $(1,1)$ at $[0:0:1]$.
\item Type $(\alpha_2,\beta_1)$ at $[1:0:0]$.
\item Type $(\alpha_1,\beta_2)$ at $[0:1:0]$.
  \end{enumerate}
\end{enumerate}

\end{prop}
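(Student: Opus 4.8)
The plan is to imitate the classical argument for $\PP^2$ in the weighted setting, doing every verification in the quotient charts of the weighted blow-ups of \S\ref{sec:wblowups} (and their two-dimensional analogues). Write $\Phi:=\Phi_{\omega,\beta_1,\beta_2}$. I would begin by reducing the whole statement to a computation on the open torus. Since $\Phi$ is birational with the inverse written above (the hypothesis $\alpha_1\beta_1+\alpha_2\beta_2=\alpha_3+\alpha_1\alpha_2$ being exactly what makes the three defining monomials transform by the same power of $t$, so that the formula descends), and since a birational map between normal varieties is determined by its restriction to any dense open subset, it suffices to show that the composition of the blow-ups and blow-downs in (1)--(2) is a birational map $\PP^2_\omega\dashrightarrow\PP^2$ whose restriction to $\{xyz\neq 0\}_\omega$ is $[x:y:z]_\omega\mapsto[y^{\alpha_1}z:x^{\alpha_2}z:x^{\beta_1}y^{\beta_2}]$. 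One also records here that the common zero locus of $y^{\alpha_1}z$, $x^{\alpha_2}z$, $x^{\beta_1}y^{\beta_2}$ is exactly $\{P_x,P_y,P_z\}$, so this is the indeterminacy locus of $\Phi$, and that $\Phi$ restricts to an isomorphism between the open tori (a determinant check on the exponent matrix with rows $(0,\alpha_1,1)$, $(\alpha_2,0,1)$, $(\beta_1,\beta_2,0)$).

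Next I would carry out the three blow-ups of (1). Their local quotient types are as stated, which is precisely the content of the displayed chains of equalities --- each a $\gcd$-reduction of a quotient singularity together with the arithmetic identity. For each of (1a)--(1c) I would write out the chart maps of the weighted blow-up composed with $\Psi_{\omega,i}$ and pull back the linear system generated by $y^{\alpha_1}z$, $x^{\alpha_2}z$, $x^{\beta_1}y^{\beta_2}$, checking that on the resulting surface $X$ the exceptional divisors $E_x,E_y,E_z$ over $P_x,P_y,P_z$ split off as the fixed part and the moving part is base-point free, so that $\Phi$ lifts to a morphism $\tilde\Phi\colon X\to\PP^2$; the same computation gives $\tilde\Phi(E_x)=\{x=0\}$, $\tilde\Phi(E_y)=\{y=0\}$, $\tilde\Phi(E_z)=\{z=0\}$, each isomorphically. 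By the previous paragraph $\tilde\Phi$ is birational, and on $X$ the divisors $E_x,E_y,E_z$ together with the strict transforms $\widetilde L_x,\widetilde L_y,\widetilde L_z$ of the coordinate lines $\{x=0\},\{y=0\},\{z=0\}$ form a hexagon in which the $E$'s and the $\widetilde L$'s alternate, with $\widetilde L_x$ meeting $E_y$ and $E_z$, $\widetilde L_y$ meeting $E_x$ and $E_z$, $\widetilde L_z$ meeting $E_x$ and $E_y$ (each $\widetilde L$ passing through exactly two of the blown-up points, and separated from the other curves). Hence the exceptional locus of $\tilde\Phi$ is $\widetilde L_x\cup\widetilde L_y\cup\widetilde L_z$, and $\tilde\Phi$ collapses $\widetilde L_z$ to $\{x=0\}\cap\{y=0\}=[0:0:1]$, $\widetilde L_x$ to $\{y=0\}\cap\{z=0\}=[1:0:0]$, and $\widetilde L_y$ to $\{x=0\}\cap\{z=0\}=[0:1:0]$, matching the three points of (2).

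It remains to identify the neighborhoods of $\widetilde L_z,\widetilde L_x,\widetilde L_y$ in $X$ as exactly those undone by the weighted blow-downs of types $(1,1)$, $(\alpha_2,\beta_1)$, $(\alpha_1,\beta_2)$. Here I would use Proposition~\ref{prop:singular_locus} (in its two-dimensional form) to locate the quotient points of $X$ on the exceptional divisors, and a chart computation to read off the local picture near each $\widetilde L$: because the blow-ups at $P_x$ and $P_y$ each have a weight equal to $1$, the curve $\widetilde L_z$ and a neighborhood of it lie in the smooth locus, so $\widetilde L_z$ is an ordinary $(-1)$-curve contracted to a smooth point by a type-$(1,1)$ blow-down; whereas $\widetilde L_x$ (resp.\ $\widetilde L_y$), together with the single quotient point of $X$ on it, is precisely the exceptional configuration of an $(\alpha_2,\beta_1)$- (resp.\ $(\alpha_1,\beta_2)$-) weighted blow-up of a smooth point --- the arithmetic identity and the quotient-type identities recorded in the statement being exactly what forces these weights. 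Contracting all three curves yields a normal surface $X'$ that is finite and birational over the smooth surface $\PP^2$, hence isomorphic to it, and $\tilde\Phi$ factors as $X\to X'\cong\PP^2$. The resulting birational map $\PP^2_\omega\dashrightarrow\PP^2$ restricts on $\{xyz\neq 0\}_\omega$ to $[x:y:z]_\omega\mapsto[y^{\alpha_1}z:x^{\alpha_2}z:x^{\beta_1}y^{\beta_2}]$, so it coincides with $\Phi$.

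The bulk of the work, and the main obstacle, is the last step: the bookkeeping of the quotient singularities created by the three weighted blow-ups of the already-singular points $P_x,P_y,P_z$, and the verification that each $\widetilde L$ together with its neighborhood is the exceptional configuration of the prescribed weighted blow-down. Everything else is formal once the open-torus reduction is in place. As an alternative to the chart computations, one could phrase the whole sequence as star subdivisions of the fan of the toric surface $\PP^2_\omega$ (and their inverses), in which case the proposition becomes the combinatorial statement that these subdivisions realize the monomial map with the exponent matrix above.
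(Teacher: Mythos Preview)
Your proposal is correct and follows essentially the same architecture as the paper: perform the three weighted blow-ups, obtain the hexagonal configuration of exceptional divisors $E_x,E_y,E_z$ and strict transforms $\widetilde L_x,\widetilde L_y,\widetilde L_z$, and then recognize each $\widetilde L$ (with its quotient points and self-intersection) as the exceptional curve of the prescribed weighted blow-down. The paper's version is terser---it computes the self-intersections and quotient types in Figure~\ref{fig:cremona} by citing \cite[Theorem~4.3]{AMO-Intersection} rather than working through charts, and it does not explicitly lift $\Phi$ to a morphism on $X$---but the decomposition and the key identifications are the same.
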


\begin{proof}
Let us start with the three blow-ups in $\PP^2_\omega$. We obtain a normal rational surface~$S$. The preimage of
the three axes appear in 
Figure~\ref{fig:cremona}, containing the strict transforms $L_x,L_y,L_z$ of the lines and the exceptional components $E_x,E_y,E_z$.
The self-intersections and the type of the singular points are  computed using~\cite[Theorem 4.3]{AMO-Intersection}.

\begin{figure}[ht]
\begin{center}
\begin{tikzpicture}[scale=.89,vertice/.style={draw,circle,fill,minimum size=0.2cm,inner sep=0}]
\tikzset{%
  suma/.style args={#1 and #2}{to path={%
 ($(\tikztostart)!-#1!(\tikztotarget)$)--($(\tikztotarget)!-#2!(\tikztostart)$)%
  \tikztonodes}}
}

\begin{scope}[xshift=-4cm,yscale=-1,yshift=-1cm]
\coordinate (X) at (0,2);
\coordinate (Z) at (-1,-1);
\coordinate (Y) at (2.5,-1);
\coordinate (Y1) at (3.55,.4);
\coordinate (X1) at (-2.35,.5);
\coordinate (XZ) at (-1,2);
\coordinate (XY) at (2.5,2);
\draw[suma=.2 and .2,line width=.5] (Z)node[above left] {$\frac{1}{\alpha_1}(\beta_2,-1)$} to node[pos=.5,above] {$-\frac{\alpha_3}{\alpha_1 \alpha_2}$} node[below] {$E_z$}(Y) node[above] {\ $\frac{1}{\alpha_2}(\beta_1,-1)$} ;
\draw[suma=.35 and .35,line width=.5] (XZ) to node[pos=.6,right] {$-\frac{\alpha_1}{\beta_2}$} node[below left=-1pt] {$E_x$} (X1) ;
\draw[suma=.35 and .35,line width=1.2] (Z) to node[right,pos=.5] {$-\frac{1}{\alpha_1\beta_2}$} node[pos=.82,left] {$\frac{1}{\beta_2}(\alpha_1,-1)$} node[pos=.5,left] {$L_y$}(X1) ;
\draw[suma=.35 and .35,line width=1.2] (Y) to node[left,pos=.5] {$-\frac{1}{\alpha_2\beta_1}$} node[pos=.81,right=2pt] {$\frac{1}{\beta_1}(\alpha_2,-1)$} node[pos=.5,right] {$L_x$}(Y1);
\draw[suma=.35 and .35,line width=.5] (XY) to node[left,pos=.45] {$-\frac{\alpha_2}{\beta_1}$} node[right,pos=.45] {$E_y$} (Y1);
\draw[suma=.2 and .2,line width=1.2] (XZ) to node[above, pos=.5]  {$-1$} node[below] {$L_z$}(XY);
\node[vertice] at (Y) {} ;
\node[vertice] at (Z) {} ;
\node[vertice] at (X1) {};
\node[vertice] at (Y1) {};
\node[vertice] at (XZ) {};
\node[vertice] at (XY) {};

\end{scope}

\end{tikzpicture}
\caption{Weighted blow-ups of $\PP^2$ in $S$}
\label{fig:cremona}
\end{center}
\end{figure}
The strict transforms of the lines coincide with the exceptional components of a 
$(\alpha_1,\beta_2)$-blowing-up ($L_y$), a $(\alpha_2,\beta_1)$-blowing-up ($L_x$) and a 
standard blowing-up ($L_z$). The result of the triple blowing-down is $\PP^2$.
\end{proof}

This geometric expression will be useful for the study of curves in $\PP^2_\omega$ via their transforms in $\PP^2$.

\section{Zariski pairs on weighted projective planes}\label{sec:zp}

In this section, we are going to use the Cremona transformations in \S\ref{sec:cremona} to produce
Zariski pairs in weighted projective planes. By a \emph{Zariski pair} we mean two curves embedded in the 
same surface whose \emph{combinatorics} are the same, but whose embeddings are non-homeomorphic. 
As in the classical case of curves in the projective plane, the combinatorics of a curve in a weighted 
projective plane is encoded by the degrees of its irreducible components and the dual graph of a minimal 
resolution of the curve (where the strict transforms of the irreducible components of the curve are marked).

In this section we will produce families of Zariski pairs of irreducible curves.
Let us start with the combinatorics defined by a smooth projective cubic and three tangent lines at inflection points.
Note that a generic choice of a smooth cubic can be made so that such lines are non-concurrent and hence the 
remaining singular points are three nodes. 
This combinatorics admits a Zariski pair of sextics, see~\cite{Artal94}, and their embeddings are distinguished by the 
algebraic property of whether or not the inflection points of the cubic, that is, the three non-nodal 
singular points of the sextic, which have type $\mathbb{A}_6$, are aligned.
The image by a standard Cremona transformation of the smooth cubics (using the three tangent lines at the axes)
produces a Zariski pair of irreducible sextics with three $\mathbb{E}_6$-points. In this case, the embeddings 
can be proven to be different showing that the fundamental group of their complements are not isomorphic.

Our strategy is to replace this Cremona transformation by the inverse of those described in~\S\ref{sec:cremona}.

\subsection{Fundamental groups of complements}
\label{sec:fund-groups}
Let us start by recalling the two possible fundamental groups of the complements of the sextic curves given as the 
union of a smooth cubic and three tangent lines at inflection points.

\begin{prop}[\cite{ac:98}]
\label{prop:pi1cubic1}
Let  $\mathcal{C}$ be a smooth cubic with three tangent lines $X,Y,Z$ at inflections which are \emph{not} aligned. 
Then, $\pi_1(\PP^2\setminus(\mathcal{C}\cup X\cup Y \cup Z))$ is abelian.
\end{prop}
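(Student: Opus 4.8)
The plan is to compute the fundamental group via the classical Zariski--van Kampen method applied to a pencil of lines, using the special geometry of the configuration of a smooth cubic $\mathcal{C}$ together with three inflectional tangents $X$, $Y$, $Z$. First I would recall the well-known braid monodromy presentation of $\pi_1(\PP^2\setminus\mathcal{C})$ for a smooth cubic: with respect to a generic pencil of lines it is generated by three meridians $g_1,g_2,g_3$ of the cubic subject to the relations coming from the discriminant, and it is classically known (Zariski) that $\pi_1(\PP^2\setminus\mathcal{C})\cong\ZZ/3\ZZ\ast\ZZ/2\ZZ$ (the $(2,3)$ torus knot group modulo its center in an appropriate normalization), in particular it is a non-abelian group of order-structure reflecting $\mathrm{SL}_2(\ZZ/3)$-type behaviour; but the key point we really need is that \emph{adding} the three tangent lines kills the non-abelian part. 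So the real content is the effect of the three lines $X$, $Y$, $Z$.

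The key steps, in order: (1) Choose coordinates so that $X=\{x=0\}$, $Y=\{y=0\}$, $Z=\{z=0\}$ are the three inflectional tangents, and the cubic is in Hesse-type form adapted to these inflection points; pick a generic point $P$ off the configuration and take the pencil of lines through $P$. (2) Identify the singular fibers: the three $\mathbb{A}_6$ tangency points (an inflectional tangent meets $\mathcal{C}$ with multiplicity $3$), the three nodes where the lines meet pairwise (these are the ``three nodes'' mentioned in the excerpt when the lines are non-concurrent), and the finitely many ordinary vertical tangencies of $\mathcal{C}$ itself. (3) Write down the local braid monodromy contribution at each singular fiber: at each $\mathbb{A}_6$ point one gets a relation forcing a meridian of $\mathcal{C}$ to be conjugate to a fixed power of the meridian $\mu_X$ (resp. $\mu_Y$, $\mu_Z$) of the relevant line; at each node between two lines one gets a commutation relation between $\mu_X,\mu_Y,\mu_Z$; at the tangencies of $\mathcal{C}$ one gets the usual cubic relations. (4) Combine all relations and abelianize-by-hand: the $\mathbb{A}_6$ relations express each cubic meridian in terms of a single line meridian, collapsing $g_1,g_2,g_3$ onto (conjugates of) $\mu_X,\mu_Y,\mu_Z$; the node relations make the three line meridians commute; and then the ``not aligned'' hypothesis is exactly what is needed to close up the global monodromy so that the remaining relations force $\mu_X\mu_Y\mu_Z$ to be central and the whole group to collapse to $\ZZ^2$ (or a quotient thereof), i.e. abelian.

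The main obstacle, and where the non-alignment hypothesis does its work, is step (4): controlling the \emph{global} conjugating braids that relate the local pictures over different singular fibers. When the three inflection points are collinear there is an extra coincidence (the line through them produces an additional tangency/alignment constraint) which prevents the collapse and leaves a non-abelian quotient; when they are not collinear, the three $\mathbb{A}_6$ relations are ``independent enough'' that, together with the node commutations, they trivialize the commutator subgroup. Rather than grind through the braid computation from scratch, I would instead derive the presentation from that of the sextic $\mathcal{C}\cup X\cup Y\cup Z$ already available in \cite{ac:98,Artal94}: take the known presentation of $\pi_1$ of the classical Zariski sextic (cubic plus three inflectional tangents in the non-aligned case) and simply verify that the given relations imply commutativity, which reduces the whole proposition to a finite, mechanical check on a finite presentation. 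This is the route I expect to be cleanest, and it is presumably what the cited reference \cite{ac:98} records; the only genuinely delicate point is making sure the ``non-aligned'' branch of the construction is the one being quoted, since the aligned case gives the other (non-abelian) group in the Zariski pair.
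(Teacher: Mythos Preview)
The paper does not prove this proposition at all: it is simply quoted from \cite{ac:98}, and the subsequent paragraph only remarks that the same reference computes the other (non-abelian) group in the pair. So there is no ``paper's proof'' to compare your argument against; your proposal is effectively an attempt to reconstruct what \cite{ac:98} does. In that sense your overall strategy (Zariski--van~Kampen applied to a pencil of lines, then checking the resulting presentation) is the right one and matches the method of the cited reference.

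That said, your sketch contains a genuine error and a piece of confused logic that you should fix before it becomes a proof. First, the claim that $\pi_1(\PP^2\setminus\mathcal{C})\cong\ZZ/3\ast\ZZ/2$ for a \emph{smooth} cubic is false: by the classical Zariski result the complement of a smooth degree~$d$ curve has fundamental group $\ZZ/d$, so here it is $\ZZ/3$, already abelian. You may be thinking of a cuspidal cubic or of the three-cuspidal quartic. Second, and relatedly, the narrative ``adding the three tangent lines kills the non-abelian part'' is backwards: adjoining the lines $X,Y,Z$ enlarges the group (more meridians, fewer relations killed), and the content of the proposition is precisely that in the non-aligned case this enlargement stays abelian, whereas in the aligned case it does not. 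Your step~(4) therefore needs to be rephrased: the node relations give $[\ell_x,\ell_y]=[\ell_y,\ell_z]=[\ell_z,\ell_x]=1$, the inflectional tangencies (type $\mathbb{A}_5$, two smooth branches with contact order~$3$) give braid-type relations between a line meridian and a cubic meridian, and the global relation is $(c^3\ell_x\ell_y\ell_z=1)$; the actual work is showing that in the non-aligned case the conjugating paths can be chosen so that \emph{all} cubic meridians appearing are simultaneously conjugate to a single one commuting with the $\ell$'s, which is exactly the finite presentation check you defer to \cite{ac:98}. Deferring it is fine, but then your proposal reduces to the citation the paper already makes.
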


In \cite{ac:98}, the fundamental group of the other member of the Zariski pair is also computed; 
since it is non-abelian, this invariant distinguishes the two members. For our purpose, we need 
a more geometrical presentation of the group involving meridians for all the irreducible components
and such that the meridians close to the nodes are made explicit.
Let us recall the concept of meridian in order to clarify what we mean by 
\emph{meridians close to a singular point}.

\begin{defn}\label{def-mrd} Let $Z$ be a connected quasi-projective manifold
and let $H$ be a hypersurface of $Z$. Consider $P\in Z\setminus H$ and $K$ an irreducible component of $H$. 
A homotopy class $\gamma\in\pi_1(Z\setminus H;P)$ is called a \emph{meridian about $K$ with respect to $H$}
if $\gamma=[\delta]$ for some loop $\delta$ satisfying the following:
\begin{enumerate}[label=\rm(\arabic*)]
\item there is a smooth complex analytic disk
$\Delta \subset Z$ transverse to $H$ such that
$\Delta\cap H=\{P'\} \subset K$
(transversality implies that $P'$ is a smooth point of $H$).

\item 
there is a path $\alpha$ in $Z\setminus H$ starting at $P$ and ending at some point $P'' \in \partial\Delta$.

\item $\delta=\alpha*\beta*\overline{\alpha}$, where the operation $*$ here means concatenation of paths 
from left to right, $\beta$ is the closed path obtained by traveling from $P''$ along $\partial\Delta$ in 
the positive direction and $\overline{\alpha}$ represents the path $\alpha$ traveled in the opposite direction, 
that is, $\overline{\alpha}(t):=\alpha(1-t)$.
\end{enumerate}
\end{defn}

It is well known that meridians with respect to the same irreducible component 
define a conjugacy class of members of the fundamental group.

\begin{ex}
Let $Z=\CC^2$ and $H=\{x y=0\}$ and let $P:=(1,1)$. The paths
$\mu_x,\mu_y:[0,1]\to Z\setminus H$ defined by 
\[
\mu_x(t)=(e^{2 i\pi t},1),\quad 
\mu_y(t)=(1,e^{2 i\pi t}),
\]
define meridians with respect to the irreducible components of $H$ (for which the path $\alpha$ is trivial).
They commute as elements in the fundamental group $\pi_1(Z\setminus H;P)$. If $Z$ is quasi-projective surface and $H$ 
is a curve containing a node, \emph{two meridians are close to the node} if there is a common path $\alpha$ from the base 
point of $\pi_1(Z\setminus H;P)$ to a point \emph{close} to the node such that the $\beta$-paths look like in this example.
\end{ex}

\begin{prop}[\cite{ACO-Kummer}]
Let  $\mathcal{C}$ be a smooth cubic with three tangent lines $X,Y,Z$ at inflections which are  aligned. Then,
$\pi_1(\PP^2\setminus(\mathcal{C}\cup X\cup Y \cup Z))$ is
\begin{equation}\label{eq:grupoP2}
\langle
c,\ell_x,\ell_y,\ell_z\ |\ 
[\ell_x,\ell_y]\!=\![\ell_y,\ell_z]\!=\![\ell_z,\ell_x]\!=\![c,\ell_x^{-1}\ell_z]
\!=\![c,\ell_y^{-1}\ell_z]\!=\!c\ell_x c\ell_y c\ell_z\!=\!\!1
\rangle
\end{equation}
where $c$ is a meridian of $\mathcal{C}$, and $\ell_x,\ell_y,\ell_z$ are meridians of $X,Y,Z$, respectively; 
moreover the meridians of the lines correspond to meridians close to the double points.
\end{prop}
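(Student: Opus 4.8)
The plan is to exploit the collinearity hypothesis: it forces the configuration to be governed by a pencil of cubics, after which $\pi_{1}$ can be read off from the associated elliptic fibration — this is precisely the situation handled by the machinery of \cite{ACO-Kummer}. First I would normalise the configuration. Since each of $X,Y,Z$ meets $\mathcal{C}$ with multiplicity $3$ at an inflection point and the three tangency points lie on a line, a projective change of coordinates takes $X,Y,Z$ to the coordinate lines and $\mathcal{C}$ to a cubic $\{xyz-\mu\,\ell^{3}=0\}$, where $\{\ell=0\}$ is the line through the three inflection points and $\mu\in\CC^{*}$ avoids the finitely many values for which this cubic is singular. Thus $\mathcal{C}$ is a member of the pencil $\mathcal{P}=\langle xyz,\ell^{3}\rangle$ of cubics and the sextic is $\mathcal{C}$ together with the reducible member $\{xyz=0\}$; moreover the base points of $\mathcal{P}$ are exactly the three inflection points $\{\ell=0\}\cap\{xyz=0\}$, every member restricts on each coordinate line to a perfect cube (hence is tangent there with contact $3$), the member $\{\ell^{3}=0\}$ is a triple line (a fibre of multiplicity $3$), and for generic $\mathcal{C}$ exactly one further member is a nodal cubic.

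Because the three base points lie on $\{xyz=0\}\subset\mathcal{C}\cup X\cup Y\cup Z$, the rational map $\phi=[xyz:\ell^{3}]$ restricts to a morphism on $U:=\PP^{2}\setminus(\mathcal{C}\cup X\cup Y\cup Z)$, with image $\PP^{1}$ minus the two values $\phi(\mathcal{C})$ and $\phi(\{xyz=0\})$ and with generic fibre an elliptic curve minus three points. I would then run the Zariski--van Kampen method for this pencil. Deleting from the base, in addition, the two interior values $\phi(\{\ell^{3}=0\})$ and the nodal value, $\phi$ becomes a locally trivial fibration over a four-punctured sphere $B^{\circ}$ with fibre $F$, so $1\to\pi_{1}(F)\to\pi_{1}(\phi^{-1}(B^{\circ}))\to\pi_{1}(B^{\circ})\to1$ is exact and split. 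The geometric monodromy is then computed around each puncture: trivial around $\phi(\mathcal{C})$ (that fibre is smooth); a single Dehn twist at the nodal fibre; an order-$3$ monodromy at the multiple fibre; and, around $\phi(\{xyz=0\})$, the monodromy of the triangle fibre, in which each of the three nodes of $\{xyz=0\}$ contributes a pair of commuting meridians and the contact-$3$ tangencies at the three base points govern the rest. Finally $\pi_{1}(U)$ is recovered by adding the two deleted curves $\{\ell=0\}$ and $\mathcal{C}_{\mathrm{nodal}}$ back, i.e. killing their meridians — which collapses the order-$3$ contribution of the multiple fibre and removes the extra twist.

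It remains to simplify the resulting presentation and to match it with \eqref{eq:grupoP2}. There $c$ is a meridian of $\mathcal{C}=\phi^{-1}(\phi(\mathcal{C}))$ and $\ell_{x},\ell_{y},\ell_{z}$ are meridians of the three components $X,Y,Z$ of the deleted fibre $\{xyz=0\}=\phi^{-1}(\phi(\{xyz=0\}))$, chosen close to the three nodes $X\cap Y$, $Y\cap Z$, $Z\cap X$ — where they pairwise commute, giving $[\ell_{x},\ell_{y}]=[\ell_{y},\ell_{z}]=[\ell_{z},\ell_{x}]=1$. Under $\phi$, $c$ maps to a loop about $\phi(\mathcal{C})$ and each $\ell_{x},\ell_{y},\ell_{z}$ to a loop about $\phi(\{xyz=0\})$, so $\ell_{x}^{-1}\ell_{z}$ and $\ell_{y}^{-1}\ell_{z}$ lie in $\pi_{1}(F)$; since the monodromy of the pencil about $\phi(\mathcal{C})$ is trivial, $c$ commutes with $\pi_{1}(F)$, yielding $[c,\ell_{x}^{-1}\ell_{z}]=[c,\ell_{y}^{-1}\ell_{z}]=1$, while the relation carried by the total monodromy of the pencil reduces to $c\ell_{x}c\ell_{y}c\ell_{z}=1$. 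Together these are \eqref{eq:grupoP2}.

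The main obstacle is this monodromy computation together with the final matching: because the members of $\mathcal{P}$ are tangent to $X,Y,Z$ with contact $3$ at the base points, a loop in a fibre about a base point is not a simple meridian but a word in $c$ and the $\ell$'s whose length is dictated by the contact order, so one must keep careful track of how this structure changes across the multiple fibre and the nodal fibre — this is exactly the bookkeeping organised in \cite{ACO-Kummer} — and then reduce the semidirect-product presentation to the symmetric form \eqref{eq:grupoP2}, checking en route that the generators obtained are genuinely \emph{meridians close to the double points}. Two alternative routes are also available. One can invoke the blow-up/blow-down description of the standard Cremona transformation from \S\ref{sec:cremona} to identify $U$ with the complement of an irreducible sextic with three $\mathbb{E}_{6}$-points together with a triangle of lines and compute a presentation there. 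Or one can compute directly with a generic pencil of lines, in which case the collinearity of the inflection points is precisely what makes the braid monodromy of the three $\mathbb{A}_{5}$- and three $\mathbb{A}_{1}$-points non-generic and produces the non-abelian group \eqref{eq:grupoP2}.
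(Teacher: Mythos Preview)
The paper does not give its own proof of this proposition: it simply cites \cite{ACO-Kummer} and records the presentation in the form needed later. So there is no ``paper's proof'' to compare against beyond that pointer.

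Your plan is a sound strategy and is in the spirit of the cited reference. The key observation---that collinearity of the three inflection points places $\mathcal{C}$ in the pencil $\langle xyz,\ell^{3}\rangle$, so that the complement fibres over a punctured $\PP^{1}$ with fibre an elliptic curve minus three points---is exactly the structural fact that makes the aligned case computable and distinguishes it from the non-aligned case. Your reading of the commutator relations is correct: the three $[\ell_i,\ell_j]$ come from the nodes of the triangle, and the two relations $[c,\ell_x^{-1}\ell_z]=[c,\ell_y^{-1}\ell_z]=1$ follow because $c$ lifts a loop around the smooth fibre $\phi(\mathcal{C})$, hence centralises the image of $\pi_1(F)$, while $\ell_x^{-1}\ell_z$ and $\ell_y^{-1}\ell_z$ lie in that image.

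Two points deserve more care than your sketch gives them. First, the claim ``exactly one further member is a nodal cubic'' is not quite right for this pencil in general; there can be several singular interior fibres, and you must check that killing all of their meridians does not introduce unexpected relations (it does not, since each contributes only a Picard--Lefschetz twist already absorbed by the other relations, but this needs to be said). Second, and more substantively, your derivation of the global relation $c\ell_x c\ell_y c\ell_z=1$ is asserted rather than shown: this is precisely where the contact order~$3$ at the base points and the multiplicity~$3$ of the fibre $\{\ell^3=0\}$ intervene, and getting the word \emph{in this exact cyclic order}, with meridians that are genuinely ``close to the double points'', requires the careful bookkeeping you allude to. That is the heart of the computation, and your plan correctly identifies it as the main obstacle but does not carry it out. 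The two alternative routes you mention (standard Cremona to the $3\mathbb{E}_6$ sextic, or direct braid monodromy via a line pencil) are both viable; the second is closer to what the title of \cite{ACO-Kummer} suggests is actually done there.
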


Let us fix $\Phi:=\Phi_{\omega,\beta_1,\beta_2}$ as in~\S\ref{sec:cremona}, 
and let us denote by $\tilde{\mathcal{C}}\subset \mathbb{P}^2_\omega$
the strict transform of the smooth cubic $\mathcal{C}$ by $\Phi$, where 
the lines $X,Y,Z$ have equations $x=0,y=0,z=0$, respectively. Consider the following three homogeneous 
polynomials of degree~3
\[
H_\lambda(x,y,z):=x^3+y^3+z^3+3 x y (\lambda^{-1} x+\lambda y)+3 x z (x+z)+3 y z (\lambda^{-1} y+\lambda z),
\]
where $\lambda^3=1$. The curve $\mathcal{C}_\lambda=\{H_\lambda=0\}$ is a smooth cubic which is tangent to the 
line $L_x$ at the inflection point $[0:1:-\lambda]$ and analogously for $Y$ at $[-1:0:1]$, and $Z$ at $[1:-\lambda:0]$. 
Note that for the cubic $\mathcal{C}_1$ the three inflection points are contained in the line $x+y+z=0$. 
However, for the smooth cubic $\mathcal{C}_{\exp {\frac{2i\pi}{3}}}$ the three inflection points are not aligned.

\begin{cor}\label{cor:abelian}
In the non-aligned case, $\pi_1(\PP^2_\omega\setminus\tilde{\mathcal{C}})$ is isomorphic to $\ZZ/3(\alpha_1 \alpha_2+\alpha_3 )$.
\end{cor}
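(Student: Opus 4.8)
The plan is to transport the fundamental group computation of Proposition~\ref{prop:pi1cubic1} from $\PP^2$ to $\PP^2_\omega$ along the birational map $\Phi = \Phi_{\omega,\beta_1,\beta_2}$, keeping track of the extra vanishing cycles that appear when one passes through the blow-up/blow-down decomposition. Concretely, recall that $\tilde{\mathcal{C}}$ is the strict transform under $\Phi$ of a smooth cubic $\mathcal{C}$ which is tangent at inflection points to the three coordinate lines $X,Y,Z$, so in the non-aligned case $\pi_1(\PP^2\setminus(\mathcal{C}\cup X\cup Y\cup Z))$ is abelian by Proposition~\ref{prop:pi1cubic1}. First I would observe that $\Phi$ restricts to an isomorphism between $\PP^2_\omega$ minus $\tilde{\mathcal{C}}$ and the images of the exceptional/line components, and $\PP^2$ minus $\mathcal{C}$ and the lines $X,Y,Z$ together with the three exceptional-divisor images; hence removing these extra divisors from both sides gives homeomorphic open surfaces, so their fundamental groups agree. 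Thus $\pi_1(\PP^2_\omega\setminus\tilde{\mathcal{C}})$ is obtained from the abelian group $\pi_1(\PP^2\setminus(\mathcal{C}\cup X\cup Y\cup Z))$ by adding back the divisors removed on the weighted side, i.e.\ by quotienting by the normal subgroup generated by suitable powers of the meridians of $X,Y,Z$.

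The key step is to identify exactly which relations get imposed. On the $\PP^2_\omega$ side the only component of the total transform that survives is $\tilde{\mathcal{C}}$; the strict transforms of $X,Y,Z$ and the three exceptional divisors of the blow-ups in the $\PP^2_\omega\to S$ direction all get contracted or become part of the coordinate-line configuration which we delete. Tracking meridians through the weighted blow-downs of types $(1,1)$, $(\alpha_2,\beta_1)$, $(\alpha_1,\beta_2)$ described in the Proposition of \S\ref{sec:cremona}, a meridian of a component contracted by an $(a,b)$-blow-up becomes, in the blown-down surface, a product of the meridians around the image point weighted by $a$ and $b$; conversely, filling in that point imposes a relation expressing that weighted combination trivially. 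Since $\mathcal{C}$ meets each line $X,Y,Z$ only at the tangency (inflection) point with intersection multiplicity $3$, the class $c$ of a meridian of $\mathcal{C}$ is, in the abelianization, equal to the product of the three line-meridians raised to the appropriate multiplicities; combining this with the blow-down relations forces $c$ to have order exactly $3(\alpha_1\alpha_2+\alpha_3)$ — the factor $3$ coming from the cubic's degree and triple tangency, the factor $\alpha_1\alpha_2+\alpha_3$ being the common $\omega$-degree of the three defining coordinates of $\Phi$, equivalently the degree of $\tilde{\mathcal{C}}$ as a divisor class in $\PP^2_\omega$. As $\pi_1$ of the complement of an irreducible curve is cyclic generated by a meridian, and $H_1(\PP^2_\omega\setminus\tilde{\mathcal{C}}) = \ZZ/\deg_\omega\tilde{\mathcal{C}}$, this pins the group down to $\ZZ/3(\alpha_1\alpha_2+\alpha_3)$.

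The main obstacle I expect is the careful bookkeeping of meridians and base points through the chain of weighted blow-ups and blow-downs: one must verify that the meridians appearing in the presentation of Proposition~\ref{prop:pi1cubic1} (or its geometric refinement with meridians close to the nodes) are precisely those to which the blow-down relations apply, and that no further nontrivial relations are introduced by the $(\alpha_i,\beta_j)$-contractions beyond the cyclic order constraint. This is exactly the type of computation the paper advertises as being transparent via the blow-up-down decomposition, so I would lean on the explicit Figure~\ref{fig:cremona} and the self-intersection/singularity data from~\cite[Theorem 4.3]{AMO-Intersection} to make the meridian transformations explicit. Once the abelian input is known, the only real content is that the weighted Cremona transformation multiplies the relevant order by the weighted degree factor $\alpha_1\alpha_2+\alpha_3$; everything else is homotopy-theoretic routine. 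A cleaner alternative, worth mentioning as a sanity check, is purely homological: since the group is a quotient of an abelian group it is abelian, hence equals $H_1(\PP^2_\omega\setminus\tilde{\mathcal{C}};\ZZ)$, and this first homology is cyclic of order equal to the $\omega$-degree $3(\alpha_1\alpha_2+\alpha_3)$ of the irreducible curve $\tilde{\mathcal{C}}$, which finishes the proof.
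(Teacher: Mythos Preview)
Your ``cleaner alternative'' in the last paragraph is exactly the paper's proof: establish the epimorphism from $\pi_1(\PP^2\setminus(\mathcal{C}\cup X\cup Y\cup Z))$ onto $\pi_1(\PP^2_\omega\setminus\tilde{\mathcal{C}})$ via the common blow-up surface~$S$ (the paper cites \cite[Lemma 4.18]{fujita:82} for the fact that the kernel is normally generated by meridians of the $E_i$), conclude the target is abelian because the source is, and then identify it with $H_1(\PP^2_\omega\setminus\tilde{\mathcal{C}};\ZZ)\cong\ZZ/\deg_\omega\tilde{\mathcal{C}}$. Two small points to clean up: your entire middle paragraph of meridian-tracking is unnecessary for this corollary (it becomes relevant only in the aligned case, Proposition~\ref{prop:group_cubic}), and the sentence ``$\pi_1$ of the complement of an irreducible curve is cyclic generated by a meridian'' is false in general---what you need, and what the paper states, is that $H_1$ is cyclic of order $\deg_\omega\tilde{\mathcal{C}}$, which holds here \emph{because $\tilde{\mathcal{C}}$ contains the three vertices of $\PP^2_\omega$}; you should make that hypothesis explicit.
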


\begin{proof}
The space $\PP^2_\omega\setminus\tilde{\mathcal{C}}$ is homeomorphic to 
$S\setminus(\widehat{\mathcal{C}}\cup L_x\cup L_y\cup L_z)$ (see Figure~\ref{fig:cremona}) and
the space $\PP^2\setminus(\mathcal{C}\cup X\cup Y \cup Z)$
is homeomorphic to $S\setminus(\widehat{\mathcal{C}}\cup L_x\cup L_y \cup L_z\cup E_x\cup E_y\cup E_z)$,
where $\widehat{\mathcal{C}}$ denotes the strict transform of $\mathcal{C}$ in~$S$.
As a consequence of \cite[Lemma 4.18]{fujita:82} the kernel of the epimorphism
\begin{equation}
\label{eq:pi1surj}
\pi_1(\PP^2\setminus(\mathcal{C}\cup X\cup Y \cup Z))\twoheadrightarrow
\pi_1(\PP^2_\omega\setminus\tilde{\mathcal{C}})
\end{equation}
is the normal subgroup generated by the meridians of $E_x,E_y,E_z$ in $S$. Since the source is 
an abelian group by Proposition~\ref{prop:pi1cubic1}, the group $\pi_1(\PP^2_\omega\setminus\tilde{\mathcal{C}})$ 
is abelian as well. Hence it coincides with 
$H_1(\PP^2_\omega\setminus\tilde{\mathcal{C}};\ZZ)\cong\ZZ/\deg(\tilde{\mathcal{C}})$,
since $\tilde{\mathcal{C}}$ contains the vertices of~$\PP^2_\omega$.
\end{proof}

In order to compute the other fundamental group we need a technical result.

\begin{lem}\label{lem:blow}
Let $\pi:\widehat{\mathbb{C}}^2_{(\alpha_1,\alpha_2)}\to\mathbb{C}^2$ be the $(\alpha_1,\alpha_2)$-blow-up of the origin in $\mathbb{C}^2$ and let 
$E$ denote its exceptional component. Let $X,Y\subset\mathbb{C}^2$ be the axes (curves of equations $x=0$, $y=0$, respectively), 
and let us keep this notation for their strict transforms. Let
$U:=\mathbb{C}^2\setminus(X\cup Y)\equiv\widehat{\mathbb{C}}^2_{(\alpha_1,\alpha_2)}\setminus(E\cup X\cup Y)$.

If $\mu_X,\mu_Y,\mu_E$ denote meridians of the respective curves in $\pi_1(U)\cong\ZZ\mu_X\oplus\ZZ\mu_Y$, then
(multiplicative notation) $\mu_E=\mu_X^{\alpha_1} \mu_Y^{\alpha_2}$.
\end{lem}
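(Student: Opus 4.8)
The plan is to work in one of the two quotient charts of $\widehat{\CC}^2_{(\alpha_1,\alpha_2)}$ and reduce the computation to an explicit parametrization of loops. First I would fix the chart $\widehat{\Psi}_{(\alpha_1,\alpha_2),2}\colon \frac{1}{\alpha_2}(\alpha_1,-1)\to\widehat{\CC}^2_{(\alpha_1,\alpha_2)}$, $(u,v)\mapsto((uv^{\alpha_1},v^{\alpha_2}),[u:1])$, which already appears in \S\ref{sec:wblowups}. In the coordinates $(x,y)$ on $\CC^2$ one has $x=uv^{\alpha_1}$, $y=v^{\alpha_2}$, so the exceptional divisor $E$ is $\{v=0\}$, the strict transform of $Y=\{y=0\}$ is also $\{v=0\}$ — so I should instead use the \emph{other} chart for $Y$, or better, observe directly that $E$, $X$, $Y$ meet the chart along $\{v=0\}$, $\{u=0\}$ together with the point at infinity of the chart; let me instead phrase everything upstairs in $\CC^2$, where $U=\CC^2\setminus(X\cup Y)=(\CC^*)^2$ and $\pi_1(U)\cong\ZZ\mu_X\oplus\ZZ\mu_Y$ with $\mu_X=\{(e^{2\pi i t},1)\}$, $\mu_Y=\{(1,e^{2\pi i t})\}$ as in the Example before this lemma. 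The homeomorphism $U\equiv\widehat{\CC}^2_{(\alpha_1,\alpha_2)}\setminus(E\cup X\cup Y)$ is just the restriction of $\pi$, since $\pi$ is an isomorphism away from the origin and $E=\pi^{-1}(0)$.

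The key step is to produce, inside $U$, an explicit meridian of $E$ and express it in the basis $\{\mu_X,\mu_Y\}$. A small complex-analytic disk transverse to $E$ at a smooth point of $E$ not lying on $X\cup Y$ can be taken in the chart coordinates as $\{(\epsilon,v) : |v|\le\delta\}$ for a fixed small $\epsilon\neq0$; its boundary is $v=\delta e^{2\pi i t}$, $t\in[0,1]$. Pushing this loop down to $\CC^2$ via $x=uv^{\alpha_1}$, $y=v^{\alpha_2}$ with $u=\epsilon$ gives the loop
\[
\gamma(t)=\bigl(\epsilon\,\delta^{\alpha_1}e^{2\pi i\alpha_1 t},\ \delta^{\alpha_2}e^{2\pi i\alpha_2 t}\bigr),\qquad t\in[0,1],
\]
which in $(\CC^*)^2\simeq\ZZ^2$ represents the class $(\alpha_1,\alpha_2)$, i.e. $\mu_X^{\alpha_1}\mu_Y^{\alpha_2}$. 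Up to the choice of a connecting path $\alpha$ from the base point to $\partial\Delta$ — which only changes $\gamma$ within its conjugacy class, hence, the group being abelian, not at all — this $\gamma$ is by Definition~\ref{def-mrd} a meridian of $E$. Therefore $\mu_E=\mu_X^{\alpha_1}\mu_Y^{\alpha_2}$.

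The only point needing a little care, and what I expect to be the main (minor) obstacle, is checking that $\gamma$ genuinely satisfies the transversality condition (1) of Definition~\ref{def-mrd}: the disk $\{(\epsilon,v):|v|\le\delta\}$ must be smooth in $\widehat{\CC}^2_{(\alpha_1,\alpha_2)}$ and transverse to $E$, which holds because for $\epsilon$ outside a finite set the point $[\epsilon:1]$ avoids the two quotient singularities $(0,[1:0])$ and $(0,[0:1])$ of the exceptional line, so near $(\epsilon,0)$ the chart map is a local biholomorphism and $\{u=\epsilon\}$ is a smooth analytic disk meeting $E=\{v=0\}$ transversally at one point off $X\cup Y$. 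One also notes the answer is independent of the choice of chart: repeating the computation in the $\frac{1}{\alpha_1}(-1,\alpha_2)$-chart yields the same class by symmetry. Since $\pi_1(U)$ is abelian and generated by $\mu_X,\mu_Y$, the identity $\mu_E=\mu_X^{\alpha_1}\mu_Y^{\alpha_2}$ holds in $\pi_1(U)$ as claimed.
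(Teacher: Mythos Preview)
Your proof is correct and follows essentially the same approach as the paper: pick one of the two quotient charts of the weighted blow-up, write down an explicit loop around $E=\{v=0\}$ in chart coordinates, and push it down via the chart map to read off its class $(\alpha_1,\alpha_2)$ in $\pi_1((\CC^*)^2)\cong\ZZ^2$. The paper uses the $\frac{1}{\alpha_1}(-1,\alpha_2)$-chart with base point $(1,1)$ and does not spell out the transversality check, whereas you work in the other chart and verify the conditions of Definition~\ref{def-mrd} more carefully; both computations are equivalent and yield $\mu_E=\mu_X^{\alpha_1}\mu_Y^{\alpha_2}$.
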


\begin{proof}
Consider $(1,1)$ as the base point, then $\mu_X$ is the loop $t\mapsto(e^{2i\pi t},1)$, while $\mu_Y$ is the loop 
$t\mapsto(1,e^{2i\pi t})$. Let us pick a chart of $\widehat{\mathbb{C}}^2_{(\alpha_1,\alpha_2)}$, say
\[
\begin{tikzcd}[row sep=0pt]
\frac{1}{\alpha_1}(-1,\alpha_2)\arrow[r]&\mathbb{C}^2\\
{[(x,y)]}\arrow[r,mapsto]&(x^{\alpha_1}, x^{\alpha_2}  y).
\end{tikzcd}
\]
The base point in the chart is the class of $(1,1)$; the equation of $E$ is $x=0$ and hence $\mu_E$ is represented by 
$t\mapsto[(e^{2i\pi t},1)]$. Hence, in $\mathbb{C}^2$ is represented by $t\mapsto(e^{2i \alpha_1\pi t},e^{2i \alpha_2\pi t})$
and the result follows.
\end{proof}

\begin{prop}\label{prop:group_cubic}
In the aligned case, $\pi_1(\PP^2_\omega\setminus\tilde{\mathcal{C}})$ is isomorphic to 
$\ZZ/3(\alpha_1 \alpha_2+\alpha_3 )$ if $2$ divides $\alpha_1 \alpha_2 \alpha_3 \beta_1\beta_2$ and to
\begin{equation}
\label{eq:pres-odd}
\langle
\ell,u
\ |\ 
\ell^{\alpha_1 \alpha_2+\alpha_3 }=1,u^3=\ell^{2}
\rangle
\end{equation}
otherwise. This group is a central extension of
$\mathbb{Z}/2*\mathbb{Z}/3$ by a cyclic group of order~$\frac{\alpha_1 \alpha_2+\alpha_3 }{2}$.
\end{prop}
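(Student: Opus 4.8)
The plan is to run the same reduction as in Corollary~\ref{cor:abelian}: the space $\PP^2_\omega\setminus\tilde{\mathcal C}$ is homeomorphic to $S\setminus(\widehat{\mathcal C}\cup L_x\cup L_y\cup L_z)$, and by \cite[Lemma 4.18]{fujita:82} its fundamental group is the quotient of the group~\eqref{eq:grupoP2} by the normal subgroup generated by meridians of the three exceptional divisors $E_x,E_y,E_z$ that were contracted. So the first step is to express the meridians $\varepsilon_x,\varepsilon_y,\varepsilon_z$ of $E_x,E_y,E_z$ in terms of the generators $c,\ell_x,\ell_y,\ell_z$ of~\eqref{eq:grupoP2}. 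Looking at Figure~\ref{fig:cremona}, the blow-up creating $E_z$ is of type $(\alpha_1,\alpha_2)$ performed at the point where $L_x$ and $L_y$ meet (in $S$ these are the curvettes through the $(\alpha_1,\alpha_2)$-blow-up), so Lemma~\ref{lem:blow} gives $\varepsilon_z=\ell_x^{\alpha_1}\ell_y^{\alpha_2}$ up to the appropriate identification. Similarly $E_x$ arises at the intersection of $L_y$ and $L_z$ (a blow-up of type $(1,\beta_1)$, hence $\varepsilon_x=\ell_y\ell_z^{\beta_1}$ or its symmetric counterpart) and $E_y$ at the intersection of $L_x$ and $L_z$ (type $(1,\beta_2)$, giving $\varepsilon_y=\ell_x\ell_z^{\beta_2}$); the precise exponents are read off from the proposition describing $\Phi_{\omega,\beta_1,\beta_2}$ as a composition of weighted blow-ups and downs, being careful about which of the two coordinates of each blow-up is the ``$-1$'' one.

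The second step is the group-theoretic simplification. We add the relations $\ell_y\ell_z^{\beta_1}=\ell_x\ell_z^{\beta_2}=\ell_x^{\alpha_1}\ell_y^{\alpha_2}=1$ to~\eqref{eq:grupoP2}. From the first two we get $\ell_x=\ell_z^{-\beta_2}$, $\ell_y=\ell_z^{-\beta_1}$; substituting into $\ell_x^{\alpha_1}\ell_y^{\alpha_2}=1$ yields $\ell_z^{-(\alpha_1\beta_1+\alpha_2\beta_2)}=1$, i.e.\ $\ell_z^{\alpha_1\alpha_2+\alpha_3}=1$ using $\alpha_1\beta_1+\alpha_2\beta_2=\alpha_3+\alpha_1\alpha_2$. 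Now everything is generated by $c$ and $\ell:=\ell_z$. The commutators $[\ell_x,\ell_y]$, $[\ell_y,\ell_z]$, $[\ell_z,\ell_x]$ become trivial automatically since all three are powers of $\ell$; the relations $[c,\ell_x^{-1}\ell_z]=[c,\ell_y^{-1}\ell_z]=1$ become $[c,\ell^{\beta_2+1}]=[c,\ell^{\beta_1+1}]=1$, so $c$ commutes with $\ell^{\gcd(\beta_1+1,\beta_2+1,\,\alpha_1\alpha_2+\alpha_3)}$; finally the long relation $c\ell_x c\ell_y c\ell_z = 1$ becomes $c\ell^{-\beta_2}c\ell^{-\beta_1}c\ell=1$, which after collecting (using only that $c$ commutes with the relevant power of $\ell$, which one must check covers $\ell^{\beta_1}$ and $\ell^{\beta_2}$ modulo the central part) rearranges to $c^3=\ell^{\beta_1+\beta_2-1}$. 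Using the congruence $\alpha_1\beta_1+\alpha_2\beta_2\equiv\alpha_3\pmod{\alpha_1\alpha_2+\alpha_3}$ together with the relation $\alpha_1\beta_1+\alpha_2\beta_2 = \alpha_1\alpha_2+\alpha_3$, a parity computation identifies $\beta_1+\beta_2-1 \equiv 2 \pmod{\alpha_1\alpha_2+\alpha_3}$ when $\alpha_1\alpha_2\alpha_3\beta_1\beta_2$ is odd, so up to renaming $c$ this gives $u^3=\ell^2$, $\ell^{\alpha_1\alpha_2+\alpha_3}=1$, which is~\eqref{eq:pres-odd}; when $2\mid\alpha_1\alpha_2\alpha_3\beta_1\beta_2$ the exponent of $\ell$ on the right is a unit or makes the group collapse to the cyclic group $\ZZ/3(\alpha_1\alpha_2+\alpha_3)$, matching the abelian case.

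For the last sentence, set $d:=\frac{\alpha_1\alpha_2+\alpha_3}{2}$ (an integer precisely in the non-collapsing case, since $\alpha_1\alpha_2+\alpha_3$ is even there). The element $z:=\ell^2=u^3$ is central: $\ell^2$ is a power of the central generator $\ell$ and $u^3$ is manifestly central in the one-relator-type presentation. Quotienting by $\langle z\rangle$ kills $\ell^2$ and $u^3$, leaving $\langle \bar\ell,\bar u\mid \bar\ell^2=\bar u^3=1\rangle=\ZZ/2*\ZZ/3$. It remains to check that $\langle z\rangle$ is cyclic of order exactly $d$: from $\ell^{2d}=\ell^{\alpha_1\alpha_2+\alpha_3}=1$ the order divides $d$, and one verifies it is not smaller by mapping to the abelianization $\ZZ/3(\alpha_1\alpha_2+\alpha_3)$, where the image of $z$ has order $d$. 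Hence the group is a central extension of $\ZZ/2*\ZZ/3$ by $\ZZ/d$.

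The main obstacle I anticipate is bookkeeping in the first step: matching the abstract meridian relations coming from Lemma~\ref{lem:blow} to the \emph{specific} generators and orientations in~\eqref{eq:grupoP2}, and in particular getting the exponents $\beta_1,\beta_2$ (versus $\beta_1+1,\beta_2+1$, or their inverses) correct, since the lines $L_x,L_y,L_z$ in $S$ play asymmetric roles and the statement ``meridians of the lines correspond to meridians close to the double points'' has to be used to align the two pictures. Once the three meridian identities $\varepsilon_x,\varepsilon_y,\varepsilon_z$ are pinned down correctly, the rest is the routine presentation manipulation sketched above, and the parity dichotomy falls out of the congruence $\alpha_1\beta_1+\alpha_2\beta_2=\alpha_1\alpha_2+\alpha_3$ modulo~$2$.
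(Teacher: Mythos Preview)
Your overall strategy matches the paper's, but the execution has a genuine error in the first step that propagates through everything else. The three meridians you kill are
\[
\varepsilon_z=\ell_x^{\alpha_1}\ell_y^{\alpha_2},\qquad
\varepsilon_y=\ell_x\ell_z^{\beta_2},\qquad
\varepsilon_x=\ell_y\ell_z^{\beta_1},
\]
whereas the paper kills
\[
\ell_x\ell_y,\qquad \ell_x^{\alpha_1}\ell_z^{\beta_2},\qquad \ell_y^{\alpha_2}\ell_z^{\beta_1}.
\]
The difference is not cosmetic. You applied Lemma~\ref{lem:blow} with the weights of the blow-ups \emph{from} $\PP^2_\omega$ (types $(\alpha_1,\alpha_2)$, $(1,\beta_1)$, $(1,\beta_2)$), but the meridians that must be killed are those of the exceptional divisors of the blow-ups \emph{from} $\PP^2$ (types $(1,1)$, $(\alpha_2,\beta_1)$, $(\alpha_1,\beta_2)$); it is these, not the $E$'s you name, whose meridians are expressible via Lemma~\ref{lem:blow} in terms of the generators $\ell_x,\ell_y,\ell_z$ of~\eqref{eq:grupoP2}. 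A quick sanity check exposes the problem: with your relations, $\ell_x=\ell_z^{-\beta_2}$, $\ell_y=\ell_z^{-\beta_1}$, and $\ell_x^{\alpha_1}\ell_y^{\alpha_2}=1$ gives $\ell_z^{\alpha_1\beta_2+\alpha_2\beta_1}=1$ (not $\alpha_1\beta_1+\alpha_2\beta_2$ as you wrote), and the abelianization then has order $3(\alpha_1\beta_2+\alpha_2\beta_1)$, which is in general \emph{not} $3(\alpha_1\alpha_2+\alpha_3)$. For instance with $(\alpha_1,\alpha_2,\alpha_3,\beta_1,\beta_2)=(1,3,5,5,1)$ one gets $48$ instead of the required $24$.

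Once the correct meridian relations are in place, your sketch of the simplification is also too loose. With the paper's relations one first gets $\ell_x=\ell_y^{-1}$, then $\ell_y$ is a power of $\ell:=\ell_z$ via a B\'ezout identity $\alpha_2\hat\alpha_1-\alpha_1\hat\alpha_2=1$, namely $\ell_y=\ell^{-(\hat\alpha_1\beta_2+\hat\alpha_2\beta_1)}$, together with $\ell^{\alpha_1\alpha_2+\alpha_3}=1$. The long relation is then handled not by ``collecting'' (which would require $[c,\ell]=1$, false in general) but by using the two surviving commutator relations $[c,\ell_y\ell]=[c,\ell_y^{-1}\ell]=1$ to rewrite $c\ell_y^{-1}c\ell_yc\ell=1$ as $(c\ell)^3=\ell^2$; setting $u=c\ell$ yields the presentation with the extra relation $[u,\ell^{\hat\alpha_1\beta_2+\hat\alpha_2\beta_1-1}]=1$. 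The parity dichotomy then comes from whether this exponent is even (relation redundant, giving~\eqref{eq:pres-odd}) or odd (forcing $[u,\ell]=1$, hence abelian), together with the parity of $\alpha_1\alpha_2+\alpha_3$; the paper checks this is exactly the condition $2\mid\alpha_1\alpha_2\alpha_3\beta_1\beta_2$. Your claimed congruence $\beta_1+\beta_2-1\equiv 2\pmod{\alpha_1\alpha_2+\alpha_3}$ is not correct and is not how the $\ell^2$ on the right of $u^3=\ell^2$ arises. Your treatment of the central-extension statement at the end is essentially fine.
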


\begin{proof}
Following the proof of Corollary~\ref{cor:abelian}, the epimorphism described in~\eqref{eq:pi1surj}
also holds in this case. Hence a presentation of $\pi_1(\PP^2_\omega\setminus\tilde{\mathcal{C}})$ can be given once
meridians of $E_x,E_y,E_z$ are written in terms of the generators provided in~\eqref{eq:grupoP2}. 
Since the meridians $\ell_x,\ell_y,\ell_z$ of the lines in the presentation~\eqref{eq:grupoP2} are homotopic to 
meridians close to the double points, by Lemma~\ref{lem:blow} we have that $\ell_x \ell_y$ is a meridian of $E_z$, 
$\ell_x^{\alpha_1}\ell_z^{\beta_2}$ is a meridian of $E_x$, and $\ell_y^{\alpha_2}\ell_z^{\beta_1}$ is a meridian of $E_y$.
Hence a presentation of $\pi_1(\PP^2_\omega\setminus\tilde{\mathcal{C}})$ can be obtained by adding the relations
\begin{equation}
\label{eq:rels-meridians} 
\ell_x \ell_y=\ell_x^{\alpha_1}\ell_z^{\beta_2}=\ell_y^{\alpha_2}\ell_z^{\beta_1}=1
\end{equation}
to the presentation given in~\eqref{eq:grupoP2}.

Finally, let us simplify this presentation. As a first step one can eliminate $\ell_x$, since $\ell_x=\ell_y^{-1}$. 
Also, choose $\hat{\alpha}_1,\hat{\alpha}_2\in\ZZ$ such that $\alpha_2\hat{\alpha}_1-\alpha_1\hat{\alpha}_2=1$. 
Note that $\ell_y,\ell_z$ commute; then the 
remaining two relations in~\eqref{eq:rels-meridians} become
\[
\ell_y^{-\alpha_1}\ell_z^{\beta_2}=\ell_y^{\alpha_2}\ell_z^{\beta_1}=1\Longrightarrow
\begin{cases}
1=\ell_z^{\alpha_1\beta_1+\alpha_2\beta_2}= \ell_z^{\alpha_1 \alpha_2+\alpha_3 }, \\
\ell_y=\ell_z^{-(\hat{\alpha}_1\beta_2+\hat{\alpha}_2\beta_1)}.
\end{cases}
\]
In fact, this is an equivalence. Let us denote $\ell:=\ell_z$ and $u:=c\ell$. 
Since $[c,\ell_y\ell]=[c,\ell_y^{-1}\ell]=1$, one has
\[
\begin{aligned}
1=&c\ell_y^{-1} c\ell_y c\ell=c\ell_y^{-1}c \ell_y\ell \ell^{-1}c\ell=c\ell_y^{-1}(\ell_y\ell)c\ell^{-1}c\ell 
\Longleftrightarrow \\
1=&(c\ell)^2c\ell^{-1}=(c\ell)^3\ell^{-2}\Longleftrightarrow 
\ell^2=u^3.
\end{aligned}
\]
Hence $\pi_1(\PP^2_\omega\setminus\tilde{\mathcal{C}})$ admits a presentation
\begin{equation}
\label{eq:pres1}
\langle
\ell,u
\ |\ 
\ell^{\alpha_1 \alpha_2+\alpha_3 }=1,[u,\ell^{\hat{\alpha}_1\beta_2+\hat{\alpha}_2\beta_1-1}]=1,u^3=\ell^{2}
\rangle.
\end{equation}
Note that, using $\ell^{2}=u^3$, the relation $[u,\ell^{\hat{\alpha}_1\beta_2+\hat{\alpha}_2\beta_1-1}]=1$ can be either 
eliminated or replaced by $[u,\ell]=1$ depending on the parity of $\hat{\alpha}_1\beta_2+\hat{\alpha}_2\beta_1$.
In addition, $\ell$ can also be eliminated using $\ell^{\alpha_1 \alpha_2+\alpha_3 }=1$ and $u^3=\ell^{2}$ in case $\alpha_1 \alpha_2+\alpha_3 $ is odd.
In particular, if $\hat{\alpha}_1\beta_2+\hat{\alpha}_2\beta_1$ is even or $\alpha_1 \alpha_2+\alpha_3 $ is odd, then~\eqref{eq:pres1} becomes 
an abelian group. Otherwise, one obtains the presentation~\eqref{eq:pres-odd}.

It is immediate to verify that $\hat{\alpha}_1\beta_2+\hat{\alpha}_2\beta_1$ is odd and $\alpha_1 \alpha_2+\alpha_3 $ even if and only if 
$\alpha_1 \alpha_2 \alpha_3 \beta_1 \beta_2$ is odd, which ends the proof.
\end{proof}

\begin{cor}\label{cor:derived}
The derived subgroup $F$ of $\pi_1(\PP^2_\omega\setminus\tilde{\mathcal{C}})$ (in the non-abelian case)
is the direct product of $\ZZ/(\frac{\alpha_1 \alpha_2+\alpha_3 }{2})$ and a free group of rank~$2$. The characteristic 
polynomial of the action of the monodromy on $F/F'\otimes_{\ZZ}\CC$ is $t^2-t+1$.
\end{cor}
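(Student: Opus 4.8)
The plan is to analyze the group $G:=\pi_1(\PP^2_\omega\setminus\tilde{\mathcal{C}})$ in the non-abelian case using its presentation $\langle \ell,u \mid \ell^{N}=1, u^3=\ell^2\rangle$ from \eqref{eq:pres-odd}, where $N:=\alpha_1\alpha_2+\alpha_3$ is even (and $N/2$ is the order of the central cyclic subgroup, by the last sentence of Proposition~\ref{prop:group_cubic}). First I would make explicit the central extension $1\to Z\to G\to \ZZ/2*\ZZ/3\to 1$ with $Z$ cyclic of order $N/2$: the quotient map sends $\ell\mapsto \bar\ell$ and $u\mapsto\bar u$ where in $\ZZ/2*\ZZ/3=\langle \bar\ell,\bar u\mid \bar\ell^2=\bar u^3=1\rangle$ (rewriting via $\bar u^3=\bar\ell^2=1$), and $Z$ is generated by $\ell^2=u^3$. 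The ``monodromy'' here is the automorphism of $G$ induced geometrically, which on the abelianization $G/G'=H_1\cong\ZZ/N$ acts by multiplication by a root of $t^2-t+1$ (a primitive sixth root of unity mod $N$); I should identify this automorphism concretely first so that its action on $F:=G'$ is unambiguous.

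Next I would compute $F=[G,G]$. Since $Z$ is central, $F$ maps onto $[\ZZ/2*\ZZ/3,\ZZ/2*\ZZ/3]$, which is well known to be a free group of rank $2$ — it is the kernel of $\ZZ/2*\ZZ/3\to\ZZ/2\times\ZZ/3=\ZZ/6$, an index-$6$ subgroup of the modular group $\mathrm{PSL}_2(\ZZ)$, hence free of rank $1+6(\tfrac12+\tfrac13-1)\cdot(-1)=2$ by Euler characteristic / Nielsen--Schreier. The kernel of $F\to [\ZZ/2*\ZZ/3,\ZZ/2*\ZZ/3]$ is $F\cap Z$. One checks $F\cap Z=Z$: indeed $\ell^2=u^3$ lies in $F$ because in $H_1=\ZZ/N$ the image of $\ell^2$ is $2$, which must be killed — more carefully, since $G$ is generated by $\ell,u$ with $H_1$-images generating $\ZZ/N$, and the relation $u^3=\ell^2$ forces $3\bar u=2\bar\ell$ in $\ZZ/N$, one computes directly that the generator of $Z$ is a product of commutators. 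Granting $F\cap Z=Z$, I get a central extension $1\to \ZZ/(N/2)\to F\to F_2\to 1$ of a free group by a cyclic group; such an extension splits (free groups are projective), so $F\cong \ZZ/(N/2)\times F_2$, which is the asserted structure.

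For the characteristic polynomial, the monodromy acts trivially on the central factor $\ZZ/(N/2)$ (it is generated by $\ell^2=u^3$, and the monodromy multiplies $\ell$ and $u$ by compatible sixth-roots of unity mod $N$, hence fixes $\ell^2$ and $u^3$ up to the ambiguity that disappears on $\ZZ/(N/2)$), so the interesting action is on $F_2^{\mathrm{ab}}=F/F'\otimes\CC$, up to the torsion which vanishes after $\otimes\CC$. The induced map on $F_2=[\ZZ/2*\ZZ/3,\ZZ/2*\ZZ/3]$ is the one coming from the outer action of the monodromy on $\ZZ/2*\ZZ/3$; on $H_1$ of the whole group the monodromy is multiplication by a primitive sixth root $\zeta$ with $\zeta^2-\zeta+1=0$, and since $F_2^{\mathrm{ab}}$ sits inside the homology of the corresponding $\ZZ/6$-cover of (an orbifold model of) $\PP^1$ as the non-trivial isotypic piece, the eigenvalues of the monodromy on it are exactly the primitive sixth roots of unity, giving characteristic polynomial $t^2-t+1$. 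The main obstacle I anticipate is pinning down precisely which geometric automorphism ``the monodromy'' refers to and verifying its action on $F_2^{\mathrm{ab}}$ rather than just on $G^{\mathrm{ab}}$ — i.e., showing the eigenvalues do not degenerate to a double root or to $\pm1$; this is cleanest done by passing to the free product $\ZZ/2*\ZZ/3$, where $F_2^{\mathrm{ab}}$ is the regular-minus-trivial-minus-sign representation of $\ZZ/6$ restricted appropriately, and reading off the character.
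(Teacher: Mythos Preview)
There is a concrete error in your computation of the abelianization. From the presentation $G=\langle \ell,u\mid \ell^{N}=1,\ u^{3}=\ell^{2}\rangle$ (with $N=\alpha_1\alpha_2+\alpha_3$) the abelianization is $\ZZ\bar\ell\oplus\ZZ\bar u/\langle N\bar\ell,\ 3\bar u-2\bar\ell\rangle$, whose presentation matrix $\left(\begin{smallmatrix}N&0\\-2&3\end{smallmatrix}\right)$ has determinant $3N$ and pairwise coprime entries; hence $G^{\mathrm{ab}}\cong\ZZ/3N$, not $\ZZ/N$. (This matches Corollary~\ref{cor:abelian}: $H_1\cong\ZZ/\deg\tilde{\mathcal{C}}=\ZZ/3(\alpha_1\alpha_2+\alpha_3)$.) Under the identification $\bar\ell\mapsto 3$, $\bar u\mapsto 2$ in $\ZZ/3N$, the central element $\ell^{2}=u^{3}$ maps to $6\neq 0$ whenever $N>2$. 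Thus your key claim $Z\subset F=[G,G]$ is false: one has $(\ell^{2})^{k}\in F$ iff $6k\equiv 0\pmod{3N}$ iff $(N/2)\mid k$, i.e.\ $Z\cap F=\{1\}$.

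This reverses the outcome of your splitting step: the surjection $F\twoheadrightarrow[\ZZ/2*\ZZ/3,\ZZ/2*\ZZ/3]\cong F_2$ has \emph{trivial} kernel, so $F\cong F_2$ is already free of rank~$2$, with no $\ZZ/(N/2)$ factor. (Equivalently, $G\cong B_3/\langle c^{N/2}\rangle$ where $c=\ell^{2}=u^{3}$ generates $Z(B_3)$; since $Z(B_3)\cap[B_3,B_3]=1$, killing $c^{N/2}$ leaves $[B_3,B_3]\cong F_2$ untouched.) So the first assertion of the corollary, as literally written, does not hold for the derived subgroup; the group that \emph{is} isomorphic to $\ZZ/(N/2)\times F_2$ is the kernel of $G\to\ZZ/6$, not $[G,G]$. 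Your discussion of the ``monodromy'' is also off target: it is not a geometric automorphism of $G$ acting on $G^{\mathrm{ab}}$ (conjugation is trivial there), but the conjugation action of a generator of $G^{\mathrm{ab}}$ on $F/F'$. With $F\cong F_2$ this is exactly the Alexander module of the trefoil group, giving characteristic polynomial $t^{2}-t+1$; that part of your plan is fine and is in any case insensitive to the torsion issue, since $F/F'\otimes_{\ZZ}\CC\cong\CC^{2}$ either way.
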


\subsection{A family of Zariski pairs of irreducible weighted projective curves.}

Summarizing the previous section, let $\omega=(\alpha_1 ,\alpha_2,\alpha_3)$ be pairwise coprime positive integers, and $\beta_1, \beta_2$ such 
that $\alpha_1\beta_1+\alpha_2\beta_2=\alpha_1 \alpha_2+\alpha_3 $. Consider $\mathcal{C}$ a smooth projective cubic and $\Phi_1$ (resp.~$\Phi_2$) 
the weighted Cremona transformation from $\mathbb{P}^2_\omega$ to $\mathbb{P}^2$ with respect to three tangent lines 
to $\mathcal{C}$ at aligned (resp.~non-aligned) inflection points. Let us denote by $\tilde\Phi_i^*(\mathcal{C})$ the strict transform of $\cC$
by the Cremona transformation~$\Phi_i$.

\begin{thm}\label{thm:ZP}
Under the conditions above, if $\alpha_1\alpha_2\alpha_3\beta_1\beta_2$ is odd then $(\tilde\Phi_1^*(\mathcal{C}),\tilde\Phi_2^*(\mathcal{C})\!)$
is a Zariski pair of irreducible weighted projective curves of degree $3(\alpha_1 \alpha_2+\alpha_3 )$ in~$\mathbb{P}^2_\omega$.
\end{thm}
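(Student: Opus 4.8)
The plan is to show that the two curves $\tilde\Phi_1^*(\mathcal{C})$ and $\tilde\Phi_2^*(\mathcal{C})$ have the same combinatorics but non-homeomorphic embeddings, the latter being detected by the fundamental groups of their complements. First I would check that the two curves indeed define linearly equivalent divisors and have the same local singularity types, i.e.\ the same combinatorics in the sense defined at the beginning of Section~\ref{sec:zp}. Both are obtained by applying the inverse of a weighted Cremona transformation $\Phi_{\omega,\beta_1,\beta_2}$ to a smooth cubic $\mathcal{C}$ together with three tangent lines at inflection points; the degree of the image is $3(\alpha_1\alpha_2+\alpha_3)$ in both cases (each tangent line meets $\mathcal{C}$ with multiplicity three, and the blow-up--down description in~\S\ref{sec:cremona} is insensitive to whether the inflections are aligned). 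The local singularities of the transformed curve are governed entirely by the geometry of the blown-down exceptional curves $L_x,L_y,L_z$ and $E_x,E_y,E_z$ in the surface $S$ of Figure~\ref{fig:cremona}, hence are the same for $\Phi_1$ and $\Phi_2$; one must only verify that the generic cubic can be chosen so that, for the non-aligned member, the remaining intersection points are nodes, exactly as in the classical case recalled in~\S\ref{sec:zp}. Irreducibility is inherited from the irreducibility of $\mathcal{C}$ under a birational map.

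Next I would invoke the fundamental-group computations already established. By Corollary~\ref{cor:abelian}, $\pi_1(\PP^2_\omega\setminus\tilde\Phi_2^*(\mathcal{C}))\cong\ZZ/3(\alpha_1\alpha_2+\alpha_3)$ in the non-aligned case. By Proposition~\ref{prop:group_cubic}, in the aligned case $\pi_1(\PP^2_\omega\setminus\tilde\Phi_1^*(\mathcal{C}))$ is the non-abelian group with presentation~\eqref{eq:pres-odd} precisely when $\alpha_1\alpha_2\alpha_3\beta_1\beta_2$ is odd --- which is exactly the hypothesis of the theorem. So under that hypothesis one group is abelian and the other is not, hence they are not isomorphic, and therefore the embeddings are not homeomorphic. (Alternatively, and more robustly, Corollary~\ref{cor:derived} shows the derived subgroup of the aligned group is nontrivial, with the monodromy acting with characteristic polynomial $t^2-t+1$ --- a genuine Alexander-type invariant --- while the abelian group has trivial derived subgroup; this makes the distinction invariant-theoretic rather than just presentation-dependent.)

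Finally I would assemble these into the statement: the pair has identical combinatorics and degree $3(\alpha_1\alpha_2+\alpha_3)$, the members are irreducible, yet $\pi_1$ of the complements differ, so by definition this is a Zariski pair. The main obstacle I anticipate is not the group theory --- that is done in the preceding results --- but the bookkeeping needed to certify that the combinatorics genuinely coincide: one must confirm that the dual graph of the minimal resolution of $\tilde\Phi_i^*(\mathcal{C})$, with the strict transform marked, is the same for $i=1,2$, and in particular that no extra tangency or coincidence forced by the alignment of inflection points alters a local singularity type or the global intersection pattern. This requires reading off, from the configuration in Figure~\ref{fig:cremona}, where $\widehat{\mathcal{C}}$ meets $L_x,L_y,L_z$ and the singular points of $S$, and checking these intersection data are alignment-independent; once that is in hand the theorem follows immediately from Corollaries~\ref{cor:abelian},~\ref{cor:derived} and Proposition~\ref{prop:group_cubic}.
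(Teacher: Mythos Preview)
Your proposal is correct and follows essentially the same approach as the paper's proof: establish that both curves are irreducible (via birationality of the Cremona map) and share the same combinatorics (since the local singularities are determined by the local configuration of $\mathcal{C}$ with its three tangent lines, which is alignment-independent), then invoke Corollary~\ref{cor:abelian} and Proposition~\ref{prop:group_cubic} to conclude that the fundamental groups differ when $\alpha_1\alpha_2\alpha_3\beta_1\beta_2$ is odd. The paper dispatches the combinatorics check in a single sentence rather than treating it as an obstacle, but otherwise your outline matches the paper's argument exactly.
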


\begin{proof}
Since both $\Phi_i$, $i=1,2$ are birational and $\mathcal{C}$ is irreducible, then $\tilde\Phi_i^*(\mathcal{C})$, $i=1,2$ 
are both irreducible as well. 
Also, the singularities of $\tilde\Phi_i^*(\mathcal{C})$ are determined locally by the singularities of the union of  
$\mathcal{C}$ and the lines used for the Cremona transformation $\Phi_i$. Hence, $\tilde\Phi_1^*(\mathcal{C})$ and 
$\tilde\Phi_2^*(\mathcal{C})$ have the same combinatorics. Finally, if $\alpha_1 \alpha_2 \alpha_3 \beta_1 \beta_2$ is odd, then 
by Proposition~\ref{prop:group_cubic} and Corollary~\ref{cor:abelian} the fundamental groups of their complements
are not isomorphic. This ends the proof.
\end{proof}

\subsection{Cyclic covers and their irregularity \`a la Esnault-Viehweg.}\label{sec:EsnaultViehweg}

The purpose of this section is to prove Theorem~\ref{thm:ZP} via a generalization of the Alexander polynomial method, 
that is, the calculation of invariants associated with cyclic covers of the weighted projective plane ramified along 
the curves. In particular, we will calculate the dimension of the eigenspaces of the homology in degree~$1$ of the cover with respect to the 
action of the deck transformation. This approach was originally used by Zariski~\cite{Zariski-irregularity} for sextics
with six cusps in the projective plane. Later on, Libgober~\cite{Libgober-alexander} and Esnault~\cite{es:82} made 
significant progress in this direction for cyclic covers and projective plane. Also Esnault-Viehweg~\cite{Esnault-Viehweg82}
gave the tools that allowed the first author in~\cite{Artal94}, Sabbah~\cite{Sabbah-Alexander}, and 
Loeser-Vaquie~\cite{Loeser-Vaquie-Alexander} to find descriptions of the irregularity of cyclic covers.
This approach was extended by Libgober~\cite{Libgober-characteristic} for abelian covers.
The approach presented here is a generalization of Esnault-Viehweg's and was developed by the authors for cyclic 
covers of surfaces with abelian quotient singularities and $\Q$-resolutions (or partial resolutions) in~\cite{ACM19}.

Let $\rho:X\to\PP^2_\omega$ be the cyclic cover of $\PP^2_\omega$ ramified along a reduced curve $\cC$ of degree~$d$. 
Consider $\check{X}=\rho^{-1}(\PP^2_\omega\setminus(\cC\cup\sing\PP^2_\omega))$ the unramified part of the cover
and let $\sigma:\check{X}\to\check{X}$ be a generator of the monodromy of the unramified cover.

Let $\pi:Y\to\PP^2_\omega$ be a $\Q$-embedded resolution of $\cC$. For $P\in\sing\cC$, let $\Gamma_P$
be the dual graph of the exceptional divisor of $\pi$ over $P$. For any $v$ vertex of $\Gamma_P$ we will denote by 
$E_v$ the associated exceptional divisor over $P$ and by $m_v$ (resp.~$\nu_v-1$) the coefficient of $E_v$ in the divisor 
$\pi^*\cC$ (resp.~in $K_\pi$, the relative canonical divisor). 

The following result describes a method to recover the dimension of the different eigenspaces of $H^1(X,\CC)$ 
with respect to the monodromy action (or deck transformation of the cover). A more general result can be found 
in~\cite[Theorem 4.4]{ACM19} for non-reduced divisors, but we state it here for covers associated with reduced divisors.

\begin{thm}[{\cite[Theorem 4.4]{ACM19}}]\label{thm:acm19}
The dimension of the eigenspace of $\sigma^*$ acting on $H^1(X;\CC)$ for the eigenvalue $e^{\frac{2i\pi k}{d}}$, $0<k<d$, 
equals $\dim\coker\pi^{(k)}+\dim\coker\pi^{(d-k)}$ where
\[
\pi^{(k)}: H^0\left(\PP^2_{\omega},\mathcal{O}_{\PP^2_w}\left( kH+K_{\PP^2_w}\right) \right) 
\longrightarrow \bigoplus_{P \in\sing\cC}
\frac{\mathcal{O}_{\PP^2_\omega,P}\left( kH+K_{\PP^2_\omega}\right)}{\mathcal{M}_{\mathcal{C},P}^{(k)}},
\]
is naturally defined given $H$ a divisor of degree~$1$, $K_{\PP^2_\omega}$ denotes the canonical divisor, and 
$\mathcal{M}_{\mathcal{C},P}^{(k)}$ is the following $\mathcal{O}_{\PP^2_\omega,P}$-module of quasi-adjunction
$$
\mathcal{M}_{\mathcal{C},P}^{(k)}:=
\left\{ g \in\mathcal{O}_{\PP^2_\omega,P}\left( kH+K_{\PP^2_\omega}\right)
\vphantom{\frac{k m_{v}}{d}}\right.
\left|\ \mult_{E_v} \pi^* g > 
\frac{k m_{v}}{d} - \nu_v, \ \forall v \in \Gamma_P \right\}.
$$
\end{thm}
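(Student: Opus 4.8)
The final statement is Theorem~\ref{thm:acm19}, which the authors quote verbatim from~\cite[Theorem 4.4]{ACM19}. Since it is cited as an external result, the honest thing is a short proof-by-reference together with a reduction of the reduced-divisor case from the general non-reduced statement of~\cite{ACM19}. I would proceed as follows.

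First I would recall the global setup: let $\rho\colon X\to\PP^2_\omega$ be the cyclic $d$-fold cover ramified along the reduced curve $\cC$ of degree $d$. Over the complement $\check X$ of $\cC\cup\sing\PP^2_\omega$ the cover is unramified of degree $d$, so $H^1(X;\CC)$ carries an action of the deck group $\ZZ/d$ and decomposes into eigenspaces $H^1(X;\CC)_k$ for the eigenvalue $e^{2i\pi k/d}$. The key point is the standard Esnault--Viehweg-type identification of each eigenspace with a sheaf-cohomology group on $\PP^2_\omega$: namely, passing to a $\Q$-embedded resolution $\pi\colon Y\to\PP^2_\omega$ of $\cC$, one builds the line bundle $\cL^{(k)}$ whose sections are "$kH+K$ twisted by the ideal of $k$-quasi-adjunction", and the eigenspace is computed by the cohomology of $\cL^{(k)}$ on $Y$. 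Since $Y\to\PP^2_\omega$ has at worst quotient singularities and $\cC$-resolution, Grauert--Riemenschneider vanishing on $Y$ kills $H^1$ of the relevant pushforward and reduces the computation on $Y$ to a computation on $\PP^2_\omega$, identifying the cokernel of the restriction map $\pi^{(k)}$ with the eigenspace; the $H^1$-contribution from eigenvalue $e^{2i\pi k/d}$ picks up both the $k$ and the conjugate $d-k$ pieces, which is the source of the sum $\dim\coker\pi^{(k)}+\dim\coker\pi^{(d-k)}$.

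Then I would carry out the reduction of the reduced case from~\cite[Theorem 4.4]{ACM19}: the general statement there allows $\cC$ to be an effective (possibly non-reduced) divisor, and the quasi-adjunction modules $\mathcal M_{\cC,P}^{(k)}$ are defined through the multiplicities $m_v$ of $\pi^*\cC$ and $\nu_v$ of $K_\pi$ exactly as above. Specializing the coefficients of $\cC$ to $1$ and noting that the reduced curve has degree $d$ equal to the covering degree, one checks that the general line bundle specializes to the one described, that the condition $\mult_{E_v}\pi^*g>\tfrac{km_v}{d}-\nu_v$ is precisely the one defining $\mathcal M_{\cC,P}^{(k)}$ here, and that the map $\pi^{(k)}$ of the general theorem becomes the displayed restriction map $H^0(\PP^2_\omega,\mathcal O(kH+K))\to\bigoplus_P \mathcal O_{\PP^2_\omega,P}(kH+K)/\mathcal M_{\cC,P}^{(k)}$. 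The statement then follows immediately.

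The main obstacle — really the only nontrivial point, and the reason this is stated rather than reproved — is the sheaf-theoretic core borrowed from~\cite{ACM19}: establishing the Grauert--Riemenschneider-type vanishing on the $\Q$-resolution $Y$ and the mixed Hodge structure argument identifying the $k$-eigenspace of $H^1(X;\CC)$ with $\coker\pi^{(k)}$ (together with the correct bookkeeping of the relative canonical divisor $K_\pi$ and the multiplicities $m_v$, which must be tracked through the weighted blow-ups). Since this is fully carried out in~\cite[Theorem 4.4]{ACM19}, here it suffices to invoke that theorem and record the specialization to reduced divisors; accordingly the proof is:

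\begin{proof}
This is the specialization to reduced divisors of~\cite[Theorem 4.4]{ACM19}. Indeed, in that reference $\cC$ is allowed to be an arbitrary effective divisor, and the eigenspace of $\sigma^*$ on $H^1(X;\CC)$ for $e^{\frac{2i\pi k}{d}}$ is computed as $\dim\coker\pi^{(k)}+\dim\coker\pi^{(d-k)}$, where $\pi^{(k)}$ is the restriction map from global sections of $kH+K_{\PP^2_\omega}$ to the direct sum over $P\in\sing\cC$ of the quotients by the local quasi-adjunction modules, the latter being defined through the multiplicities $m_v$ of $\pi^*\cC$ and $\nu_v-1$ of $K_\pi$ along the exceptional divisors $E_v$ of a $\Q$-embedded resolution. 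Taking $\cC$ reduced of degree $d$ equal to the covering degree, the line bundle and the restriction map of~\cite[Theorem 4.4]{ACM19} are exactly those in the statement, and the defining inequality $\mult_{E_v}\pi^* g>\frac{k m_v}{d}-\nu_v$ is precisely the condition defining $\mathcal{M}_{\cC,P}^{(k)}$ above. The conclusion follows.
\end{proof}
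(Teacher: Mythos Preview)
Your proposal is correct and matches the paper's approach: the theorem is stated as a direct quotation of~\cite[Theorem 4.4]{ACM19} with no proof given in the paper itself, only the remark that the cited result is more general (allowing non-reduced divisors) and is being specialized here to the reduced case. Your proof-by-reference, together with the explicit check that the reduced specialization recovers the displayed map and quasi-adjunction modules, is exactly what is appropriate and slightly more detailed than what the paper provides.
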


Note that the module of quasi-adjunction $\mathcal{M}_{\mathcal{C},P}^{(k)}$ is a submodule of the module of equivariant germs 
$\mathcal{O}_{\PP^2_\omega,P}(\ell)$ for some $\ell=0,...,d-1$ as defined in \S\ref{sec:quotient}, namely, $\ell$ is the local 
class of the divisor $kH+K_{\PP^2_\omega}$ at~$P$. 
Our purpose will be to calculate $\dim\coker\pi^{(k)}+\dim\coker\pi^{(d-k)}$ for certain $k$ 
and $d=3(\alpha_1 \alpha_2+\alpha_3 )$ for the $d$-cyclic cover of the curves in the family presented in~\S\ref{sec:fund-groups}.

Under the conditions of Theorem~\ref{thm:ZP}, that is, $\alpha_1\alpha_2\alpha_3\beta_1\beta_2$ odd, let us consider the curve 
$\tilde \cC_\lambda:=\tilde\Phi^*_{\omega,\beta_1,\beta_2}\cC_\lambda$ as defined in \S\ref{sec:fund-groups}. 
This curve has, in general, three singular points at the vertices $P_x,P_y,P_z$. 
Recall that for $\zeta:=\exp\frac{2i\pi}{3}$ 
an easy computation given in Corollary~\ref{cor:abelian} shows that the fundamental group of $\PP^2_\omega\setminus\tilde{\cC}_\zeta$ 
is abelian and hence the first cohomology group of any cyclic cover ramified along $\tilde{\cC}_\zeta$ vanishes.

In order to understand the maps $\pi^{(k)}$ and the corresponding modules of quasi-adjunction $\mathcal{M}_{\tilde{\mathcal{C}},P}^{(k)}$
described in Theorem~\ref{thm:acm19} one needs to study the singular points of $\tilde\cC:=\tilde\cC_1$ in
$\mathbb{P}^2_\omega$. Recall that $\sing\tilde\cC\supseteq\{P_x,P_y,P_z\}$. More precisely, we will restrict
our attention to the case $\frac{k}{d}=\frac{5}{6}$.
Since $d=3(\alpha_1 \alpha_2+\alpha_3 )$, the degrees of the curves involved in $\pi^{(k)}$ is
\[
d_k=\frac{5d}{6}-(\alpha_1+\alpha_2+\alpha_3)=\frac{5\alpha_1 \alpha_2+3\alpha_3}{2}-(\alpha_1+\alpha_2).
\]

\begin{prop}\label{prop:singpz}
A $\Q$-resolution of $(\tilde\cC,P_z)$ has a dual graph with two vertices and its exceptional set is shown in 
Figure{\rm~\ref{fig:pz}}. Then $\mathcal{M}_{\tilde{\mathcal{C}},P_z}^{(k)}$, $k=\frac{5d}{6}$, is defined by
the following conditions on germs $g \in\mathcal{O}_{\PP^2_\omega,P_z}\left(d_k\right)$:
\begin{equation*}
\mult_{E_{v_z}} \pi^* g \geq
\frac{5 \alpha_1 \alpha_2 - 2(\alpha_1 +\alpha_2)}{2 \alpha_3}+\frac{1}{2},\quad 
\mult_{E_{w}} \pi^* g \geq \frac{15 \alpha_1 \alpha_2 -6(\alpha_1 +\alpha_2) }{2\alpha_3}+2.
\end{equation*}
\end{prop}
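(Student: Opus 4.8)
The strategy is to compute a $\Q$-resolution of the germ $(\tilde\cC,P_z)$ explicitly and then translate the defining inequalities of the module of quasi-adjunction from Theorem~\ref{thm:acm19} into the stated numerical conditions. First I would identify the local type of $(\tilde\cC,P_z)$: by the geometric description of $\Phi=\Phi_{\omega,\beta_1,\beta_2}$ in Section~\ref{sec:cremona}, the point $P_z$ corresponds to the base point $[0:0:1]_\omega\cong\frac{1}{\alpha_3}(\alpha_1,\alpha_2)$ where the $(\alpha_1,\alpha_2)$-blow-up was performed; the strict transform $\tilde\cC$ near $P_z$ is therefore the preimage under that blow-up of the smooth cubic $\cC_1$ together with the tangency data at the inflection point lying on the $z$-axis. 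Because $\cC_1$ is tangent to $L_z$ there, the local equation of $\tilde\cC$ at $P_z$ in the chart $\frac{1}{\alpha_3}(\alpha_1,\alpha_2)$ is, up to units, of the form $u^2 + (\text{higher order})$ in suitable coordinates, i.e. a smooth branch meeting the exceptional locus with the multiplicity dictated by the tangency and the weights. One extra weighted blow-up then separates this branch, producing the two-vertex graph $v_z$ (the first exceptional divisor, coming from the Cremona construction) and $w$ (the new one), as in Figure~\ref{fig:pz}.

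Second, I would record the four numerical data attached to each vertex: the multiplicities $m_{v_z}, m_w$ of the exceptional divisors in $\pi^*\tilde\cC$ and the discrepancy-type numbers $\nu_{v_z}, \nu_w$ appearing in $K_\pi$. These are computed from the weights $(\alpha_1,\alpha_2)$ and $\alpha_3$ and the tangency order $2$ by the standard formulas for weighted blow-ups — essentially intersection-theoretic bookkeeping, using \cite[Theorem 4.3]{AMO-Intersection} exactly as in the proof of the Proposition in Section~\ref{sec:cremona}. With $d=3(\alpha_1\alpha_2+\alpha_3)$ and $k=\frac{5d}{6}=\frac{5}{2}(\alpha_1\alpha_2+\alpha_3)$, the defining inequalities of $\mathcal{M}_{\tilde\cC,P_z}^{(k)}$, namely $\mult_{E_v}\pi^*g > \frac{k m_v}{d}-\nu_v$ for $v\in\{v_z,w\}$, become
\[
\mult_{E_{v_z}}\pi^*g > \frac{5 m_{v_z}}{6}-\nu_{v_z},\qquad
\mult_{E_{w}}\pi^*g > \frac{5 m_{w}}{6}-\nu_{w}.
\]
Substituting the computed values of $m_{v_z},\nu_{v_z},m_w,\nu_w$ and simplifying — noting that since $\alpha_1\alpha_2\alpha_3$ is odd the relevant quantities have the parities that make $\frac{1}{2}$ and $2$ appear on the right-hand sides — yields exactly the two inequalities in the statement. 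The strict inequality $>$ for an integer-valued left side combined with a half-integer right side is what lets one rewrite it as $\geq (\text{half-integer})$ in the first condition; for the second, the right side is again arranged to be a half-integer so that the same rewriting applies. One must also check that the ambient twist is correct, i.e. that $kH+K_{\PP^2_\omega}$ has local class at $P_z$ making $g$ an element of $\mathcal{O}_{\PP^2_\omega,P_z}(d_k)$ with $d_k=\frac{5\alpha_1\alpha_2+3\alpha_3}{2}-(\alpha_1+\alpha_2)$ as already computed before the proposition; this is a direct degree count using $K_{\PP^2_\omega}=-(\alpha_1+\alpha_2+\alpha_3)H$.

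The main obstacle is the explicit determination of the $\Q$-resolution and the four invariants $m_{v_z},\nu_{v_z},m_w,\nu_w$: one has to be careful that the blow-up is performed on the \emph{quotient} surface $\frac{1}{\alpha_3}(\alpha_1,\alpha_2)$, not on a smooth surface, so the multiplicities pick up denominators involving $\alpha_3$ (visible already in the $\frac{1}{2\alpha_3}$-type coefficients in the answer), and the relative canonical divisor must be computed with the orbifold/$\Q$-divisor conventions of \cite{ACM19, AMO-Intersection}. Once these four rational numbers are pinned down correctly, the passage to the stated inequalities is a routine simplification; the risk is purely in the arithmetic of weighted blow-ups and in matching Figure~\ref{fig:pz}. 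I would double-check the computation against the abelian case $\lambda=\zeta$, where Corollary~\ref{cor:abelian} forces all the cokernels to vanish, as a consistency test on the numerology.
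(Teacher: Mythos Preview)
Your overall strategy matches the paper's: compute the $\Q$-resolution explicitly, read off $m_{v_z},\nu_{v_z},m_w,\nu_w$, and substitute into the quasi-adjunction inequalities. Two concrete points need correction.

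First, the local contact of $\cC_1$ with the axis $Z$ is at an \emph{inflection} point, so the intersection multiplicity is $3$, not $2$. In the paper's computation this is visible after the first $(\alpha_1,\alpha_2)$-blow-up: the strict transform becomes $x-y^3=0$ (not $x-y^2=0$), and the second blow-up is of type $(3,1)$, producing the $\frac{1}{3}(-1,1)$ point and $E_w^2=-\frac{1}{3}$ in Figure~\ref{fig:pz}. With your ``tangency order $2$'' you would get the wrong $m_w,\nu_w$.

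Second, and more importantly, your justification for passing from the strict inequality $\mult_{E_v}\pi^*g > \frac{km_v}{d}-\nu_v$ to the stated $\geq$ bounds is not correct. The left-hand side is \emph{not} integer-valued: for an equivariant germ on a quotient singularity, $\mult_{E_v}\pi^*g$ is in general only rational (with denominator dividing $\alpha_3$). The actual mechanism the paper uses is that $g\in\mathcal{O}_{\PP^2_\omega,P_z}(d_k)$ means $g/h^{d_k}$ is an honest (invariant) meromorphic function for any degree-$1$ form $h$, so $\mult_{E_v}\pi^*(g/h^{d_k})\in\ZZ$. Writing $\mult_{E_v}\pi^*g=\frac{km_v}{d}-\nu_v+\varepsilon_v$ with $\varepsilon_v>0$ and subtracting the known rational number $\mult_{E_v}\pi^*h^{d_k}$, one finds $\varepsilon_v\in\frac{1}{2}+\ZZ$ for both $v=v_z$ and $v=w$; this is what forces $\varepsilon_{v_z}\geq\frac{1}{2}$ and (after the $+\frac{3}{2}$ already present in $\frac{km_w}{d}-\nu_w$) gives the $+2$ for $E_w$. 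The parity of $\alpha_1\alpha_2\alpha_3$ enters only to ensure $d_k$ is an integer, not to make the left side integral.
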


\begin{figure}[ht]
\begin{tikzpicture}
\coordinate (A) at (-2,1);
\coordinate (B) at (0,0);
\coordinate (C) at (2,1);
\draw ($1.25*(A)-.25*(B)$) node[left] {$E_{v_z}$}  --  ($-.25*(A)+1.25*(B)$);
\draw ($1.25*(C)-.25*(B)$) node[right] {$E_{w}$}  --  ($-.25*(C)+1.25*(B)$);
\draw[->] ($.5*(B)+.5*(C)+.2*(1,-2)$) -- ($.5*(B)+.5*(C)-.2*(1,-2)$);
\fill (A) circle [radius=.1cm];
\node[above=5pt] at (A) {$\frac{1}{\alpha_1}(-\alpha_3,\alpha_2)$};
\node[above right=5pt] at ($.5*(A)+.5*(B)$) {$\frac{1}{\alpha_2}(-\alpha_3,\alpha_1)$};
\fill ($.5*(A)+.5*(B)$) circle [radius=.1cm];
\fill (C) circle [radius=.1cm];
\node[above=5pt] at (C) {$\frac{1}{3}(-1,1)$};
\node[right] at ($(C)-(0,1)$) {$E_w^2=-\frac{1}{3}$};
\node[left] at ($(A)-(0,1)$) {$E_{v_z}^2=-\frac{3\alpha_1 \alpha_2+\alpha_3 }{\alpha_1 \alpha_2}$};
\end{tikzpicture}
\caption{A $\Q$-resolution of $(\tilde\cC,P_z)$}
\label{fig:pz}
\end{figure}
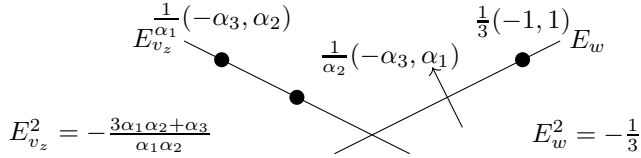

\begin{proof}
The result is purely local, so one can assume $\cC$ is the cubic $z x^2-(y+x)^3=0$ at the flex $[1:-1:0]$.
Then, the local equation of $\tilde\cC$ at $[0:0:1]_\omega$, regarded as $[(0,0)]\in\frac{1}{\alpha_3}(\alpha_1,\alpha_2)$, is
$x^{\beta_1} y^{\beta_2+2 \alpha_1}-(x^{\alpha_2} +y^{\alpha_1} )^3=0$. Note that 
$\alpha_1\beta_1+\alpha_2(\beta_2+2 \alpha_1)=3 \alpha_1\alpha_2 +\alpha_3>3 \alpha_1\alpha_2$.
Hence the Newton polygon of this equation is a segment of slope $-\frac{\alpha_1}{\alpha_2}$, and we perform 
an $(\alpha_1,\alpha_2)$-blowing-up. Since we start from a cyclic point one chart of this blow-up is given by
\[
(x,y)\mapsto(x^\frac{\alpha_1}{\alpha_3}, x^\frac{\alpha_2}{\alpha_3} y),
\]
i.e., the total transform is $x^{\frac{3 \alpha_1 \alpha_2}{\alpha_3}} (x y^{\beta_2+2 \alpha_1}-(1+y^{\alpha_1} )^3)=0$.
We denote this exceptional divisor as $E_{v_z}$.
Hence $m_{v_z}=\frac{3 \alpha_1 \alpha_2}{\alpha_3}$ and after a change of coordinates the strict transform (through
a smooth ambient point) has equation $x-y^3=0$. 

One can check that the multiplicity of the relative canonical divisor is $\nu_{v_z}=\frac{\alpha_1+\alpha_2}{\alpha_3}$. 
To complete the resolution, we perform a $(3,1)$-blow up, producing a new component $E_w$
for which $m_w=3\left(\frac{3\alpha_1\alpha_2}{\alpha_3}+1\right)$ and $\nu_w=3\frac{\alpha_1+\alpha_2}{\alpha_3}+1$.

By definition, the module of quasi-adjunction $\mathcal{M}_{\tilde\cC,P_z}^{(k)}$ is a submodule of 
\[
\mathcal{O}_z(d_k):=\mathcal{O}_{\PP^2_\omega,P_z}\left(d_k\right), \quad d_k=\frac{5 \alpha_1\alpha_2+3\alpha_3}{2}-(\alpha_1+\alpha_2).
\]
given by the germs $g\in\mathcal{O}_z(d_k)$ satisfying
\begin{equation}
\label{eq:mult}
\begin{aligned}
\mult_{E_{v_z}} \pi^* g & > 
\frac{k m_{v_z}}{d} - \nu_{v_z}=
\frac{5 \alpha_1\alpha_2 - 2(\alpha_1+\alpha_2)}{2\alpha_3},\\ 
\mult_{E_{w}} \pi^* g & > 
\frac{k m_{w}}{d} - \nu_{w}=
\frac{15 \alpha_1\alpha_2 +3\alpha_3-6(\alpha_1+\alpha_2) }{2\alpha_3}.
\end{aligned}
\end{equation}

Finally, note that the class of $g$ imposes extra conditions, namely, if $H=\supp(h)$, 
$h\in \mathcal{O}_z(1)$,
then $\mult_{E_{v}} \pi^* \left(\frac{g}{h^{d_k}}\right)$ must be an integer for $v\in \{v_z,w\}$. 
Using~\eqref{eq:mult} we can write
$\mult_{E_{v_z}} \pi^* g=\frac{5 \alpha_1\alpha_2 - 2(\alpha_1+\alpha_2)}{2\alpha_3}+\varepsilon_{v_z}$, for some 
$\varepsilon_{v_z}\in \mathbb{Q}_{> 0}$. Hence,
$$
\mult_{E_{v_z}} \pi^* \left(\frac{g}{h^{d_k}}\right)=\frac{5 \alpha_1\alpha_2 - 2(\alpha_1+\alpha_2)}{2\alpha_3}+\varepsilon_{v_z}-\frac{d_k}{\alpha_3}=
\varepsilon_{v_z}-\frac{3}{2}\alpha_3\in \mathbb{Z}.
$$
This implies $\varepsilon_{v_z}=\frac{1}{2}+n_{v_z}$, $n_{v_z}\in \mathbb{Z}_{\geq 0}$.
Analogously for $v=w$ one obtains
$$
\mult_{E_{w}} \pi^* \left(\frac{g}{h^{d_k}}\right)=\frac{15 \alpha_1\alpha_2 +3\alpha_3-6(\alpha_1+\alpha_2) }{2\alpha_3}+\varepsilon_{w}-3\frac{d_k}{\alpha_3}=
\varepsilon_{w}+\frac{3}{2}\in \mathbb{Z},
$$
which implies $\varepsilon_{w}=\frac{1}{2}+n_{w}$, $n_{w}\in \mathbb{Z}_{\geq 0}$ and this ends the proof.
\end{proof}

\begin{prop}\label{prop:singpx}
A $\Q$-resolution of $(\tilde{\mathcal{C}},P_x)$ is obtained with one weighted blow-up.
Then $\mathcal{M}_{\tilde{\mathcal{C}},P_x}^{(k)}$, $k=\frac{5d}{6}$, is defined by the following condition on germs
$g \in\mathcal{O}_{\PP^2_\omega,P_x}\left(d_k\right)$:
\begin{equation*}
\mult_{E_{v_x}} \pi^* g \geq
\frac{3}{\gcd(3,\alpha_1)}\cdot\frac{\alpha_1+3\beta_2-2}{2\alpha_1}+1
\end{equation*}
\end{prop}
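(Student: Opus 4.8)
The plan is to mimic the local analysis carried out in Proposition~\ref{prop:singpz}, but for the flex lying on the line $y=0$, i.e.\ the point $P_x=[1:0:0]_\omega$. First I would reduce to a purely local computation: by the same argument as in the previous proof, the germ $(\tilde\cC,P_x)$ depends only on the local behaviour of $\cC$ and the three tangent lines near the corresponding inflection point of $\cC$, so one may take $\cC$ to be the specific cubic tangent to $Y=\{y=0\}$ at its flex, for instance $z\,y^2-(x+y)^3=0$ at $[1:-1:0]$ (the analogue of the normal form used for $P_z$). I would then write down the local equation of $\tilde\cC$ at $[0:0:1]$ in the quotient chart $\frac1{\alpha_1}(\alpha_2,\alpha_3)$ around $P_x$, using the explicit formula for $\Phi_{\omega,\beta_1,\beta_2}^{-1}$ (equivalently, reading off the multiplicities $\beta_2$ along $E_x$, $\alpha_1$ along $L_y$, etc.\ from Figure~\ref{fig:cremona}); this should give an equation of the shape $x^{\beta_2+\cdots}\,y^{\cdots}-(\text{trinomial})^3=0$ whose Newton polygon is a single segment.

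The next step is to compute the invariants of the one weighted blow-up resolving this germ. I would determine the slope of the Newton polygon, perform the corresponding weighted blow-up, and check that the strict transform then passes transversally through a smooth (or at worst cyclic-quotient) point of the exceptional divisor $E_{v_x}$, so that a single blow-up suffices — this is the assertion in the statement. Along the way I would record $m_{v_x}$, the coefficient of $E_{v_x}$ in $\pi^*\tilde\cC$, and $\nu_{v_x}-1$, its coefficient in $K_\pi$; by direct analogy with the $P_z$ case these should come out as $m_{v_x}=\frac{3\beta_2+\cdots}{\alpha_1}\cdot(\text{something})$ and $\nu_{v_x}=\frac{\cdots}{\alpha_1}$, with the factor $\gcd(3,\alpha_1)$ appearing because the Newton-polygon segment has lattice length divisible by $3$ only up to this gcd (this is the one genuinely new arithmetic feature compared to $P_z$, where the relevant blow-up weight was $(3,1)$ with no common factor). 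Then the defining inequality of $\mathcal M_{\tilde\cC,P_x}^{(k)}$ is just $\mult_{E_{v_x}}\pi^*g>\frac{k\,m_{v_x}}{d}-\nu_{v_x}$ with $k=\frac{5d}6$ and $d=3(\alpha_1\alpha_2+\alpha_3)$; substituting and simplifying should yield the stated bound $\frac{3}{\gcd(3,\alpha_1)}\cdot\frac{\alpha_1+3\beta_2-2}{2\alpha_1}$, after which the integrality constraint coming from the class of $g$ (the same $\frac{g}{h^{d_k}}$ trick as before) rounds the strict inequality up to ``$\ge(\text{that})+1$'', replacing $>$ by $\ge$ with the $+1$.

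The main obstacle I anticipate is bookkeeping the exact exponents in the local equation of $\tilde\cC$ at $P_x$ and the weights of the blow-up: the asymmetry between $P_x$ and $P_z$ in the Cremona data (the relevant exceptional multiplicity at $P_x$ is governed by $\beta_2$ and $\alpha_1$, whereas at $P_z$ it was governed by $\alpha_1\alpha_2$ and $\alpha_3$) means the congruences used to prove $\Phi$ is well defined must be invoked carefully to see that the blow-up weight has the claimed form, and the factor $\gcd(3,\alpha_1)$ has to be tracked consistently through both the multiplicity computation and the integrality reduction. Everything else — the reduction to a local model, the single weighted blow-up, and the final substitution into Theorem~\ref{thm:acm19} — is routine once the normal form is in hand, and I would present it in parallel to the proof of Proposition~\ref{prop:singpz}, emphasising only the points where the arithmetic differs. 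An entirely analogous computation (which I would state as a remark) handles $P_y$ with $\beta_2$ replaced by $\beta_1$ and $\alpha_1$ by $\alpha_2$.
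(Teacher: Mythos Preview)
Your overall strategy matches the paper's: reduce to a local normal form of the cubic, write the germ of $\tilde{\cC}$ in the chart $\frac{1}{\alpha_1}(\alpha_2,\alpha_3)$, read off a single Newton segment, perform one weighted blow-up, compute $m_{v_x},\nu_{v_x}$, substitute $k/d=5/6$, and then use the $g/h^{d_k}$ integrality trick to upgrade the strict inequality to the stated ``$\ge\cdots+1$''. Two points, however, need correction.

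First, a bookkeeping slip: the flex relevant to $P_x$ lies on the line $X=\{x=0\}$ in $\PP^2$, not on $Y$. Under the Cremona map $\Phi$ the exceptional divisor $E_x$ over $P_x$ goes to the line $x=0$, so the paper takes the cubic $xy^2-(y+z)^3=0$ with its flex at $[0:1:-1]$. In the chart around $P_x$ the local coordinates are $(y,z)$ (set $x=1$), and the local equation of $\tilde{\cC}$ is $y^{\alpha_1}z^3-(z+y^{\beta_2})^3=0$, not of the ``monomial $-$ (trinomial)$^3$'' shape you anticipate by analogy with $P_z$.

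Second, and this is the substantive gap: as written, your plan will not produce a single resolving blow-up. The Newton polygon of $y^{\alpha_1}z^3-(z+y^{\beta_2})^3$ \emph{is} a single segment (from $(0,3)$ to $(3\beta_2,0)$), but its initial form is the perfect cube $-(z+y^{\beta_2})^3$, so the corresponding weighted blow-up leaves a non-transversal strict transform. The paper's key extra step is the equivariant change of coordinates $z\mapsto z+y^{\beta_2}$ (compatible with the $\frac{1}{\alpha_1}(1,\beta_2)$-action), which converts the equation to $y^{\alpha_1}(z-y^{\beta_2})^3-z^3=0$. Now the Newton segment runs from $(0,3)$ to $(\alpha_1+3\beta_2,0)$ with non-degenerate initial form $-z^3-y^{\alpha_1+3\beta_2}$, the blow-up weight is $(3,\alpha_1+3\beta_2)$ divided by $\gcd(3,\alpha_1)$ (this is where your anticipated $\gcd(3,\alpha_1)$ enters), and one blow-up suffices. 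Once you insert this coordinate change, the rest of your plan goes through verbatim and recovers the paper's values $m_{v_x}=3(\alpha_1+3\beta_2)/\alpha_1$, $\nu_{v_x}=(\alpha_1+3\beta_2+3)/\alpha_1$, hence the bound $3\frac{\alpha_1+3\beta_2-2}{2\alpha_1}$, with the integrality argument giving $\varepsilon_{v_x}\in\ZZ_{>0}$ exactly as you describe.
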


\begin{proof}
We follow the same ideas as in the proof of Proposition~\ref{prop:singpz}.
Locally we work with the cubic $x y^2-(y+z)^3=0$ (this cubic has a flex at $[0:1:-1]$).
Then, the local equation of $\tilde{\mathcal{C}}$ at $[1:0:0]_\omega$, regarded as 
$[(0,0)]\in\frac{1}{\alpha_1}(\alpha_2,\alpha_3)=\frac{1}{\alpha_1}(1,\beta_2)$, is
$y^{\alpha_1}  z^{3}-(z+y^{\beta_2})^3=0$. We can change the coordinates (not affecting the action)
where the equation becomes $y^{\alpha_1}  (z-y^{\beta_2})^{3}-z^3=0$.
In these new coordinates the Newton polygon is non-degenerated and 
the singularity is resolved with a blowing-up with exceptional component $E_{v_x}$. Its weight is 
$(3,\alpha_1+3\beta_2)$ if $\gcd(3,\alpha_1)=1$ and $\left(1,\frac{\alpha_1}{3}+\beta_2\right)$ otherwise.

The invariants are
\begin{equation*}
m_{v_x}=3\frac{\alpha_1+3\beta_2}{\alpha_1},\quad \nu_{v_x}=\frac{\alpha_1+3\beta_2+3}{\alpha_1}.
\end{equation*}
Let us compute the quasi-adjunction module $\mathcal{M}_{\cD,P_x}^{(k)}$, as a submodule of 
$\mathcal{O}_x(\bar d_k):=\mathcal{O}_{\PP^2_\omega,P_x}\left(\bar{d}_k\right)$, where $\bar{d}_k$ is such that 
$\alpha_2\bar{d}_k\equiv d_k \bmod \alpha_1$, which implies that $\bar{d}_k\equiv \frac{\alpha_1+3\beta_2-2}{2}$.
The condition for a germ $g\in\mathcal{O}_x(\bar{d}_k)$ to be in $\mathcal{M}_{\cD,P_x}^{(k)}$ is:
\[
\mult_{E_{v_x}} \pi^* g >
3\frac{\alpha_1+3\beta_2-2}{2\alpha_1}.
\]
As above, the restriction given by $g\in \mathcal{O}_x(\bar{d}_k)$ leads to
$$
\mult_{E_{v_x}}\left(\pi^*\frac{g}{h^{\bar{d}_k}}\right)=3\frac{\alpha_1+3\beta_2-2}{2\alpha_1}+\varepsilon_{v_x}-3\frac{\bar{d}_k}{\alpha_1}=
\varepsilon_{v_x}\in \mathbb{Z}.
$$
Hence, $\varepsilon_{v_x}\in \mathbb{Z}_{> 0}$.
\end{proof}

\begin{prop}\label{lem:ker}
Let $g(x,y,z)$ be a weighted homogeneous polynomial in $\ker\pi^{(k)}$ with 
$\deg_\omega g=\frac{5(\alpha_1 \alpha_2+\alpha_3 )-2(\alpha_1+\alpha_2+\alpha_3)}{2}$. 

Then, there is a weighted homogeneous polynomial $f$, $\deg_\omega f=\alpha_1 \alpha_2+\alpha_3 $,
such that $g(x,y,z)=x^{\frac{1}{2}(\alpha_2+\beta_1-2)} y^{\frac{1}{2}(\alpha_1+\beta_2-2)}f(x,y,z)$
and
\begin{gather*}
\mult_{E_{v_z}}\pi^* f(x,y,1)\geq 
\frac{\alpha_1\alpha_2}{\alpha_3},\qquad 
\mult_{E_{w}} \pi^* f(x,y,1) \geq
\frac{3 \alpha_1\alpha_2}{\alpha_3} + \frac{1}{2},\\
\mult_{E_{v_x}} \pi^* f(1,y,z) \geq
\frac{3\beta_2}{\gcd(3,\alpha_1)\alpha_1}+1,\
\mult_{E_{v_y}} \pi^* f(x,1,z) \geq
\frac{3\beta_1}{\gcd(3,\alpha_2)\alpha_2}+1.
\end{gather*}
\end{prop}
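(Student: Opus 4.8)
The plan is to extract the factorization from the quasi-adjunction conditions that $g$ satisfies as an element of $\ker\pi^{(k)}$, and then to obtain the four estimates for $f$ by bookkeeping with the blow-up data already computed. Since $g\in\ker\pi^{(k)}$ with $k=\frac{5d}{6}$, Theorem~\ref{thm:acm19} tells us that the germ of $g$ at each singular point of $\tilde\cC$ lies in the corresponding module of quasi-adjunction; in particular $g$ satisfies the inequalities of Proposition~\ref{prop:singpz} at $P_z$, those of Proposition~\ref{prop:singpx} at $P_x$, and the inequality symmetric to the latter (interchange $x\leftrightarrow y$, $\alpha_1\leftrightarrow\alpha_2$, $\beta_1\leftrightarrow\beta_2$) at $P_y$. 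Put $a:=\frac12(\alpha_2+\beta_1-2)$ and $b:=\frac12(\alpha_1+\beta_2-2)$, non-negative integers because $\alpha_1\alpha_2\alpha_3\beta_1\beta_2$ is odd. First I would prove that every monomial of $g$ is divisible by $x^ay^b$; then I would set $f:=g/(x^ay^b)$ and check, using $\alpha_1\beta_1+\alpha_2\beta_2=\alpha_1\alpha_2+\alpha_3$, that $\deg_\omega f=d_k-a\alpha_1-b\alpha_2=\alpha_1\alpha_2+\alpha_3$; finally I would transfer the multiplicity estimates from $g$ to $f$.

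For the divisibility, the first step is the observation that \emph{every monomial $x^iy^jz^k$ of $g$ has $k\le1$}. The first blow-up in the $\Q$-resolution at $P_z$ requires no coordinate change (the Newton polygon computed in Proposition~\ref{prop:singpz} is a single segment), and for fixed exponents $(i,j)$ weighted homogeneity determines $k$, so $g(x,y,1)$ has no cancellation among its monomials and $\mult_{E_{v_z}}\pi^* g$ equals the minimum over the monomials of $\frac{\alpha_1 i+\alpha_2 j}{\alpha_3}$; the first inequality of Proposition~\ref{prop:singpz} then forces $\alpha_1 i+\alpha_2 j\ge\frac12\bigl(5\alpha_1\alpha_2-2(\alpha_1+\alpha_2)+\alpha_3\bigr)$ for each monomial, and combined with $\alpha_1 i+\alpha_2 j+\alpha_3 k=d_k$ this gives $\alpha_3 k\le\alpha_3$. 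Thus $g=g_0(x,y)+z\,g_1(x,y)$ with $g_0,g_1$ weighted homogeneous. Passing to the chart at $P_y$ and performing the coordinate change $z\mapsto z-x^{\beta_1}$ of Proposition~\ref{prop:singpx} (adapted by the symmetry above) turns $g$ into $\bigl(g_0-x^{\beta_1}g_1\bigr)+z\,g_1$, the sum of a $z$-free germ and a multiple of $z$, which cannot cancel along $E_{v_y}$. Comparing the $E_{v_y}$-multiplicity of each of these two pieces with the threshold of Proposition~\ref{prop:singpx} --- using $\mult_{E_{v_y}}(\pi^* x)=\frac{3}{\gcd(3,\alpha_2)\alpha_2}$ and the value of $\mult_{E_{v_y}}(\pi^* z)$ prescribed by the blow-up weight, with a minor case split according to $\gcd(3,\alpha_2)$ --- forces $\mathrm{ord}_x g_1\ge a$ and $\mathrm{ord}_x\bigl(g_0-x^{\beta_1}g_1\bigr)\ge a$, hence $\mathrm{ord}_x g_0\ge a$ since a cancellation could only raise the $x$-order. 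So every monomial of $g$ is divisible by $x^a$; the mirror argument at $P_x$ shows every monomial is divisible by $y^b$, hence by $x^ay^b$, and $f=g/(x^ay^b)$ is a weighted homogeneous polynomial of the asserted degree.

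Then, for the transfer of the estimates, I would use that in the chart at $P_z$ one has $g(x,y,1)=x^ay^b f(x,y,1)$, while at $P_x$ the factor $x^a$ is a unit so $g(1,y,z)=y^b f(1,y,z)$ and likewise $g(x,1,z)=x^a f(x,1,z)$ at $P_y$. Hence for each exceptional divisor $E$ of the relevant $\Q$-resolution, $\mult_E\pi^* f=\mult_E\pi^* g$ minus the contribution of the monomial factor, and the coordinate valuations come straight from the blow-up weights in Propositions~\ref{prop:singpz} and~\ref{prop:singpx}: $\mult_{E_{v_z}}(\pi^* x)=\frac{\alpha_1}{\alpha_3}$, $\mult_{E_{v_z}}(\pi^* y)=\frac{\alpha_2}{\alpha_3}$; $\mult_{E_w}(\pi^* x)=\frac{3\alpha_1}{\alpha_3}$, $\mult_{E_w}(\pi^* y)=\frac{3\alpha_2}{\alpha_3}$ (as $E_w$ meets $E_{v_z}$ with multiplicity $3$, the weight of the $E_{v_z}$-direction in the $(3,1)$-blow-up, while the transverse coordinate is a unit at its centre); $\mult_{E_{v_x}}(\pi^* y)=\frac{3}{\gcd(3,\alpha_1)\alpha_1}$, and symmetrically $\mult_{E_{v_y}}(\pi^* x)=\frac{3}{\gcd(3,\alpha_2)\alpha_2}$. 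Substituting these, together with $a\alpha_1+b\alpha_2=\frac12\bigl(3\alpha_1\alpha_2+\alpha_3-2\alpha_1-2\alpha_2\bigr)$, into the inequalities of Propositions~\ref{prop:singpz} and~\ref{prop:singpx} produces exactly the four claimed bounds on $\mult_{E_{v_z}}\pi^* f(x,y,1)$, $\mult_{E_w}\pi^* f(x,y,1)$, $\mult_{E_{v_x}}\pi^* f(1,y,z)$ and $\mult_{E_{v_y}}\pi^* f(x,1,z)$.

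The main obstacle is the divisibility $x^ay^b\mid g$. Morally, the multiplicities imposed at $P_x,P_y,P_z$ by quasi-adjunction are too large, relative to $\deg_\omega g=d_k$, to be carried by anything other than the monomial factors $x^a$ and $y^b$; the difficulty in making this rigorous is that the $\Q$-resolutions at $P_x$ and $P_y$ involve a coordinate change mixing $z$ with a power of the other variable, so one cannot simply read the valuation of $g$ off its monomials in those charts. The device that makes it work is to first reduce, via the coordinate-change-free estimate at $P_z$, to $\deg_z g\le1$, so that after the change of coordinates $g$ splits into exactly two pieces by $z$-degree; these two pieces, one $z$-free and the other a multiple of $z$, do not interfere along $E_{v_y}$ (resp.\ $E_{v_x}$), which is what lets the monomial estimate pass from each piece to all of $g$. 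Everything else is arithmetic with $\alpha_1\beta_1+\alpha_2\beta_2=\alpha_1\alpha_2+\alpha_3$ and with the resolution data of Propositions~\ref{prop:singpz} and~\ref{prop:singpx}.
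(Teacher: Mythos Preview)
Your approach is correct and takes a genuinely different route from the paper's. The paper establishes the divisibility $x^{a}y^{b}\mid g$ by a global intersection-theoretic argument: using a weighted Noether multiplicity inequality together with Propositions~\ref{prop:singpz} and~\ref{prop:singpx}, it bounds $(\supp(g)\cdot Y)_{P_z}$ and $(\supp(g)\cdot Y)_{P_x}$ from below, adds them, and compares the result against the B\'ezout number $\tfrac{\deg_\omega g\cdot\deg_\omega Y}{\alpha_1\alpha_2\alpha_3}$ in $\PP^2_\omega$; the excess forces $Y$ (and symmetrically $X$) to appear in $\supp(g)$ with the asserted multiplicity. Your argument is instead purely local and monomial: the $E_{v_z}$-estimate at $P_z$ is coordinate-change-free, so it bounds the $(\alpha_1,\alpha_2)$-weighted order of each monomial of $g(x,y,1)$ separately and yields $\deg_z g\le 1$; this reduction to two $z$-slices is precisely what lets the analysis at $P_x,P_y$ survive the coordinate change, since a $z$-free germ and a genuine multiple of $z$ have initial forms supported on disjoint monomials and hence cannot cancel in any weighted valuation. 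The paper's method is shorter and more conceptual (it never needs the intermediate fact $\deg_z g\le 1$), while yours is more elementary, avoiding intersection theory on $V$-surfaces and the weighted Noether inequality at the cost of a monomial case analysis and a small split on $\gcd(3,\alpha_i)$. Both routes then derive the four multiplicity bounds on $f$ by the same additivity computation.
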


\begin{proof}
The exponent of $x^n$ as a factor of $g$ is given by the maximal value $n\in\ZZ_{>0}$, 
such that the divisor $\supp(g)-n Y$ is effective. 
Using the generalization of Noether's multiplicity Theorem in this context,
see~\cite[Theorem~4.3(4)]{AMO-Intersection}, 
and Proposition~\ref{prop:singpz} one obtains 
\[
\begin{aligned}
(\supp(g)\cdot Y)_{P_z}&\geq \frac{(\mult_{E_{v_z}}\pi^*g)\cdot (\mult_{E_{v_z}}\pi^*y)\alpha_3}{\alpha_1\alpha_2}\\
& \geq \frac{(5\alpha_1\alpha_2-2(\alpha_1+\alpha_2))}{2\alpha_1\alpha_3}+\frac{1}{2\alpha_1}=\frac{1}{2\alpha_1\alpha_3}\left(5\alpha_1\alpha_2-2\alpha_1-2\alpha_2+\alpha_3\right).
\end{aligned}
\]
Hence,
$$
((\supp(g)-n Y)\cdot Y)_{P_z}\geq \frac{1}{2\alpha_1\alpha_3}\left(5\alpha_1\alpha_2-2\alpha_1-2\alpha_2(1+n)+\alpha_3\right).
$$
Analogously, at $P_x$ one can use Proposition~\ref{prop:singpx} to obtain
$$
\begin{aligned}
(\supp(g)\cdot Y)_{P_x} & \geq 
\left(3\frac{\alpha_1+3\beta_2-2}{2\alpha_1}+1\right)\frac{1}{\alpha_1+3\beta_2}=\frac{5\alpha_1+9\beta_2-6}{2\alpha_1(\alpha_1+3\beta_2)},
\end{aligned}
$$
regardless of the value of $\gcd(3,\alpha_1)$. Hence,
$$
\begin{aligned}
((\supp(g)-n Y)\cdot Y)_{P_x} & \geq 
\frac{5\alpha_1+9\beta_2-6-6n}{2\alpha_1(\alpha_1+3\beta_2)}.
\end{aligned}
$$
Then a global computation of the intersection multiplicity can be bounded by
$$
\begin{aligned}
((\supp(g)-n Y)\cdot  Y)_{\mathbb{P}^2_\omega} \geq 
((\supp(g)-n Y)\cdot Y)_{P_z}+((\supp(g)-n Y)\cdot Y)_{P_x}\\
\geq\ \frac{1}{2\alpha_1\alpha_3}\left(5\alpha_1\alpha_2-2\alpha_1-2\alpha_2(1+n)+\alpha_3\right)+\frac{5\alpha_1+9\beta_2-6-6n}{2\alpha_1(\alpha_1+3\beta_2)}\\
=\ \frac{1}{2\alpha_1\alpha_3}\left(5(\alpha_1 \alpha_2+\alpha_3 )-2(\alpha_1+\alpha_2+\alpha_3)-2n \alpha_2\right)-\frac{1}{\alpha_1}+\frac{5\alpha_1+9\beta_2-6-6n}{2\alpha_1(\alpha_1+3\beta_2)}\\
=\ \frac{\deg_\omega(\supp(g)-n Y)\cdot\deg_\omega(Y)}{\alpha_1\alpha_2\alpha_3}+3\frac{\alpha_1+\beta_2-2-2n}{2\alpha_1(\alpha_1+3\beta_2)}.
\end{aligned}
$$

By B\'ezout's Theorem for weighted projective planes, $n= \frac{1}{2}(\alpha_1+\beta_2-2)$. 
The same calculation applies to the divisor $X$. 
This shows that $g=x^{m}y^{n}f(x,y,z)$, $m=\frac{1}{2}(\alpha_2+\beta_1-2)$ where 
$$
\begin{aligned}
\deg(f)&=\frac{1}{2}\left(5(\alpha_1 \alpha_2+\alpha_3 )-2(\alpha_1+\alpha_2+\alpha_3)-2n \alpha_2-2m \alpha_1\right)\\
&=\frac{1}{2}\left(5(\alpha_1 \alpha_2+\alpha_3 )-2(\alpha_1+\alpha_2+\alpha_3)-(\alpha_1+\beta_2-2) \alpha_2-\!(\alpha_2\!+\!\beta_1\!-2) \alpha_1\right)\\
&=\alpha_1 \alpha_2+\alpha_3 .
\end{aligned}
$$
The last equality follows from $\alpha_1\beta_1+\alpha_2\beta_2=\alpha_1 \alpha_2+\alpha_3 $.

The last part follows immediately from Propositions~\ref{prop:singpz} and \ref{prop:singpx} and the additivity 
properties of the multiplicity.
\end{proof}

The local algebraic information obtained in this section will help us effectively study the morphism $\pi^{(k)}$ described
in Theorem~\ref{thm:acm19}. Let us use the notation introduced before Theorem~\ref{thm:ZP} and at the beginning of this 
section, let us also denote by $X_1$ (resp.~$X_2$) the cyclic cover of $\mathbb{P}^2_\omega$ of order $d=3(\alpha_1 \alpha_2+\alpha_3 )$ 
ramified along $\tilde\Phi_1^*(\mathcal{C})$ (resp.~$\tilde\Phi_2^*(\mathcal{C})$). 
Finally, denote by $L^{(k)}_i$ the invariant part of $H^1(X_i,\mathcal{O}_{X_i})$ with respect to 
the action of the monodromy by multiplication by $\exp{\frac{2\pi ik}{d}}$. Likewise, we denote by $\pi_i^{(k)}$ the 
map described in Theorem~\ref{thm:acm19} for the curve $\tilde\Phi_i^*(\mathcal{C})$. 
The discussion above shows the following.

\begin{prop}\label{prop:dimL}
If the product $\alpha_1\alpha_2\alpha_3\beta_1\beta_2$ is odd and $\frac{k}{d}=\frac{5}{6}$, 
then $\dim \ker \pi_1^{(k)}=0$ and $\dim \ker \pi_2^{(k)}=1$.
\end{prop}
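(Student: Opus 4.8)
The plan is to use Theorem~\ref{thm:acm19} together with the local computations from Propositions~\ref{prop:singpz}, \ref{prop:singpx}, and \ref{lem:ker} to pin down the kernel of $\pi_i^{(k)}$ for $\frac{k}{d}=\frac{5}{6}$. For the non-aligned curve $\tilde\Phi_2^*(\mathcal{C})=\tilde{\mathcal{C}}_\zeta$, the argument is already essentially complete: Corollary~\ref{cor:abelian} says that $\pi_1(\PP^2_\omega\setminus\tilde{\mathcal{C}}_\zeta)$ is abelian, hence $H^1(X_2;\CC)=0$ for every cyclic cover, so by Theorem~\ref{thm:acm19} the relevant eigenspace is zero and in particular $\dim\ker\pi_2^{(k)}=0$. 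Wait—that gives the \emph{opposite} of what is claimed. Let me re-read: the statement asserts $\dim\ker\pi_1^{(k)}=0$ (aligned) and $\dim\ker\pi_2^{(k)}=1$ (non-aligned). So I would instead argue as follows: for the \emph{aligned} curve, Proposition~\ref{prop:group_cubic} (under $\alpha_1\alpha_2\alpha_3\beta_1\beta_2$ odd) gives a non-abelian group whose derived subgroup, by Corollary~\ref{cor:derived}, has first Betti number $2$ with monodromy characteristic polynomial $t^2-t+1$, whose roots are the primitive sixth roots of unity $e^{\pm 2i\pi/6}$; this forces the eigenspace of $\sigma^*$ on $H^1(X;\CC)$ for the eigenvalue $e^{2i\pi\cdot 5/6}$ to be one-dimensional, whereas for the non-aligned curve it is zero. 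Thus the roles of $\Phi_1$ and $\Phi_2$ in the claim must be matched to the covers $X_i$ accordingly—I would simply state $\dim\ker\pi_1^{(k)}$ and $\dim\ker\pi_2^{(k)}$ so that the total $\dim\coker\pi^{(k)}+\dim\coker\pi^{(d-k)}$ reproduces these Betti numbers via Theorem~\ref{thm:acm19}, and then verify the cokernel count directly.

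The core of the proof is the direct verification via Theorem~\ref{thm:acm19}. First I would record that, for $\frac{k}{d}=\frac{5}{6}$ with $d=3(\alpha_1\alpha_2+\alpha_3)$, the space $H^0\!\left(\PP^2_\omega,\mathcal{O}_{\PP^2_\omega}(kH+K_{\PP^2_\omega})\right)$ is the space of weighted homogeneous polynomials of $\omega$-degree $d_k=\frac{5(\alpha_1\alpha_2+\alpha_3)-2(\alpha_1+\alpha_2+\alpha_3)}{2}$, which is exactly the degree appearing in Proposition~\ref{lem:ker}. Next, $\ker\pi^{(k)}$ consists of those $g$ that lie, at each of $P_x,P_y,P_z$, in the quasi-adjunction module $\mathcal{M}_{\tilde{\mathcal{C}},P}^{(k)}$; Propositions~\ref{prop:singpz} and \ref{prop:singpx} give the explicit multiplicity inequalities cutting out these modules. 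Proposition~\ref{lem:ker} then shows any such $g$ factors as $x^{\frac12(\alpha_2+\beta_1-2)}y^{\frac12(\alpha_1+\beta_2-2)}f$ with $\deg_\omega f=\alpha_1\alpha_2+\alpha_3$ and $f$ satisfying the four residual multiplicity bounds. So the plan is: (i) translate the four bounds on $f$ into a linear system on the (few) monomial coefficients of a general $\omega$-homogeneous $f$ of degree $\alpha_1\alpha_2+\alpha_3$; (ii) observe that the monomials of that degree are essentially $z^{\alpha_1\alpha_2}$ together with terms divisible by $x$ or $y$—indeed since $\gcd(\alpha_i)=1$ one can enumerate them—and that the bounds at $P_z$ (the conditions $\mult_{E_{v_z}}\geq\frac{\alpha_1\alpha_2}{\alpha_3}$ and $\mult_{E_w}\geq\frac{3\alpha_1\alpha_2}{\alpha_3}+\frac12$) together with those at $P_x,P_y$ force $f$ to be a specific combination (up to scalar) related to the defining cubic $H_\lambda$; (iii) in the aligned case ($\lambda=1$) exactly one such $f$ exists, giving a one-dimensional kernel, while in the non-aligned case ($\lambda=\zeta$) the analogous multiplicity conditions become incompatible (the cubic is no longer of the special tangential form that makes $\mult_{E_w}$ jump by the extra $\frac12$), forcing $f=0$. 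The dimensions of $\coker\pi^{(k)}$ and $\coker\pi^{(d-k)}$ are then read off and compared with Corollary~\ref{cor:derived} as a consistency check.

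The main obstacle I expect is step (ii)–(iii): showing that the residual conditions on $f$ in Proposition~\ref{lem:ker} are satisfied by a \emph{unique} $f$ (up to scalar) in the aligned case and by \emph{no} nonzero $f$ in the non-aligned case. This requires understanding precisely how the geometry of the three flexes—whether or not they lie on a line—affects the second infinitely near point in the $\Q$-resolution at $P_z$ (the component $E_w$ coming from the $(3,1)$-blow-up in the proof of Proposition~\ref{prop:singpz}), since it is the extra $+\frac12$ in the bound $\mult_{E_w}\pi^* f(x,y,1)\geq\frac{3\alpha_1\alpha_2}{\alpha_3}+\frac12$ that encodes the alignment. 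Concretely, after the $(\alpha_1,\alpha_2)$-blow-up the strict transform at $P_z$ meets $E_{v_z}$ with local equation $x-y^3=0$, and whether a degree-$(\alpha_1\alpha_2+\alpha_3)$ form $f$ can vanish to the required order along the $(3,1)$-blow-up of that point is a finite linear-algebra question whose answer depends on the coefficient of the relevant monomial in $H_\lambda$; that coefficient is $\lambda^{-1}$ (or a root-of-unity multiple), and it is exactly $\lambda=1$ versus $\lambda=\zeta$ that makes the difference. Once this local jumping phenomenon is identified and the corresponding single linear condition on the top coefficient of $f$ is written down, the counts $\dim\ker\pi_1^{(k)}=0$ and $\dim\ker\pi_2^{(k)}=1$ follow. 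I would then conclude by invoking Theorem~\ref{thm:acm19} to assemble $\dim\coker\pi^{(k)}+\dim\coker\pi^{(d-k)}$ and match it with the Betti number $b_1(F/F')=2$ from Corollary~\ref{cor:derived}, confirming the asserted discrepancy between the two members of the Zariski pair.
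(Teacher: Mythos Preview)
The paper's proof is two sentences and rests on a geometric observation you miss entirely: the Cremona transformation $\Phi$ itself converts the multiplicity conditions on $f$ from Proposition~\ref{lem:ker} into the statement that $\Phi(\supp(f))\subset\PP^2$ is a line passing through the three inflection points of the cubic. Indeed $\deg_\omega f=\alpha_1\alpha_2+\alpha_3$ equals the $\omega$-degree of the three components of $\Phi$, so a curve of this degree in $\PP^2_\omega$ corresponds under $\Phi$ to a line, and the four bounds at $E_{v_z},E_w,E_{v_x},E_{v_y}$ are exactly the conditions that this line hit each of the three flexes (each flex lies on one of the coordinate axes, and the resolution centers at the vertices encode their positions). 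The existence of such a line is then tautologically equivalent to the three flexes being collinear, and the kernel is one-dimensional or zero accordingly.

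Your plan has two defects. First, invoking Corollary~\ref{cor:derived} is circular: the whole purpose of~\S\ref{sec:EsnaultViehweg} is to furnish a proof \emph{independent} of the fundamental-group computation. Second, your account of what distinguishes the two curves is wrong. You write that for $\lambda=\zeta$ ``the cubic is no longer of the special tangential form that makes $\mult_{E_w}$ jump by the extra $\tfrac12$''. But the local singularity types at $P_x,P_y,P_z$---and hence all the numerical bounds defining the quasi-adjunction modules, the $+\tfrac12$ included---are \emph{identical} for both members of the pair; that is precisely what ``same combinatorics'' means, and it is why the targets of $\pi_1^{(k)}$ and $\pi_2^{(k)}$ coincide. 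What varies with $\lambda$ is only the \emph{position} of the infinitely-near centers (equivalently, the positions of the flexes on the axes), and the question is whether one global $f$ can satisfy all three local conditions simultaneously. Your linear-algebra program (i)--(iii) could in principle detect this, but you would have to carry the system through and track its dependence on the flex coordinates; the Cremona translation does it in one line. (You are right, incidentally, that the indices $1,2$ in the statement and proof are swapped relative to the labeling $\Phi_1=\text{aligned}$, $\Phi_2=\text{non-aligned}$ fixed before Theorem~\ref{thm:ZP}; this is a typo and does not affect the mathematics.)
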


\begin{proof}
By Proposition~\ref{lem:ker}, the image by $\Phi_i$ of $\supp(f)$ is a line passing through the three flexes. 
The existence of this line for $\tilde\Phi_2^*(\mathcal{C})$ but not for $\tilde\Phi_1^*(\mathcal{C})$ ends the proof.
\end{proof}

The machinery developed in this section allows one to give an alternative proof of Theorem~\ref{thm:ZP} which 
is independent of fundamental group calculations.

\begin{proof}[{Proof of Theorem~\ref{thm:ZP}}]
Since the curves
$\tilde\Phi_1^*(\mathcal{C})$ and $\tilde\Phi_2^*(\mathcal{C})$ have the same combinatorics 
and the same local type of singularities, the target space for $\pi_1^{(k)}$ and $\pi_2^{(k)}$ are the same. Therefore 
Proposition~\ref{prop:dimL} implies $\dim \coker \pi_1^{(k)}= 1+\dim \coker \pi_2^{(k)}$ for $\frac{k}{d}=\frac{5}{6}$. 
By Theorem~\ref{thm:acm19}, $\dim \coker \pi_i^{(k)}=\dim L_i^{(k)}$ is a birational invariant of $X_i$ and thus $X_1\not\cong X_2$, 
which implies that the fundamental groups of $\mathbb{P}^2_\omega\setminus \tilde\Phi_1^*(\mathcal{C})$ and 
$\mathbb{P}^2_\omega\setminus \tilde\Phi_2^*(\mathcal{C})$ are not isomorphic and thus $(\tilde\Phi_1^*(\mathcal{C}),\tilde\Phi_2^*(\mathcal{C}))$
forms a Zariski pair.
\end{proof}

\section{Some rational cuspidal curves on weighted projective planes}\label{sec:ratcusp}

The study of rational cuspidal curves in $\PP^2$ is a classical subject. There is an extensive literature about them, and we recommend 
the beautiful paper~\cite{fblmn} reviewing this topic, the most relevant conjectures, and bibliography. Two outstanding conjectures 
have been solved recently by Koras and Palka: the Nagata-Coolidge conjecture~\cite{KP:17}, that is, any rational cuspidal curve can be 
transported to a line via a Cremona transformation and such curves can have at most four singular points~\cite{koras2019Complex}. 
There is a strong knowledge of such curves in $\PP^2$ which have helped for the solution of these conjectures and other important problems, 
like the semigroup conjecture in~\cite{fblmn}, which was proven in~\cite{BL}.

Only one rational cuspidal curve in $\PP^2$ possesses four cusps: a quintic curve with singular locus $\mathbb{A}_6+3\mathbb{A}_2$.
There are many of them with three singular points, see~\cite{fz} for an infinite family. The simplest one is the cuspidal quartic with 
three ordinary cusps. The standard Cremona transformation is a way to produce this curve, namely, the standard Cremona transformation of
a smooth conic with respect to three of its tangent lines produces a tricuspidal quartic. Note that the blowing-up at the vertices does 
not affect the curve, and the blowing-downs produce the three cusps.

\subsection{Rational cuspidal curves via weighted Cremona transformations}
In this section, we will study the strict transforms of the above tritangent conic using the inverse of the weighted Cremona transformations introduced 
in~\S\ref{sec:cremona}. As a first stage, let us compute their fundamental groups. As in \S\ref{sec:zp}, let us start with the arrangement 
of a smooth conic and three lines, giving a presentation which contains suitable meridians for all the components.

Let $\mathcal{C}$ be a smooth conic and let $X,Y,Z$ be three distinct tangent lines to $\mathcal{C}$. 
If the equations of the lines are $x=0,y=0,z=0$, respectively, then the equation of~$\mathcal{C}$ (up to a suitable change of coordinates) is
\[
x^2+y^2+z^2-2(y z+x z+x y)=0.
\]

The fundamental group of the complement of the smooth conic and three tangent lines is the Artin group 
of the triangle $T(4,4,2)$ (i.e.~\cite{ji-ruben-Artin}). However, for our purposes, it is more suitable 
to use a presentation with a more geometrical interpretation. We present it here for completeness, but 
its proof is immediate using the classical Zariski-van Kampen method (as in~\cite{ji-fundamental}).

\begin{prop}
The fundamental group of $\PP^2\setminus(\mathcal{C}\cup X\cup Y\cup Z)$ is isomorphic to 
\begin{equation}
\langle
c,\ell_x,\ell_y,\ell_z\ |\ 
[\ell_x,\ell_y]=[\ell_x,\ell_z]=[\ell_y^c,\ell_z]=
\ell_y c\ell_x c\ell_z=1
\rangle.
\end{equation}
The element~$c$ is a meridian of~$\mathcal{C}$, and $\ell_x,\ell_y,\ell_z$ are meridians of~$X,Y,Z$, 
respectively. Moreover, $(\ell_x,\ell_y)$ are meridians close to $[0:0:1]$, $(\ell_x,\ell_z)$ are meridians close to $[0:1:0]$,
and $(\ell_y^c=c^{-1}\ell_yc,\ell_z)$ are meridians close to $[0:0:1]$.
\end{prop}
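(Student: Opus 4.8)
The plan is to apply the classical Zariski--van Kampen method to the affine curve $\mathcal{C}\cup X\cup Y\cup Z$, choosing the pencil of lines through one of the three tangency points so that the braid monodromy is easy to read off. First I would work in the affine chart $z\neq 0$, so that $Z$ becomes the line at infinity and the affine picture is a smooth conic with two affine tangent lines $X=\{x=0\}$ and $Y=\{y=0\}$. The natural choice is the horizontal (or vertical) pencil adapted to the symmetry of the equation $x^2+y^2+z^2-2(xy+xz+yz)=0$; with the lines placed as above, the conic is tangent to $X$ and $Y$ at finite points and tangent to $Z$ at infinity, and the vertical pencil $x=\mathrm{const}$ has exactly two special fibers besides the line at infinity: one through the tangency point of $\mathcal{C}$ with $X$ (a fiber meeting $\mathcal{C}\cup X$ in a point of multiplicity $3$, a tangency plus a line crossing), and one tangent fiber of $\mathcal{C}$ itself. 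A generic vertical fiber meets $\mathcal{C}$ in two points, $X$ in one point, $Y$ in one point — four points, so the generic fiber of the complement has free fundamental group on four generators $c,c',\ell_x,\ell_y$, and one records how these are glued.

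The key steps, in order, are: (1) fix the base fiber and an ordered free basis, choosing meridians so that $\ell_x,\ell_y$ are the meridians near $[0:0:1]$ (the node-like crossing $X\cap Y$ lies in the chosen chart region) and so that $c,c'$ are the two meridians of the conic in the generic fiber; (2) compute the local braid monodromy at the tangency of $\mathcal{C}$ with $X$ — a tacnode-type configuration of a smooth branch tangent to one line and crossing another, contributing a relation that, after the standard identification, forces one meridian of $\mathcal{C}$ to be conjugate to the other and yields a commutation of the form $[\ell_y^c,\ell_z]=1$ once $\ell_z$ (the meridian at infinity) is expressed via the relation at infinity; (3) compute the monodromy at the simple vertical tangency of $\mathcal{C}$, giving the relation that makes $c,c'$ conjugate and collapses them to a single meridian $c$; (4) compute the contribution at infinity (the fiber $Z$), which tangentially meets $\mathcal{C}$ and crosses $X$ and $Y$; this yields the projective relation that I would massage into $\ell_y c\ell_x c\ell_z=1$, together with the remaining commutators $[\ell_x,\ell_y]=[\ell_x,\ell_z]=1$ coming from the transversal crossings $X\cap Z$ and the triviality of monodromy away from the special fibers; (5) assemble, eliminate $c'$, and simplify to the stated presentation, checking along the way that the claimed ``meridians close to a point'' assertions hold by tracking which $\beta$-paths in Definition~\ref{def-mrd} arise, exactly as in the model Example following that definition.

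The main obstacle is bookkeeping rather than conceptual: getting the exact form of the relations — in particular the precise conjugation $\ell_y^c=c^{-1}\ell_y c$ in $[\ell_y^c,\ell_z]=1$ and the ordering $\ell_y c\ell_x c\ell_z$ in the relation at infinity — requires care about the choice of connecting paths, the orientation conventions for braids, and which of the two conic-meridians survives as $c$ after collapsing. A secondary subtlety is that one tangency is ``at infinity'' (the line $Z$), so one must either choose a different affine chart to see it transversally and compare presentations, or handle it directly via the relation at infinity; I would do the former as a consistency check. Once the symmetry of the triangle $T(4,4,2)$ is respected in the choices — two tacnode-type tangencies contributing the ``$4$'' edges and one transversal crossing contributing the ``$2$'' edge — the identification with the quoted Artin-group presentation is immediate, and the geometric refinement (meridians close to the double points) follows from inspecting the local pictures at $[0:0:1]$ and $[0:1:0]$. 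Since the statement explicitly says the proof is ``immediate using the classical Zariski--van Kampen method,'' I would keep the writeup short, citing~\cite{ji-fundamental} for the computational framework and only indicating the pencil, the special fibers, and the resulting identifications.
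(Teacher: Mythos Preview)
Your approach is exactly what the paper does: it gives no argument beyond the sentence ``its proof is immediate using the classical Zariski--van Kampen method (as in~\cite{ji-fundamental}),'' so there is nothing to compare beyond the choice of method, which coincides.

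One small inconsistency in your setup is worth fixing before you write it up. You propose the vertical pencil $x=\mathrm{const}$ in the chart $z\neq 0$, but then $X=\{x=0\}$ is itself a member of that pencil, so a generic fiber meets the affine curve in only three points (two on $\mathcal{C}$, one on $Y$), not four; all vertical lines pass through $[0:1:0]=X\cap Z$, a double point of the arrangement. Either project from a point of $Z$ not on the curve (then you genuinely get four punctures $c,c',\ell_x,\ell_y$ as you describe) or keep the pencil through $[0:1:0]$ and treat $X$ as a special fiber, recovering $\ell_x$ from the monodromy there. Both routes lead to the stated presentation; just make the choice consistent so that the bookkeeping you flag (the conjugation $\ell_y^c$ and the order in $\ell_y c\ell_x c\ell_z$) goes through cleanly.
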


Considering $u:=c\ell_z$, the above presentation of $\pi_1(\PP^2\setminus(\mathcal{C}\cup X\cup Y\cup Z))$ can be alternatively written as:
\begin{equation}\label{eq:grupo2bis}
\langle
u,\ell_x,\ell_y,\ell_z\ |\ 
[\ell_x,\ell_y]=[\ell_x,\ell_z]=[\ell_y^u,\ell_z]=
u\ell_y u\ell_x\ell_z^{-1} =1
\rangle.
\end{equation}

As in \S\ref{sec:zp}, fixing $\omega,\beta_1,\beta_2$, we consider the birational map $\Phi$ and we denote 
by $\tilde{\mathcal{C}}$ the strict transform of $\mathcal{C}$ by~$\Phi$.

\begin{prop}\label{prop:semidirect}
Let $d:=\gcd(\alpha_1+2\beta_2,\alpha_2+2\beta_1)$. 
Then $\pi_1(\PP^2_\omega\setminus\tilde{\mathcal{C}})$ is the semidirect product
$(\ZZ/d)A\ltimes(\ZZ/2(\alpha_1 \alpha_2+\alpha_3 ))B$ where $B A B^{-1}=A^{-1}$.
Hence the group has size $2 d(\alpha_1\alpha_2 +\alpha_3)$ and it is abelian if and only if $d=1$.
\end{prop}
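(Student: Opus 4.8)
The strategy is identical in spirit to that of Proposition~\ref{prop:group_cubic}: use the blow-up/down decomposition of the weighted Cremona transformation to identify $\PP^2_\omega\setminus\tilde{\mathcal{C}}$ with the complement of $\widehat{\mathcal{C}}\cup L_x\cup L_y\cup L_z$ in the intermediate surface $S$, invoke~\cite[Lemma 4.18]{fujita:82} to get that $\pi_1(\PP^2_\omega\setminus\tilde{\mathcal{C}})$ is the quotient of $\pi_1(\PP^2\setminus(\mathcal{C}\cup X\cup Y\cup Z))$ by the normal subgroup generated by meridians of the three contracted exceptional curves $E_x,E_y,E_z$, and then express those meridians in the generators of the presentation~\eqref{eq:grupo2bis} using Lemma~\ref{lem:blow}. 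First I would record, exactly as in the cubic case, that a meridian of $E_z$ is $\ell_x\ell_y$, a meridian of $E_x$ is $\ell_x^{\alpha_1}\ell_z^{\beta_2}$ (up to conjugation, coming from the $(\alpha_1,\beta_2)$-blow-up producing $L_y$), and a meridian of $E_y$ is $\ell_y^{\alpha_2}\ell_z^{\beta_1}$; adjoining the relations $\ell_x\ell_y=\ell_x^{\alpha_1}\ell_z^{\beta_2}=\ell_y^{\alpha_2}\ell_z^{\beta_1}=1$ to~\eqref{eq:grupo2bis} yields a presentation of $\pi_1(\PP^2_\omega\setminus\tilde{\mathcal{C}})$.

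The next step is the group-theoretic simplification. From $\ell_x\ell_y=1$ we eliminate $\ell_x=\ell_y^{-1}$; the commutation relations $[\ell_x,\ell_y]=[\ell_x,\ell_z]=1$ then just say $\ell_y$ and $\ell_z$ commute, so $A':=\langle\ell_y,\ell_z\rangle$ is abelian. Inside this abelian group the two relations $\ell_y^{-\alpha_1}\ell_z^{\beta_2}=1$ and $\ell_y^{\alpha_2}\ell_z^{\beta_1}=1$ are a system of linear equations; since $\alpha_1\beta_1+\alpha_2\beta_2=\alpha_1\alpha_2+\alpha_3$, eliminating one variable gives a cyclic relation of order $\alpha_1\alpha_2+\alpha_3$ on a generator of $A'$ in the abelianization, but one must be careful about the actual finite abelian group $A'$ presented by the $2\times2$ integer matrix $\begin{pmatrix}-\alpha_1&\beta_2\\\alpha_2&\beta_1\end{pmatrix}$ — its Smith normal form has invariant factors whose product is $|{-\alpha_1\beta_1-\alpha_2\beta_2}|=\alpha_1\alpha_2+\alpha_3$; I expect $A'$ to be cyclic here (the $\gcd$ of the matrix entries is $1$ since $\gcd(\alpha_1,\alpha_2)=1$), generated by $\ell_z$, of order $\alpha_1\alpha_2+\alpha_3$. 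Then introduce $B:=\ell_y$ (or a suitable monomial playing the role of the element conjugated by $u$) and analyse the remaining relations $[\ell_y^u,\ell_z]=1$ and $u\ell_yu\ell_x\ell_z^{-1}=u\ell_yu\ell_y^{-1}\ell_z^{-1}=1$. The last relation reads $u\ell_yu=\ell_y\ell_z$, i.e. $u^2=\ell_yu^{-1}\ell_y\ell_z\cdot(\text{stuff})$; rewriting it as $u\ell_y u = \ell_y\ell_z$ exhibits conjugation by $u$ sending $\ell_y$ to $\ell_y^{-1}\ell_y\ell_z\,u^{-2}\cdots$ — the clean statement to extract is that conjugation by the element $B$ inverts the cyclic group $A$ of order $d=\gcd(\alpha_1+2\beta_2,\alpha_2+2\beta_1)$, while $B$ itself has order $2(\alpha_1\alpha_2+\alpha_3)$ in the quotient; here the index $d$ appears precisely because the relation $[\ell_y^u,\ell_z]=1$ combined with the others forces $\ell_z^{\,\ell_y^{-1}u\ell_yu^{-1}}=\ell_z$, and tracing exponents through the abelian part $A'$ shows the relevant torsion subgroup on which the conjugation acts nontrivially has order $d$.

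Concretely, I would set $A$ to be the subgroup generated by the appropriate power of $\ell_z$ (of order $d$), $B=u\ell_z^{\,?}$ chosen so that $B^2$ is central and $\langle B\rangle\cong\ZZ/2(\alpha_1\alpha_2+\alpha_3)$, verify $BAB^{-1}=A^{-1}$ from the relation $u\ell_yu=\ell_y\ell_z$, check $A\cap B=\{1\}$ and $AB$ exhausts the group by a normal-form/word-counting argument, and conclude $\pi_1(\PP^2_\omega\setminus\tilde{\mathcal{C}})\cong(\ZZ/d)A\ltimes(\ZZ/2(\alpha_1\alpha_2+\alpha_3))B$, of order $2d(\alpha_1\alpha_2+\alpha_3)$. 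Abelianity is then immediate: the semidirect product is abelian iff the action is trivial iff $A^{-1}=A$ pointwise iff $2A=0$ iff $d=1$ (since $d$ odd or $d=1$; note $\alpha_1\alpha_2\alpha_3\beta_1\beta_2$ need not be odd here, but $d\mid\gcd(\alpha_1+2\beta_2,\alpha_2+2\beta_1)$ and one checks $d$ is odd, so $d=1$ is the only abelian case). The main obstacle I anticipate is the bookkeeping in the last paragraph: pinning down exactly which element is $A$, which is $B$, showing the conjugation relation comes out as $BAB^{-1}=A^{-1}$ with no extra twist, and proving the order of the group is exactly $2d(\alpha_1\alpha_2+\alpha_3)$ rather than a proper quotient — this requires either a careful Tietze-transformation chase or an explicit realisation of the claimed semidirect product together with a surjection the other way, and getting the integer $d=\gcd(\alpha_1+2\beta_2,\alpha_2+2\beta_1)$ to emerge naturally (it should drop out of reducing the $2\times3$ relation matrix on $\langle\ell_y,\ell_z,u\rangle^{\mathrm{ab}}$ once the conic-specific relation $u\ell_yu=\ell_y\ell_z$ is incorporated).
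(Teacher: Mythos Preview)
Your overall architecture is right, but there is one concrete slip that derails the computation. In the conic presentation~\eqref{eq:grupo2bis} the meridians close to the node $[1:0:0]$ are $(\ell_y^c,\ell_z)$, equivalently $(\ell_y^u,\ell_z)$, \emph{not} $(\ell_y,\ell_z)$. Hence the meridian of $E_y$ produced by Lemma~\ref{lem:blow} is $u^{-1}\ell_y^{\alpha_2}u\,\ell_z^{\beta_1}$, not $\ell_y^{\alpha_2}\ell_z^{\beta_1}$; the relations you must adjoin are
\[
\ell_x\ell_y=\ell_x^{\alpha_1}\ell_z^{\beta_2}=u^{-1}\ell_y^{\alpha_2}u\,\ell_z^{\beta_1}=1,
\]
exactly as in the paper's~\eqref{eq:rels_2}. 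Your version $\ell_y^{\alpha_2}\ell_z^{\beta_1}=1$ is what one gets in the cubic case, and using it here makes the subgroup $\langle\ell_y,\ell_z\rangle$ too small: the whole non-abelian twist, and with it the integer $d=\gcd(\alpha_1+2\beta_2,\alpha_2+2\beta_1)$, lives in the discrepancy between $\ell_y$ and $u^{-1}\ell_y u$, so it cannot be recovered from the commutator $[\ell_y^u,\ell_z]$ alone as you suggest.

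Once the correct relation is in place, the paper's route to the semidirect product is cleaner than the direct Tietze chase you outline. After eliminating $\ell_y=\ell_x^{-1}$ and $\ell_z$ and checking $u^2$ is central, one takes the obvious surjection onto $\ZZ/2$ (sending $u\mapsto 1$, $\ell_x\mapsto 0$) and applies Reidemeister--Schreier to its kernel~$K$. The generators $X_0=\ell_x$, $X_1=u\ell_x u^{-1}$, $U=u^2$ turn out to commute, and the relation matrix in these generators row-reduces so that $d$ appears as the order of $D:=X_0X_1^{-1}$ while $U$ has order $\alpha_1\alpha_2+\alpha_3$; thus $K\cong\ZZ/d\times\ZZ/(\alpha_1\alpha_2+\alpha_3)$. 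Conjugation by $u$ fixes $U$ and inverts $D$, giving the claimed semidirect product. This index-$2$ subgroup approach replaces the delicate identification of ``which element is $A$ and which is $B$'' that you flag as the main obstacle.
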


\begin{rem}
Note that $d$ is odd, since $\alpha_1,\alpha_2$ cannot be simultaneously even. Moreover,
$$
\begin{aligned}
\gcd(d,\alpha_1)&=\gcd(\alpha_1,\beta_2,\alpha_2+2\beta_1)
=\gcd(\alpha_1,\alpha_2\beta_2,\alpha_2+2\beta_1)\\
&=\gcd(\alpha_1,\alpha_3,\alpha_2+2\beta_1)=1
\end{aligned}
$$
and analogously $\gcd(d,\alpha_2)=1$. Note also
\[
\gcd(d,\beta_1)=\gcd(d,\beta_1,\alpha_1+2\beta_2,\alpha_2)=1,
\]
and hence $\gcd(d,\beta_2)=1$. The following congruences can easily be checked:
\[
\alpha_1 \beta_1\equiv -2 \beta_1\beta_2\equiv \alpha_2 \beta_2 \mod d.
\]
Moreover,
$$
\begin{aligned}
\gcd(d,\alpha_1 \alpha_2+\alpha_3)&=\gcd(d, \alpha_1 \alpha_2+\alpha_3 , \alpha_2(\alpha_1 +2 \beta_2))\\
&=\gcd(d, \alpha_1 \alpha_2+\alpha_3 , \alpha_1(\alpha_2 -2 \beta_1))\\
&=\gcd(d, \alpha_1 \alpha_2+\alpha_3 , 2 \alpha_1 \beta_1)=1.
\end{aligned}
$$
\end{rem}

\begin{proof}[Proof of Proposition{\rm~\ref{prop:semidirect}}]
The presentation of  $\pi_1(\PP^2_\omega\setminus\tilde{\mathcal{C}})$ is obtained from~\eqref{eq:grupo2bis}
by adding the relations which \emph{kill} the meridians of the exceptional divisors $E_x,E_y,E_z$:
\begin{equation}\label{eq:rels_2}
\ell_x\ell_y=\ell_x^{\alpha_1}\ell_z^{\beta_2}=
u^{-1}\ell_y^{\alpha_2}u\ell_z^{\beta_1}=1.
\end{equation}
Let us first check that the abelianization of this quotient is $\ZZ/2(\alpha_1 \alpha_2+\alpha_3 )$. We will denote by $[\bullet]$
the class of $\bullet$ in the abelianization. Note that $[\ell_z]=[u]^2$; moreover, using B{\'e}zout's identity 
and the equations in~\eqref{eq:rels_2}, both $[\ell_x]$ and $[\ell_y]$ can be expressed in terms of $[\ell_z]$. Hence,
the abelianization is cyclic. A presentation matrix in terms of the generators $[\ell_y],[u]$ is given by
\[
\begin{pmatrix}
\alpha_2& 2\beta_1\\
-\alpha_1& 2\beta_2
\end{pmatrix}
\]
whose determinant, $2(\alpha_1 \alpha_2+\alpha_3 )$, is the size of the abelianization. 

Let us study now the group itself. Note first that $\ell_y$ can be eliminated from~\eqref{eq:rels_2} as $\ell_y=\ell_x^{-1}$.
Let us check that $u^2$ is central. The last relation in~\eqref{eq:grupo2bis} can be written as
\[
u^2=\ell_z\ell_x^{-1}(u^{-1}\ell_x u).
\]
We deduce that $u^2$ commutes with $\ell_z$, since it commutes with each factor; hence $u^2$ also commutes with 
$u\ell_z u^{-1}$. Also note that
\[
\ell_x^{\alpha_1}=\ell_z^{-\beta_2},\quad \ell_x^{\alpha_2} =u\ell_z^{\beta_1}u^{-1}\Longrightarrow
\ell_x=\ell_z^{-\hat{\alpha}_1\beta_2} u\ell_z^{-\hat{\alpha}_2\beta_1}u^{-1}.
\]
Then $u^2$ commutes with $\ell_x$ and it is central. 

Using the last relation in~\eqref{eq:grupo2bis} $\ell_z$ can also be eliminated as $\ell_z=u\ell_x^{-1} u\ell_x$.
The presentation of the group becomes:
\begin{equation*}
\langle
u,\ell_x\ |\ 
[\ell_x,u\ell_x^{-1} u]=[\ell_x^u,u\ell_x^{-1} u\ell_x]=
\ell_x^{\alpha_1}(u\ell_x^{-1} u\ell_x)^{\beta_2}=
u^{-1}\ell_x^{-q} u(u\ell_x^{-1} u\ell_x)^{\beta_1}=1
\rangle
\end{equation*}
which can be further simplified using the centrality of $u^2$:
\begin{equation*}
\langle
u,\ell_x | 
[\ell_x,u^2]\!=\![\ell_x,u\ell_x u^{-1}]\!=\!
u^{2\beta_2}\ell_x^{\alpha_1+\beta_2}(\!u\ell_x u^{-1}\!)^{-\beta_2}\!=\!
u^{2\beta_1} (\!u \ell_x u^{-1}\!)^{-(\alpha_2+\beta_1\!)}\ell_x^{\beta_1}\!=\!1
\rangle
\end{equation*}

The map $\pi_1(\PP^2_\omega\setminus\tilde{\mathcal{C}})\to\ZZ/2$
given by $u\mapsto 1$ and $\ell_x,\ell_z\mapsto 0$ is well defined.
A presentation of its kernel~$K$ is obtained using Reidemeister-Schreier method.
The generators are $X_0:=\ell_x$, $X_1:=u\ell_x u^{-1}$,
and $U:=u^2$. The first two relations imply that the group is abelian; the other relations yield:
\begin{align*}
u^{2\beta_2}\ell_x^{\alpha_1+\beta_2}(u\ell_x u^{-1})^{-\beta_2}=1 &\Longrightarrow 
U^{\beta_2}X_1^{\alpha_1+\beta_2} X_0 ^{-\beta_2}=U^{\beta_2}X_0^{\alpha_1+\beta_2} X_1 ^{-\beta_2}=1, \\
u^{2\beta_1} (u \ell_x u^{-1})^{-(\alpha_2+\beta_1)}\ell_x^{\beta_1}=1 &\Longrightarrow
U^{\beta_1} X_1^{-(\alpha_2+\beta_1)}X_0^{\beta_1}=
U^{\beta_1} X_0^{-(\alpha_2+\beta_1)}X_1^{\beta_1}=1.
\end{align*}
Let us express these relations in a matrix (the rows represent the relations and the columns stand for 
the generators). These relations become (recall $d=\gcd(\alpha_1+2\beta_2,\alpha_2+2\beta_1)$):
\begin{gather*}
\begin{pmatrix}
d&-d&0\\
-\beta_2& \alpha_1 +\beta_2 &\beta_2\\
\beta_1& -(\alpha_2+\beta_1)&\beta_1\\
\hline
X_0& X_1 & U
\end{pmatrix}\sim
\begin{pmatrix}
d&0&0\\
-\beta_2& \alpha_1  &\beta_2\\
\beta_1 & -\alpha_2 &\beta_1\\
\hline
X_0 X_1^{-1}& X_1 & U
\end{pmatrix}.
\end{gather*}
The determinant of this matrix is $d(\alpha_1 \alpha_2+\alpha_3 )$ which is the size of~$K$; the greatest common divisor of the $2$-minors
divides $d$ and $2\alpha_1\beta_1$, i.e., it is~$1$. Hence, the group~$K$ is cyclic of order $d(\alpha_1 \alpha_2+\alpha_3 )$.
One also has that $D:=X_0 X_1^{-1}$ is of order~$d$. Considering the product of the second relation to the power~$\alpha_2$
and the third relation to the power~$\alpha_1$ one obtains
\[
1=D^{\alpha_1\beta_1-\alpha_2\beta_2} U^{\alpha_1 \alpha_2+\alpha_3 }= U^{\alpha_1 \alpha_2+\alpha_3 }.
\]
From the order in the abelianization, we deduce that $U$ is of order $\alpha_1 \alpha_2+\alpha_3 $. Hence $K$ is the direct product of
the cyclic group of order~$d$ generated by $D$ and the cyclic group of order~$\alpha_1 \alpha_2+\alpha_3 $ generated by $U$. The conjugation
by $u$ satisfies $u U u^{-1}=U$ and $u D u^{-1}=D^{-1}$. The result follows.
\end{proof}

Embedded $\Q$-resolutions of the singularities of these curves can be computed as in 
Propositions~\ref{prop:singpz} and~\ref{prop:singpx}, see Figure~\ref{fig:rcc}.

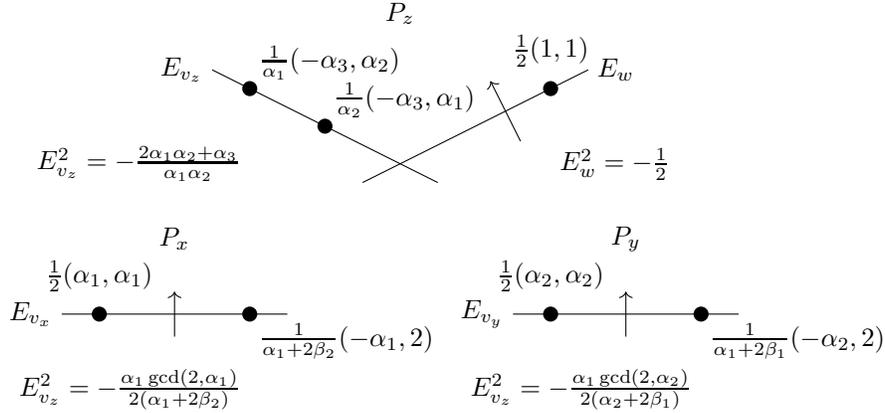
\begin{figure}[ht]
\begin{center}
\begin{tikzpicture}
\coordinate (A) at (-2,1);
\coordinate (B) at (0,0);
\coordinate (C) at (2,1);
\node at (0,2) {$P_z$};
 
\draw ($1.25*(A)-.25*(B)$) node[left] {$E_{v_z}$}  --  ($-.25*(A)+1.25*(B)$);
\draw ($1.25*(C)-.25*(B)$) node[right] {$E_{w}$}  --  ($-.25*(C)+1.25*(B)$);
\draw[->] ($.5*(B)+.7*(C)+.2*(1,-2)$) -- ($.5*(B)+.7*(C)-.2*(1,-2)$);

\fill (A) circle [radius=.1cm];
\node[above right=0pt] at (A) {$\frac{1}{\alpha_1}(-\alpha_3,\alpha_2)$};
\node[above right=0pt] at ($.5*(A)+.5*(B)$) {$\frac{1}{\alpha_2}(-\alpha_3,\alpha_1)$};
\fill ($.5*(A)+.5*(B)$) circle [radius=.1cm];
\fill (C) circle [radius=.1cm];
\node[above=5pt] at (C) {$\frac{1}{2}(1,1)$};

\node[right] at ($(C)-(0,1)$) {$E_w^2=-\frac{1}{2}$};
\node[left] at ($(A)-(0,1)$) {$E_{v_z}^2=-\frac{2\alpha_1 \alpha_2+\alpha_3 }{\alpha_1\alpha_2}$};

\begin{scope}[xshift=-3cm,yshift=-2cm]
\coordinate (A) at (-1,0);
\coordinate (B) at (1,0);
\coordinate (C) at (2,1);

\node at (0,1) {$P_x$};
 
\draw ($1.25*(A)-.25*(B)$) node[left] {$E_{v_x}$}  --  ($-.25*(A)+1.25*(B)$);

\draw[->] ($.5*(A)+.5*(B)-.3*(0,1)$) -- ($.5*(A)+.5*(B)+.3*(0,1)$);

\fill (A) circle [radius=.1cm];
\node[above=5pt] at (A) {$\frac{1}{2}(\alpha_1,\alpha_1)$};
\node[below right=0pt] at ($(B)$) {$\frac{1}{\alpha_1+2\beta_2}(-\alpha_1,2)$};
\fill ($(B)$) circle [radius=.1cm];

\node[left] at ($(A)-(-2,1)$) {$E_{v_z}^2=-\frac{\alpha_1\gcd(2,\alpha_1)}{2(\alpha_1+2\beta_2)}$};
\end{scope}

\begin{scope}[xshift=3cm,yshift=-2cm]
\coordinate (A) at (-1,0);
\coordinate (B) at (1,0);
\coordinate (C) at (2,1);

\node at (0,1) {$P_y$};
 
\draw ($1.25*(A)-.25*(B)$) node[left] {$E_{v_y}$}  --  ($-.25*(A)+1.25*(B)$);

\draw[->] ($.5*(A)+.5*(B)-.3*(0,1)$) -- ($.5*(A)+.5*(B)+.3*(0,1)$);

\fill (A) circle [radius=.1cm];
\node[above=5pt] at (A) {$\frac{1}{2}(\alpha_2,\alpha_2)$};
\node[below right=0pt] at ($(B)$) {$\frac{1}{\alpha_1+2\beta_1}(-\alpha_2,2)$};
\fill ($(B)$) circle [radius=.1cm];

\node[left] at ($(A)-(-2,1)$) {$E_{v_z}^2=-\frac{\alpha_1\gcd(2,\alpha_2)}{2(\alpha_2+2\beta_1)}$};
\end{scope}
\end{tikzpicture}
\caption{Singularities of the rational cuspidal curves}
\label{fig:rcc}
\end{center}
\end{figure}

\subsection{Rational cuspidal curves via weighted Kummer covers}

There is another simple way to produce rational cuspidal curves in weighted projective planes from this arrangement
of curves. It is quite simple but it will be shown to be useful in the upcoming sections. Let $d_1,d_2,d_3$ be 
pairwise coprime integers and let $\omega:=(e_1,e_2,e_3)$, where $e_i:=d_j d_k$, $\{i,j,k\}=\{1,2,3\}$. Following \S\ref{sec:WPR} 
note that $\eta=(1,1,1)$ and thus there is an isomorphism $\mathbb{P}^2_\omega\to \mathbb{P}^2$ given by 
$[x:y:z]_\omega\mapsto [x^{d_1}:y^{d_2}:z^{d_3}]$. 
This map gives a geometrical interpretation to the group
\[
G_{d_1,d_2,d_3}:=\pi_1(\PP^2\setminus(\mathcal{C}\cup X\cup Y\cup Z))/\langle\ell_x^{d_1}=\ell_y^{d_2}=\ell_z^{d_3}=1\rangle.
\]
as the orbifold group of $\PP^2_\omega$ with respect to the curve $\mathcal{C}\cup X\cup Y\cup Z$ and index
$e(\mathcal{C})=0,n(X)=d_1,n(Y)=d_2,n(Z)=d_3$ as defined in~\cite{ACM-orbifoldgroups}. Briefly, if $X$ is a smooth projective surface,
$D=D_1\cup...\cup D_s$ is a normal crossing union of smooth hypersurfaces, and $n_i:=n(D_i)\in \mathbb{Z}_{\geq 0}$, then one can 
define the \emph{orbifold fundamental group} $\pi_1^{\orb}(X)$ of $X$ with respect to $D$ with indices $n_i$ as the quotient 
of the group $\pi_1(X\setminus D)$ by the normal subgroup generated by $\gamma_i^{n_i}$, where $\gamma_i$ is a meridian of $D_i$. 
If $D$ do not have normal crossings, then one resolves to a normal crossing divisor by blowing up points and defines the index at an 
exceptional divisor as the least common multiple of the indices of the components passing through the point (in this context we
set~$\lcm(0,n)=0$).

\begin{prop}\label{prop:group_orbifold}
The abelianization of the group $G_{d_1,d_2,d_3}$ is $\ZZ/2d_1d_2d_3$. 
The group is abelian if and only if at least one of $d_1,d_2,d_3$ equals~$1$.
\end{prop}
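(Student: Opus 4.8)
The plan is to follow the same strategy as in the proof of Proposition~\ref{prop:semidirect}: obtain a presentation of $G_{d_1,d_2,d_3}$ from presentation~\eqref{eq:grupo2bis} by adding the relations $\ell_x^{d_1}=\ell_y^{d_2}=\ell_z^{d_3}=1$, then compute its abelianization via a presentation matrix, and finally analyze the full group to detect non-abelianity. The starting point is the presentation
\[
\langle u,\ell_x,\ell_y,\ell_z\mid [\ell_x,\ell_y]=[\ell_x,\ell_z]=[\ell_y^u,\ell_z]=u\ell_y u\ell_x\ell_z^{-1}=1,\ \ell_x^{d_1}=\ell_y^{d_2}=\ell_z^{d_3}=1\rangle,
\]
where $u=c\ell_z$. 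First I would compute the abelianization: passing to additive notation, the last relation of~\eqref{eq:grupo2bis} gives $[\ell_z]=2[u]+[\ell_x]+[\ell_y]$, so the abelianization is generated by $[u],[\ell_x],[\ell_y]$ subject to $d_1[\ell_x]=d_2[\ell_y]=0$ and $d_3(2[u]+[\ell_x]+[\ell_y])=0$. A short Smith normal form computation on the resulting $3\times 3$ presentation matrix should yield order $2d_1d_2d_3$ (the factor $2$ coming from the ``$2[u]$'' in the expression for $[\ell_z]$, exactly as the determinant $2(\alpha_1\alpha_2+\alpha_3)$ appeared in Proposition~\ref{prop:semidirect}); one checks the $2$-minors have gcd dividing a unit so that the group is in fact cyclic.

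For the second assertion, one direction is easy: if, say, $d_1=1$, then $\ell_x=1$, hence $\ell_y=1$ by $[\ell_x,\ell_y]$ together with $\ell_y c\ell_x c\ell_z=1$... more precisely, with $\ell_x=1$ the relation $u\ell_y u\ell_x\ell_z^{-1}=1$ becomes $\ell_z=u\ell_y u$, and the commutator relations collapse, so a direct manipulation shows the group is generated by commuting elements $u,\ell_y$ and is therefore abelian; by symmetry the same holds if $d_2=1$ or $d_3=1$. For the converse, I would argue as in Proposition~\ref{prop:semidirect}: when all $d_i>1$ one exhibits an explicit non-abelian quotient of $G_{d_1,d_2,d_3}$. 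The natural candidate is a quotient that specializes the $d_i$ (e.g.\ by the smallest prime $p$ dividing each, or by reducing to a situation already handled): indeed, since the orbifold group only surjects \emph{onto} coarser orbifold groups, one may pass to a quotient where the indices are replaced by suitable prime powers and then invoke a concrete non-abelian group — for instance the image already contains the dihedral-type behavior $BAB^{-1}=A^{-1}$ visible in Proposition~\ref{prop:semidirect}'s semidirect product, since $G_{d_1,d_2,d_3}$ naturally surjects onto $\pi_1(\PP^2_\omega\setminus\tilde{\mathcal C})$ for the weight $\omega=(e_1,e_2,e_3)$ whenever the $\beta_i$ can be chosen compatibly. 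Alternatively, and perhaps more cleanly, one constructs directly a representation of $G_{d_1,d_2,d_3}$ into the infinite dihedral group or a finite dihedral group $D_n$ by sending $u$ to a reflection and $\ell_x,\ell_y,\ell_z$ to rotations, checking the relations of~\eqref{eq:grupo2bis} reduce to $(\text{rotation})(\text{reflection})(\text{rotation})^{\pm1}\cdots$; the image is non-abelian precisely when none of the rotation orders is forced to be trivial, i.e.\ when all $d_i>1$.

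The main obstacle I expect is the converse direction: producing a non-abelian quotient that works \emph{uniformly} for all triples $(d_1,d_2,d_3)$ with each $d_i>1$, rather than under extra coprimality or parity hypotheses. Proposition~\ref{prop:semidirect} gets non-abelianity from an arithmetic condition ($d=\gcd(\alpha_1+2\beta_2,\alpha_2+2\beta_1)>1$), whereas here one wants it from the much simpler hypothesis $\min(d_1,d_2,d_3)>1$; reconciling these will require either a clever choice of auxiliary weights/representation or a direct Reidemeister–Schreier analysis of the index-$2$ subgroup (the kernel of $u\mapsto 1$, $\ell_x,\ell_y,\ell_z\mapsto 0$), exactly as in Proposition~\ref{prop:semidirect}, from which one reads off that the conjugation action of $u$ inverts a cyclic factor whose order is a multiple of $\gcd(d_i,d_j)$-type quantities and hence is nontrivial as soon as two of the $d_i$ exceed $1$ — with the third handled by symmetry. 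I would carry out that subgroup computation explicitly and extract the promised dihedral-type semidirect structure, which simultaneously recovers the order $2d_1d_2d_3$ and pins down when the group fails to be abelian.
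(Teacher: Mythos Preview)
Your computation of the abelianization and your treatment of the case where some $d_i=1$ are essentially correct and parallel the paper's argument (the paper does the case $d_3=1$ explicitly; your case $d_1=1$ also works after a short manipulation, and the geometric symmetry of the three tangent lines covers the rest). The substantive content of the proposition is the non-abelianity when all $d_i>1$, and there your proposal has a genuine gap.

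The difficulty is that $G_{d_1,d_2,d_3}$ is \emph{not} close to metabelian in the way the groups of Proposition~\ref{prop:semidirect} are, so neither the dihedral-quotient idea nor a Reidemeister--Schreier computation hoping to extract a ``$BAB^{-1}=A^{-1}$'' semidirect structure will succeed. Concretely, your dihedral representation requires rotations $\ell_x,\ell_y,\ell_z$ in a cyclic group satisfying $\ell_x=\ell_y\ell_z$ (from $u\ell_y u\ell_x\ell_z^{-1}=1$ with $u$ a reflection) together with $\ell_x^{d_1}=\ell_y^{d_2}=\ell_z^{d_3}=1$; since the $d_i$ are pairwise coprime, looking one prime at a time forces $\ell_x=\ell_y=\ell_z=1$, so every dihedral image of $G_{d_1,d_2,d_3}$ is abelian. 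Likewise, there is no natural surjection from $G_{d_1,d_2,d_3}$ onto the Cremona groups of Proposition~\ref{prop:semidirect}: those are quotients of the \emph{same} ambient fundamental group by a quite different set of relations~\eqref{eq:rels_2}, and neither set of relations implies the other.

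The paper's argument is geometric and supplies the missing idea. The double cover of $\PP^2$ branched along the conic~$\cC$ is $\PP^1\times\PP^1$, and the three tangent lines pull back to three pairs of rulings. The index-$2$ subgroup $K_{d_1,d_2,d_3}$ (kernel of $u\mapsto 1$) is then the orbifold fundamental group of this configuration on $\PP^1\times\PP^1$, and either projection to $\PP^1$ induces a surjection of $K_{d_1,d_2,d_3}$ onto the triangle group $\pi_1^{\orb}(\PP^1_{d_1,d_2,d_3})$. For pairwise coprime $d_i>1$ this triangle is either hyperbolic (infinite group) or $\{2,3,5\}$ (group $A_5$); in either case the quotient is non-abelian, and the paper in fact goes further and shows $K_{d_1,d_2,d_3}$ is infinite even in the spherical case via a \texttt{GAP} computation. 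Without this passage to the double cover and the triangle-group quotient, presentation-theoretic manipulations in the style of Proposition~\ref{prop:semidirect} do not detect the non-abelianity.
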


\begin{proof}
The computation for the abelianization is straightforward. Assume $d_3=1$, then 
$$
\begin{aligned}
G_{d_1,d_2,1}&=\langle\ell_x,u\mid [\ell_x,\underbrace{u\ell_x}_{v^{-1}} u]=1, \ell_x^{d_1}=(u\ell_x u)^{d_2}=1\rangle\\
&=\langle\ell_x,v\mid [\ell_x,v^2]=1, \ell_x^{d_1}=\ell_x^{d_2} v^{2d_2}=1\rangle.
\end{aligned}
$$
Using B\'ezout's identity we can express $\ell_x$ in terms of $v$ and the result follows.

For the case $(d_1,d_2,d_3)\neq (1,1,1)$, let us consider the double cover of $\PP^2$ ramified along~$\cC$. It is $\PP^1\times\PP^1$
and the preimage~$\tilde{\cC}$ of $\cC$ is the diagonal; the three lines are transformed in pairs of vertical-horizontal
lines $X_{\pm},Y_{\pm},Z_{\pm}$ intersecting $\tilde{\cC}$, see Figure~\ref{fig:doublecover}.

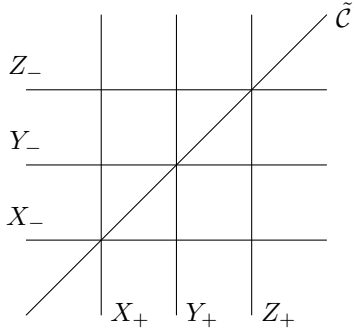
\begin{figure}[ht]
\begin{center}
\begin{tikzpicture}
\draw (-2,-2) -- (2,2) node[right] {$\tilde{\mathcal{C}}$} ;
\draw (-2,-1) node[above] {$X_-$}-- (2,-1);
\draw (-2,0) node[above] {$Y_-$} -- (2,0);
\draw (-2,1) node[above] {$Z_-$} -- (2,1);
\draw (-1,-2) node[right] {$X_+$}  --  (-1,2) ;
\draw (0,-2)  node[right] {$Y_+$}    --  (0,2) ;
\draw (1,-2)  node[right] {$Z_+$} -- (1,2);
\end{tikzpicture}
\caption{Double cover ramified along $\cC$}
\label{fig:doublecover}
\end{center}
\end{figure}

Note that the fundamental group of $\PP^1\times\PP^1\setminus(\tilde{\cC}\cup X_+\cup X_-\cup Y_+\cup Y_-\cup Z_+\cup Z_-)$
is isomorphic to the fundamental group of the projective complement of Ceva's arrangement.
The kernel $K_{d_1,d_2,d_3}$ of the map $G_{d_1,d_2,d_3}\to\ZZ/2$, $u\mapsto 1$ and the images of the other generators vanish,
it is an index~2 subgroup of $G_{d_1,d_2,d_3}$ and it is an orbifold fundamental group for the
above configuration in $\PP^1\times\PP^1$; the projection on each component induces orbifold morphisms onto $\PP^1_{d_1,d_2,d_3}$,
where $\PP^1_{d_1,d_2,d_3}$ is an orbifold modeled on $\PP^1$, with three quotient points of order $d_1,d_2,d_3$, respectively,
and its orbifold fundamental group is isomorphic to 
$\langle \mu_1,\mu_2,\mu_3\mid \mu_1^{d_1}=\mu_2^{d_2}=\mu_3^{d_3}=\mu_3\mu_2\mu_1=1\rangle$,
a triangle group. The combination of the two projections induces an 
epimorphism of $K_{d_1,d_2,d_3}$ onto $\pi_1^{\orb}(\PP^1_{d_1,d_2,d_3})$. 
The triangle is hyperbolic if $\{d_1,d_2,d_3\}\neq\{2,3,5\}$ and hence its group is infinite. 
If $\{d_1,d_2,d_3\}=\{2,3,5\}$, the triangle group is the alternating group $A_5$, with cardinal~$60$.
Using \texttt{GAP}~\cite{GAP4} we can compute the intersection of the kernels of the two projections (which is a subgroup
of index~$3600$); its abelianization is $\ZZ^{59}$ and the result follows. The computation can be checked
in \url{https://github.com/enriqueartal/AnOrbifoldFundamentalGroup} using
\texttt{Sagemath}~\cite{sage89} and \texttt{Binder}~\cite{binder}.
\end{proof}

\subsection{A rational cuspidal curve with four cusps}

We present in this section a nice example in $\PP^2_{(1,1,2)}$, which is a rational curve of degree~$6$ with $4$ ordinary cusps
--\,incidentally, note that $4$ is the maximal number of singular points a rational cuspidal curve can have in $\mathbb{P}^2$.
Let us explain how to construct it via Cremona transformations. Let us start
with a tricuspidal quartic $\cC_0$; this curve, dual of the nodal cubic, has a bitangent line~$L$. Let $P_0\in\cC_0\cap L$;
its blown-up produces a ruled surface~$\Sigma_1$, where the negative section~$E$ is the exceptional component and $\cC_1$,
the strict transform of $\cC_0$ has three cusps and one tangent fiber.
Let us consider the Nagata transformation at $\cC_1\cap E$; the result is $\Sigma_2$ and the blow-down of the negative
section produces $\PP^2_{(1,1,2)}$. The strict transform $\cC$ of $\cC_1$ is the desired curve.

\begin{prop}
The fundamental group $\pi_1(\PP^2_{(1,1,2)}\setminus(\cC\cup\{P_z\}))$ has a presentation
\[
\langle
s,t,u\mid
s t s= t s t, s u s=u s u, t u t=u t u, (s t u)^2=1
\rangle.
\]
\end{prop}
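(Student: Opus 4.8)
The plan is to imitate the strategy of Sections~\ref{sec:zp} and the first part of Section~\ref{sec:ratcusp}: start from a presentation of the fundamental group of the complement of a plane arrangement (here the tricuspidal quartic $\cC_0$ together with its bitangent line $L$), read off meridians for all the components, and then trace what happens to these meridians under the blow-up--Nagata--blow-down chain that produces $\PP^2_{(1,1,2)}$. By Fujita's lemma (\cite[Lemma 4.18]{fujita:82}), as used already in the proof of Corollary~\ref{cor:abelian}, the complement $\PP^2_{(1,1,2)}\setminus(\cC\cup\{P_z\})$ is obtained from the complement of $\cC_0\cup L$ in $\PP^2$ by adjoining the relations that kill the meridians of the exceptional components created along the way (the section $E$ of $\Sigma_1$, the fiber contracted in the Nagata step, and the negative section of $\Sigma_2$). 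So the first step is to fix a geometric presentation of $\pi_1(\PP^2\setminus(\cC_0\cup L))$; since $\cC_0$ is the dual of the nodal cubic, this is a classical computation (the tricuspidal quartic complement has the well-known presentation with braid-type relations, and $L$ is a bitangent, so the Zariski--van Kampen method applied to the pencil through a generic point gives a presentation with a meridian $\ell$ of $L$ and three meridians $s,t,u$ of $\cC_0$ coming from the three local branches one sees over the bitangency points).

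Next I would identify exactly which loops become trivial. Blowing up $P_0\in\cC_0\cap L$ turns $L$ into (the strict transform of) a fiber of $\Sigma_1$ meeting $\cC_1$ tangentially, and $E$ is the negative section; a meridian of $E$ is a product of the meridian of $L$ and the meridian of $\cC_0$ near $P_0$ (cf.\ Lemma~\ref{lem:blow} and its weighted analogues). The Nagata transformation at $\cC_1\cap E$ is an elementary transformation of ruled surfaces -- a blow-up of a point followed by a blow-down of the fiber -- so it again only modifies the picture by a point blow-up, and the final blow-down of the negative section of $\Sigma_2$ contracts a curve whose meridian is expressible through the previous ones. Killing all these meridians, together with the relation $\ell=1$ (the bitangent line disappears entirely, being absorbed into the exceptional locus and then contracted), should collapse the presentation of $\pi_1(\PP^2\setminus(\cC_0\cup L))$ down to the three generators $s,t,u$ of the quartic's three cusps -- which reappear as three of the four cusps of $\cC$ -- while the fourth cusp and the point $P_z$ contribute the single relation $(stu)^2=1$. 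The braid relations $sts=tst$, $sus=usu$, $tut=utu$ are precisely the local relations at the three original cusps of $\cC_0$ and survive the whole process unchanged, since the Cremona modifications take place away from those cusps.

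The main obstacle I expect is bookkeeping the Nagata (elementary) transformation correctly: one must be careful about which meridian of which exceptional/contracted curve is being killed at each stage, and about the orientation/ordering conventions so that the surviving relation comes out as $(stu)^2=1$ rather than some conjugate or inverse variant. In particular the fourth cusp of $\cC$ is created precisely by the final blow-down, and verifying that the local monodromy there, combined with the quasi-smooth point $P_z=[0:0:1]_\omega$ of $\PP^2_{(1,1,2)}$ lying on $\cC$, yields exactly one extra relation $(stu)^2=1$ requires matching the abelianization: $H_1$ of the complement must be $\ZZ/6$ (the curve has degree $6$ and passes through the vertex $P_z$, so $H_1\cong\ZZ/\deg_\omega\cC=\ZZ/6$), and indeed abelianizing $\langle s,t,u\mid sts=tst,\ sus=usu,\ tut=utu,\ (stu)^2=1\rangle$ forces $s=t=u=:g$ and $g^6=1$, a consistency check that pins down the exponent in the last relation. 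A cleaner alternative, which I would run in parallel as a sanity check, is to recognize the target group as a well-known one: the group $\langle s,t,u\mid sts=tst,\ sus=usu,\ tut=utu,\ (stu)^2=1\rangle$ is the quotient of the affine braid group (or of the fundamental group of the complement of Ceva's arrangement / the tricuspidal quartic plus a conic) by a central element, and its structure can be verified directly, giving independent confirmation that the geometric computation has been carried out without sign or ordering errors.
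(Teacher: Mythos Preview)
Your strategy---trace meridians through the birational chain $\PP^2\dashrightarrow\Sigma_1\dashrightarrow\Sigma_2\to\PP^2_{(1,1,2)}$ starting from a presentation of $\pi_1(\PP^2\setminus(\cC_0\cup L))$---is workable in principle, but it is \emph{not} what the paper does, and your bookkeeping of which meridians to kill is incorrect.

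The paper's argument is much more direct: since blowing down the negative section $E_2$ of $\Sigma_2$ gives $P_z\in\PP^2_{(1,1,2)}$, one has $\PP^2_{(1,1,2)}\setminus(\cC\cup\{P_z\})\cong\Sigma_2\setminus(\cC_2\cup E_2)$, and the paper simply applies Zariski--van~Kampen to the ruling of $\Sigma_2$. The curve $\cC_2$ is a $3$-section, so the generic fiber contributes three generators $s,t,u$; the braid monodromy around the singular fibers (cusps and tangencies) gives the three length-$3$ braid relations, and the relation $(stu)^2=1$ comes from the section $E_2$ at infinity. No reference to $\cC_0$, $L$, or the intermediate surfaces is needed.

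Your chain-of-blow-ups approach has a genuine error in the list of meridians to kill. Trace the divisors: the exceptional $E$ over $P_0$ becomes, after the Nagata move, the negative section $E_2$ of $\Sigma_2$ (its self-intersection drops from $-1$ to $-2$). Since $P_z$ is \emph{removed}, a meridian of $E=E_2$ must \emph{not} be killed. Likewise, the fiber $\tilde{L}$ contracted in the Nagata step maps to a point lying on $\cC_2$ (this is exactly the fourth cusp), so $\mu_L=\mu_{\tilde L}$ remains nontrivial---your relation $\ell=1$ is wrong. The \emph{only} meridian to kill is that of $E'$, the exceptional curve created by the blow-up at $Q=\cC_1\cap E\cap\tilde L$ inside the Nagata transformation; this $E'$ becomes an ordinary fiber of $\Sigma_2$ not contained in $\cC_2\cup E_2$. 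So the correct quotient is $\pi_1(\PP^2\setminus(\cC_0\cup L))/\langle\!\langle\mu_{E'}\rangle\!\rangle$, with $\mu_{E'}$ a product of meridians of $E$, $\cC_1$, $\tilde L$ near $Q$ (three branches through one point, not two). Carrying this through requires a presentation of $\pi_1(\PP^2\setminus(\cC_0\cup L))$ with explicit meridians near the tacnode $P_0$, then expressing $\mu_E$ and hence $\mu_{E'}$ in those terms---doable, but considerably heavier than the paper's one-line reduction to Zariski--van~Kampen on the ruled surface.
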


\begin{proof}
Following the construction it is the fundamental group of $\Sigma_2\setminus(\cC_2\cup E_2)$ where $\cC_2$ is
the strict transform of $\cC$ and $E_2$ is the negative section. The Zariski-van Kampen method applied to the ruling yields
the result.
\end{proof}

\subsection{Milnor fibers}
We have presented in \S\ref{sec:zp} and in \S\ref{sec:ratcusp} several examples of irreducible 
quasi-projective curves such that their (maybe orbifold) fundamental groups are non-abelian. 
As a consequence their cones are quasi-homogeneous non-isolated surface singularities
in~$\CC^3$ with non simply-connected Milnor fibers.

If $F\in\CC[x,y,z]$ is a homogeneous polynomial of degree~$d$, an important topological invariant is its Milnor fiber. The Milnor fiber of a homogeneous singularity is 
a fiber of $F:\CC^3\setminus F^{-1}(0)\to\CC^*$, say $F^{-1}(1)$. The restriction to $F^{-1}(1)$ of $F$ of the natural map 
$\CC^3\setminus\{0\}\to\PP^2$ is
a $d$-cyclic cover onto the complement of the tangent cone~$\cC_d$ in $\PP^2$, defined by an epimorphism
$\pi_1(\PP^2\setminus\cC_d)\to\ZZ/d$. 

If $\omega$ is a weight and 
$F\in\CC[x,y,z]$ is an $\omega$-quasi-homogeneous polynomial of $\omega$-degree~$d$, its Milnor fiber 
$F=1$ can also be recovered as a $d$-cyclic orbifold cover of $\PP^2_\omega\setminus\cC_d$ (the complement
of the \emph{tangent $\omega$-quasi-cone})
defined by an epimorphism
$\pi_1^{\orb}(\PP^2_\omega\setminus\cC_d)\to\ZZ/d$. If the elements of $\omega$ are pairwise coprime and the vertices
are in~$\cC_d$, then the notions of $\pi_1$ and $\pi_1^{\orb}$ coincide. If it is not the case, the notion
of orbifold fundamental groups apply.

The curves obtained via the Cremona transformation provide homogeneous singularities whose topology is not complicated
and such that the Milnor fiber has non-trivial fundamental group. The following result is a direct consequence of
Proposition~\ref{prop:semidirect}.

\begin{prop}
Let
$\omega=(\alpha_1, \alpha_2,\alpha_3)$ be pairwise coprime and let $\beta_1,\beta_2$ be such that 
$\alpha_1 \alpha_2+\alpha_3 =\alpha_1\beta_1+\alpha_2\beta_2$.
Let $S:=\{F_{\omega,\beta_1,\beta_2}(x,y,z)=0\}$, where
\[
F_{\omega,\beta_1,\beta_2}(x,y,z)=y^{2\alpha_1} z^2+ x^{2\alpha_2} z^2+ x^{2\beta_1} y^{2\beta_2}-
2 z(x^{\alpha_2}  y^{\alpha_1}  z+x^{\alpha_2+\beta_1} y^{\alpha_1} + x^{\alpha_2}  y^{\alpha_1+\beta_2})
\]
defines a $\omega$-homogeneous singularity. Then, the fundamental group of the Milnor fiber of $S$ 
is cyclic of order~$\gcd(\alpha_1+2\beta_2,\alpha_2+2\beta_1)$.
\end{prop}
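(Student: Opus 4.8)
The plan is to identify the Milnor fiber of the $\omega$-homogeneous singularity $S=\{F_{\omega,\beta_1,\beta_2}=0\}$ with a cyclic orbifold cover of $\PP^2_\omega$ minus the tangent $\omega$-quasi-cone, and then to read off its fundamental group from the orbifold fundamental group computed earlier. First I would observe that $F_{\omega,\beta_1,\beta_2}$ is, up to the substitution $x\mapsto x^{\alpha_2}$-type weighting, exactly the equation obtained by pulling back the conic $x^2+y^2+z^2-2(yz+xz+xy)=0$ under the rational map $\Phi=\Phi_{\omega,\beta_1,\beta_2}$ of \S\ref{sec:cremona}; concretely, the tangent $\omega$-quasi-cone $\cC_d$ of $S$ is precisely $\tilde{\mathcal{C}}=\tilde\Phi^*(\mathcal{C})$, the strict transform of the tritangent conic studied in Proposition~\ref{prop:semidirect}, and the $\omega$-degree $d=\alpha_1\alpha_2+\alpha_3$ matches (so that the three coordinate monomials of the map have $\omega$-degree $\alpha_1\alpha_2+\alpha_3$). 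This is a direct check: expand $F_{\omega,\beta_1,\beta_2}$ and recognize the six monomials as $z^2$ times the six monomials of the conic after the monomial substitution $[x:y:z]\mapsto[y^{\alpha_1}z:x^{\alpha_2}z:x^{\beta_1}y^{\beta_2}]$.

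Next I would invoke the general principle recalled in \S\ref{sec:ratcusp} (the ``Milnor fibers'' subsection): for an $\omega$-quasi-homogeneous $F$ of $\omega$-degree $d$, the Milnor fiber $F=1$ is a $d$-cyclic orbifold cover of $\PP^2_\omega\setminus\cC_d$, classified by the epimorphism $\pi_1^{\orb}(\PP^2_\omega\setminus\cC_d)\to\ZZ/d$ sending every orbifold meridian of $\cC_d$ to $1$. Here the key point is that the curve $\tilde{\mathcal{C}}$ passes through all three vertices $P_x,P_y,P_z$ of $\PP^2_\omega$ (as noted in \S\ref{sec:ratcusp} and visible from Figure~\ref{fig:rcc}), so the orbifold structure at the vertices is absorbed into $\cC_d$ and $\pi_1^{\orb}(\PP^2_\omega\setminus\tilde{\mathcal{C}})=\pi_1(\PP^2_\omega\setminus\tilde{\mathcal{C}})$, which by Proposition~\ref{prop:semidirect} is the semidirect product $(\ZZ/d')A\ltimes(\ZZ/2(\alpha_1\alpha_2+\alpha_3))B$ with $BAB^{-1}=A^{-1}$, where $d':=\gcd(\alpha_1+2\beta_2,\alpha_2+2\beta_1)$ and the abelianization is $\ZZ/2(\alpha_1\alpha_2+\alpha_3)$.

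With this in hand, the fundamental group of the Milnor fiber is the kernel of the composite
\[
\pi_1(\PP^2_\omega\setminus\tilde{\mathcal{C}})\longrightarrow
H_1(\PP^2_\omega\setminus\tilde{\mathcal{C}};\ZZ)\cong\ZZ/2(\alpha_1\alpha_2+\alpha_3)
\longrightarrow\ZZ/(\alpha_1\alpha_2+\alpha_3),
\]
where the last map is reduction; note $d=\alpha_1\alpha_2+\alpha_3$, not $2(\alpha_1\alpha_2+\alpha_3)$, because the generic meridian of $\tilde{\mathcal{C}}$ maps to a generator of $\ZZ/2d$ whose $d$-th multiple is the class sent to $1\in\ZZ/d$ — equivalently, the $d$-cyclic cover factors through the abelianization as reduction mod $d$. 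So I would describe explicitly, in the notation of Proposition~\ref{prop:semidirect}, which subgroup this kernel is: since in the abelianization $A$ has order $d'$ (dividing $d=\alpha_1\alpha_2+\alpha_3$, with $\gcd(d',\,d)=1$ in fact by the Remark following Proposition~\ref{prop:semidirect}, as $\gcd(d',\alpha_1\alpha_2+\alpha_3)=1$) and $B$ has order $2(\alpha_1\alpha_2+\alpha_3)$, the kernel of the reduction $\ZZ/2(\alpha_1\alpha_2+\alpha_3)\to\ZZ/(\alpha_1\alpha_2+\alpha_3)$ is the order-$2$ subgroup generated by $(\alpha_1\alpha_2+\alpha_3)[B]$. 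Pulling this back, the preimage in the semidirect product is the subgroup generated by $A$ and by $B^{\alpha_1\alpha_2+\alpha_3}$; since $B^{2(\alpha_1\alpha_2+\alpha_3)}$ is central (it generates the center, being the $B$-part) and $B^{\alpha_1\alpha_2+\alpha_3}$ conjugates $A$ to $A^{(-1)^{\alpha_1\alpha_2+\alpha_3}}=A^{-1}$ (as $\alpha_1\alpha_2+\alpha_3$ is odd, since $\alpha_1,\alpha_2$ cannot both be even), this preimage is itself a semidirect product $(\ZZ/d')A\ltimes(\ZZ/2)B'$ with $B'AB'^{-1}=A^{-1}$ — but here I must be careful: $B'=B^{\alpha_1\alpha_2+\alpha_3}$ has order $2$, and $d'$ is odd, so this is the dihedral-type group of order $2d'$... which is \emph{not} cyclic unless $d'\le 2$.

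The resolution of this apparent discrepancy is the heart of the matter and is the step I expect to be the main obstacle. The point is that the orbifold cover defining the Milnor fiber is \emph{not} simply the $d$-fold cover of the complement: one must impose the orbifold conditions at the vertices, i.e., the Milnor fiber corresponds to an epimorphism $\pi_1^{\orb}\to\ZZ/d$ that kills \emph{orbifold} meridians, and because $\tilde{\mathcal{C}}$ has the local structure described in Figure~\ref{fig:rcc} at $P_x,P_y,P_z$ (in particular the $\frac12(1,1)$, $\frac12(\alpha_i,\alpha_i)$ quotient points), the correct epimorphism sends the distinguished generators differently. Concretely, the orbifold relations force the generator $A$ (which records the difference $X_0X_1^{-1}=\ell_x\,u\ell_x^{-1}u^{-1}$ of the two "branch" meridians near a vertex) to map \emph{nontrivially and surjectively} onto $\ZZ/d'\subset\ZZ/d$ rather than into the kernel; correspondingly the Milnor-fiber group is the intersection of $\ker(\pi_1^{\orb}\to\ZZ/(\alpha_1\alpha_2+\alpha_3))$ with the relevant orbifold kernel, and a short computation with the matrix presentation of $K$ in the proof of Proposition~\ref{prop:semidirect} shows that this intersection is exactly the cyclic subgroup $\langle D\rangle\cong\ZZ/d'=\ZZ/\gcd(\alpha_1+2\beta_2,\alpha_2+2\beta_1)$. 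So the final step is: carefully track the epimorphism $\pi_1^{\orb}(\PP^2_\omega\setminus\tilde{\mathcal{C}})\to\ZZ/(\alpha_1\alpha_2+\alpha_3)$ induced by $F_{\omega,\beta_1,\beta_2}$ on meridians — using that the monomial $x^{\beta_1}y^{\beta_2}$-type exponents determine the covering class — verify it sends $[B]\mapsto$ a generator and $[A]\mapsto 0$ after accounting for orbifold indices, conclude the kernel is the $\langle A\rangle=\ZZ/d'$ factor, and observe this is cyclic of the asserted order. I would write this out by directly exhibiting the covering monodromy from the explicit form of $F$ rather than through the abstract semidirect-product bookkeeping, which I expect to make the cancellation of the factor $2(\alpha_1\alpha_2+\alpha_3)$ down to $d'$ transparent.
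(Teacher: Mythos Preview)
Your overall strategy --- identify the Milnor fiber as the cyclic cover of $\PP^2_\omega\setminus\tilde{\mathcal C}$ classified by the abelianization and then read off the kernel from Proposition~\ref{prop:semidirect} --- is exactly the paper's approach, and it works in one line once a single arithmetic slip is fixed.

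The slip is the $\omega$-degree of $F_{\omega,\beta_1,\beta_2}$. Each monomial of the Cremona map $y^{\alpha_1}z,\ x^{\alpha_2}z,\ x^{\beta_1}y^{\beta_2}$ has $\omega$-degree $\alpha_1\alpha_2+\alpha_3$, but the conic has degree~$2$, so $F_{\omega,\beta_1,\beta_2}$ has $\omega$-degree
\[
d=2(\alpha_1\alpha_2+\alpha_3),
\]
not $\alpha_1\alpha_2+\alpha_3$. (Just check the first term $y^{2\alpha_1}z^2$.) Thus the Milnor fiber is the $d$-cyclic cover with $d=2(\alpha_1\alpha_2+\alpha_3)$, and since the abelianization of $\pi_1(\PP^2_\omega\setminus\tilde{\mathcal C})$ is already $\ZZ/2(\alpha_1\alpha_2+\alpha_3)$, the covering epimorphism is \emph{exactly} the abelianization map. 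Hence the fundamental group of the Milnor fiber is the derived subgroup of the semidirect product $(\ZZ/d')\ltimes(\ZZ/2(\alpha_1\alpha_2+\alpha_3))$, $BAB^{-1}=A^{-1}$. In that group $[B,A]=A^{-2}$, and since $d'=\gcd(\alpha_1+2\beta_2,\alpha_2+2\beta_1)$ is odd, $\langle A^{-2}\rangle=\langle A\rangle$; conversely $G/\langle A\rangle$ is abelian, so the derived subgroup is precisely $\langle A\rangle\cong\ZZ/d'$.

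Your ``apparent discrepancy'' (a dihedral group of order $2d'$) and the subsequent attempt to repair it with extra orbifold relations at the vertices are both artifacts of taking $d=\alpha_1\alpha_2+\alpha_3$. You even note yourself that $\tilde{\mathcal C}$ contains the three vertices, so $\pi_1^{\orb}=\pi_1$ and there is nothing further to impose; that paragraph should simply be deleted once the degree is corrected.
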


More complicated fundamental groups can be obtained by choosing the orbifold variant. 
Let $(d_1,d_2,d_3)$ be a triple of pairwise coprime integers, $d_i>1$, let 
$\omega=(d_2 d_3,d_1 d_3,d_1 d_2)$ be a weight, and let 
\[
F_{d_1, d_2,d_3}(x,y,z)=x^{2 d_1}+y^{2d_2}+z^{2d_3}-2(x^{d_1} y^{d_2}+x^{d_1} z^{d_3}+ y^{d_2} z^{d_3}).
\]
As a direct consequence of Proposition~\ref{prop:group_orbifold} we obtain the following result.

\begin{prop}
The fundamental group of the Milnor fiber of $\{F_{d_1, d_2,d_3}=0\}$ is infinite and non-abelian. 
\end{prop}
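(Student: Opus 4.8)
The plan is to identify the fundamental group of the Milnor fiber $M:=\{F_{d_1,d_2,d_3}=1\}$ with the commutator subgroup of $G_{d_1,d_2,d_3}$ and then to extract both assertions from Proposition~\ref{prop:group_orbifold} and its proof. First I would observe that, writing $G(u,v,w):=u^2+v^2+w^2-2(uv+uw+vw)$ so that $\{G=0\}$ is the smooth conic tritangent to the three coordinate lines $X,Y,Z$ of \S\ref{sec:ratcusp}, one has $F_{d_1,d_2,d_3}(x,y,z)=G(x^{d_1},y^{d_2},z^{d_3})$, an $\omega$-homogeneous polynomial of $\omega$-degree $D:=2d_1d_2d_3$ for $\omega=(d_2d_3,d_1d_3,d_1d_2)$. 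By Euler's identity $F_{d_1,d_2,d_3}=1$ is a regular value away from the origin, so $M$ is a smooth affine surface; and by the description of Milnor fibers of weighted-homogeneous polynomials recalled above, $M$ is the $D$-cyclic orbifold cover of $\PP^2_\omega\setminus\cC_D$ (the tangent $\omega$-quasi-cone) classified by the monodromy epimorphism $\phi$. Transporting everything through the isomorphism $\PP^2_\omega\cong\PP^2$, $[x:y:z]_\omega\mapsto[x^{d_1}:y^{d_2}:z^{d_3}]$, which carries $\cC_D$ to $\{G=0\}$ and the orbifold structure of $\PP^2_\omega$ to $\PP^2$ with orbifold weights $d_1,d_2,d_3$ along $X,Y,Z$, one gets $\pi_1^{\orb}(\PP^2_\omega\setminus\cC_D)=G_{d_1,d_2,d_3}$. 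Since $F_{d_1,d_2,d_3}$ is reduced, $M$ is connected and $\phi$ is onto; as $G_{d_1,d_2,d_3}^{\mathrm{ab}}=\ZZ/D$ by Proposition~\ref{prop:group_orbifold}, the map $\phi$ is the abelianization followed by an isomorphism, whence $\pi_1(M)=\ker\phi=[G_{d_1,d_2,d_3},G_{d_1,d_2,d_3}]$.

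Once this identification is in place, infiniteness is immediate: $[G_{d_1,d_2,d_3},G_{d_1,d_2,d_3}]$ has finite index $D$ in $G_{d_1,d_2,d_3}$, and $G_{d_1,d_2,d_3}$ is infinite by the proof of Proposition~\ref{prop:group_orbifold}, since its index-two subgroup surjects onto the triangle group $\Delta(d_1,d_2,d_3)$, which is infinite in all relevant cases: it is hyperbolic unless $\{d_1,d_2,d_3\}=\{2,3,5\}$ (the only pairwise coprime triple of integers greater than $1$ for which $\Delta$ is not hyperbolic), and for $\{2,3,5\}$ the \texttt{GAP}-assisted computation in that same proof shows $G_{2,3,5}$ is still infinite. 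For non-abelianness I would argue by contradiction: were $[G_{d_1,d_2,d_3},G_{d_1,d_2,d_3}]$ abelian, $G_{d_1,d_2,d_3}$ would be metabelian, hence so would its subquotient $\Delta(d_1,d_2,d_3)$; but $\Delta(d_1,d_2,d_3)$ is not solvable, as it contains a non-abelian free subgroup when hyperbolic and equals the non-abelian simple group $A_5$ when $\{d_1,d_2,d_3\}=\{2,3,5\}$, a contradiction.

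The step I expect to require the most care is the identification $\pi_1(M)=[G_{d_1,d_2,d_3},G_{d_1,d_2,d_3}]$: one must check that the restriction of the weighted-homogeneous Milnor fibration over the orbifold complement $\PP^2_\omega\setminus\cC_D$ is exactly the $D$-cyclic orbifold cover with orbifold fundamental group $G_{d_1,d_2,d_3}$, that $M$ carries no residual orbifold locus (equivalently, that the non-trivial isotropy of the geometric monodromy on $M$ over $P_x,P_y,P_z$ accounts precisely for the orbifold weights of $\PP^2_\omega$ at those points), and that the monodromy character is surjective. Everything after that is a purely group-theoretic consequence of Proposition~\ref{prop:group_orbifold}.
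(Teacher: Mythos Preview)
Your proposal is correct and is exactly the argument the paper has in mind: the paper states the result ``as a direct consequence of Proposition~\ref{prop:group_orbifold}'' with no further proof, and what you have written is precisely the unpacking of that claim---identifying $\pi_1(M)$ with the kernel of $G_{d_1,d_2,d_3}\twoheadrightarrow\ZZ/D$, recognizing this kernel as the commutator subgroup because $G^{\mathrm{ab}}\cong\ZZ/D$, and then reading off both infiniteness and non-abelianness from the surjection of the index-two subgroup $K_{d_1,d_2,d_3}$ onto the triangle group established in the proof of Proposition~\ref{prop:group_orbifold}. Your metabelian/subquotient argument for non-abelianness is a clean way to phrase the second conclusion; the paper leaves even this implicit.
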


\section{Weighted L{\^e}-Yomdin surface singularities}\label{sec:LY}

In this section we study the relationship between (weighted) projective plane curves and normal surface singularities
whose link is a rational (or integral) homology sphere. 

\subsection{The determinant of a normal surface singularity}\label{sec:determinant}
Let $(S,0)$ be a germ of normal surface singularity and let $K$ be its link.
It is well known that $K$ is a graph manifold whose plumbing decorated graph is the dual graph $\Gamma$ of a simple normal crossing
resolution. Each vertex $v$ of $\Gamma$ is decorated with two numbers $(g_v,e_v)$, where $g_v$ is the genus of the corresponding
irreducible component $E_v$ and $e_v$ is its self-intersection. Let $A$ be the intersection matrix of the graph; recall
that $A$ is negative definite. In a natural way, $A$ is also the presentation matrix of an abelian group yielding the following classical result.

\begin{prop}
The free part of $H_1(K;\ZZ)$ has rank $2\sum_v g_v+\rk H_1(\Gamma;\ZZ)$. The torsion part is isomorphic 
to $\coker A$ and, in particular, its cardinality is~$\det (-A)$.
\end{prop}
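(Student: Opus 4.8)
The plan is to recall the standard plumbing calculus picture: the link $K$ is the boundary of the plumbed $4$-manifold associated to $\Gamma$, and its first homology can be read off from a Mayer--Vietoris / handle-decomposition argument. I would phrase the result as a consequence of the long exact sequence of the pair $(W,K)$, where $W$ is the resolution (or its plumbing model), together with Lefschetz/Poincar\'e duality on $W$.

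First I would set up the topology. Let $\pi\colon \hat S\to (S,0)$ be a simple normal crossing resolution with exceptional divisor $E=\bigcup_v E_v$, and let $W$ be a regular neighborhood of $E$ in $\hat S$, so that $\partial W=K$ and $W$ deformation retracts onto $E$. Then $H_*(W;\ZZ)\cong H_*(E;\ZZ)$, and since $E$ is a normal crossing configuration of smooth curves $E_v$ of genus $g_v$ meeting transversally according to $\Gamma$, a Mayer--Vietoris computation gives $H_1(E;\ZZ)\cong \ZZ^{2\sum_v g_v}\oplus \ZZ^{\rk H_1(\Gamma;\ZZ)}$ (the first summand from the genera of the components, the second from the cycles in the dual graph), while $H_2(E;\ZZ)\cong\ZZ^{\#V(\Gamma)}$ is freely generated by the fundamental classes $[E_v]$.

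Next I would run the long exact sequence of $(W,K)$ in homology, using $H_k(W,K)\cong H^{4-k}(W)$ by Lefschetz duality (valid since $W$ is a compact oriented $4$-manifold with boundary). The relevant portion is
\[
H_2(K)\to H_2(W)\xrightarrow{\ j\ } H_2(W,K)\cong H^2(W)\to H_1(K)\to H_1(W)\to H_1(W,K)\cong H^3(W)=0,
\]
where the last vanishing holds because $H^3(W)\cong H_1(W,K)$ and, dually, $H_3(W)=H^1(W,K)$; more directly $W$ retracts to the $2$-complex $E$ so $H_k(W)=0$ for $k\ge 3$, whence $H^3(W)\cong H_1(W)$... here I must be careful: the correct statement is that the composite $H_2(W)\to H_2(W,K)\cong H^2(W)\xrightarrow{\text{ev}}\mathrm{Hom}(H_2(W),\ZZ)$ is precisely the intersection form $A$ once we identify $H_2(W)\cong\ZZ^{\#V}$ via the classes $[E_v]$ (using that $E$ is a deformation retract and the $[E_v]$ form a basis). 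Since $A$ is negative definite, it is injective, so $H_2(K)\to H_2(W)$ is zero and we obtain a short exact sequence $0\to \coker(A)\to H_1(K)\to H_1(W)\to 0$; as $H_1(W)\cong H_1(E;\ZZ)$ is free, this sequence splits, giving $H_1(K)\cong \coker(A)\oplus \ZZ^{2\sum g_v+\rk H_1(\Gamma)}$. The torsion subgroup is therefore exactly $\coker(A)$, with cardinality $|\det A|=\det(-A)$ since $A$ is negative definite of size $\#V$.

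**The main obstacle** is bookkeeping rather than conceptual: one must justify carefully that, under the identification $H_2(W)\cong H^2(W)$ furnished by the retraction onto $E$ and the long exact sequence, the map $j\colon H_2(W)\to H_2(W,K)$ corresponds precisely to the intersection matrix $A$ (equivalently, that the self-intersection numbers $e_v$ computed in $\hat S$ agree with the pairings $[E_v]\cdot[E_w]$ in $W$), and that the free ranks on the $H_1$ side add exactly as claimed with no hidden contributions from $H_2(K)$. Both points are classical (see Mumford's resolution paper and Neumann's plumbing calculus); I would cite them rather than reprove the plumbing formalism, and merely remark that negative-definiteness of $A$ --- itself a theorem of Mumford / Grauert --- is what forces $H_2(K)\to H_2(W)$ to vanish and makes $\coker A$ finite of order $\det(-A)$.
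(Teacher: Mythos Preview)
The paper does not actually supply a proof of this proposition: it is stated as a classical fact (essentially due to Mumford and later codified in Neumann's plumbing calculus) and immediately used to justify the definition of $\det S$. Your argument is the standard one and is correct in outline; there is nothing to compare against.

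One small clean-up. In the passage where you compute $H_1(W,K)\cong H^3(W)$, the aside ``whence $H^3(W)\cong H_1(W)$'' is a slip (that isomorphism would require $W$ to be closed), and you rightly flag it. The clean statement is simply: since $W$ retracts onto the $2$-complex $E$, we have $H_3(W)=0$ and $H_2(W)\cong\ZZ^{\#V(\Gamma)}$ is free, so by universal coefficients $H^3(W)=0$; hence $H_1(W,K)=0$ and the map $H_1(K)\to H_1(W)$ is surjective. Likewise, the identification $H_2(W,K)\cong H^2(W)\cong\mathrm{Hom}(H_2(W),\ZZ)$ uses that $H_1(W)\cong H_1(E)$ is free (so the $\mathrm{Ext}$ term vanishes). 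With these two points made explicit, your short exact sequence
\[
0\longrightarrow \coker A\longrightarrow H_1(K;\ZZ)\longrightarrow H_1(E;\ZZ)\longrightarrow 0
\]
follows, splits, and gives exactly the claimed decomposition.
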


As a direct consequence of this, the determinant $\det (-A)$ does not depend on the resolution.
This justifies the definition of the determinant of a normal surface singularity.

\begin{defn}
The \emph{determinant} $\det S$ of a normal surface singularity~$S$ is defined as $\det (-A)$, where $A$
is the intersection matrix of any resolution of~$S$.
\end{defn}

As a consequence, one has the following combinatorial criteria to detect rational (resp.~integral) homology 
sphere singularities, that is, surface singularities whose link is a rational (resp.~integral) homology 
sphere.

\begin{cor}
\label{cor:QHS}
The surface singularity $S$ is a rational (resp.~integral) homology sphere if and only if all $g_v$'s vanish 
and $\Gamma$ is a tree (resp.~and $\det S=1$). 
\end{cor}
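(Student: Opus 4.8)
The plan is to deduce Corollary~\ref{cor:QHS} directly from the Proposition immediately preceding it, which identifies $H_1(K;\ZZ)$ with a group whose free part has rank $2\sum_v g_v+\rk H_1(\Gamma;\ZZ)$ and whose torsion part is $\coker A$ of cardinality $\det S$. First I would recall that the link $K$ of a normal surface singularity is a rational homology sphere precisely when $H_1(K;\ZZ)$ is finite (equivalently $b_1(K)=0$), and an integral homology sphere precisely when $H_1(K;\ZZ)=0$, since $K$ is a closed oriented $3$-manifold and Poincar\'e duality together with the universal coefficient theorem then force all reduced homology to vanish.

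Next I would translate these two conditions using the Proposition. The free part of $H_1(K;\ZZ)$ vanishes if and only if $2\sum_v g_v+\rk H_1(\Gamma;\ZZ)=0$; since both summands are non-negative, this is equivalent to $g_v=0$ for every vertex $v$ \emph{and} $\rk H_1(\Gamma;\ZZ)=0$. For the connected graph $\Gamma$, $\rk H_1(\Gamma;\ZZ)$ is its first Betti number as a $1$-complex, which is zero if and only if $\Gamma$ is a tree. This yields the rational homology sphere criterion. For the integral case, one additionally needs the torsion part $\coker A$ to be trivial, i.e.\ $|\coker A|=\det S=1$; so $S$ is an integral homology sphere if and only if it is a rational homology sphere and $\det S=1$.

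I do not anticipate a serious obstacle here: the statement is essentially a repackaging of the preceding Proposition together with standard facts about $3$-manifolds. The one point requiring a word of care is the passage from ``$H_1(K;\ZZ)$ finite'' (resp.\ trivial) to ``$K$ is a rational (resp.\ integral) homology sphere''; this uses that $K$ is a closed oriented $3$-manifold, so $H_0(K;\ZZ)=\ZZ$, $H_3(K;\ZZ)=\ZZ$, $H_2(K;\ZZ)\cong H^1(K;\ZZ)$ is free of rank $b_1(K)$ by Poincar\'e duality and universal coefficients, and hence $b_1(K)=0$ forces $H_2(K;\ZZ)=0$ and $H_1(K;\ZZ)$ torsion; if moreover $H_1(K;\ZZ)=0$ then $K$ has the integral homology of $S^3$. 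Assembling these observations gives the corollary.

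\begin{proof}
The link $K$ is a closed connected oriented $3$-manifold. By the preceding Proposition, $H_1(K;\ZZ)$ has free rank $r:=2\sum_v g_v+\rk H_1(\Gamma;\ZZ)$ and torsion subgroup $\coker A$ of order $\det S$. Since $K$ is a rational homology sphere if and only if $H_*(K;\Q)\cong H_*(S^3;\Q)$, which by Poincar\'e duality for the oriented $3$-manifold $K$ amounts to $b_1(K)=0$, i.e.\ $r=0$, and since the two summands defining $r$ are non-negative, $r=0$ holds exactly when $g_v=0$ for all $v$ and $\rk H_1(\Gamma;\ZZ)=0$; as $\Gamma$ is connected, the latter is equivalent to $\Gamma$ being a tree. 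This proves the first assertion. For the second, $K$ is an integral homology sphere if and only if $H_1(K;\ZZ)=0$, i.e.\ both its free part and its torsion part vanish; by the above this means $K$ is already a rational homology sphere and, in addition, $|\coker A|=\det S=1$.
\end{proof}
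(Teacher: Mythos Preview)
Your proof is correct and follows exactly the approach the paper intends: the corollary is stated there without proof, simply as a direct consequence of the preceding Proposition, and your argument spells out that deduction carefully. The only point you rely on that is not explicitly stated (connectedness of $\Gamma$) is standard for the exceptional divisor of a resolution of a normal surface singularity.
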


\subsection{Superisolated and L{\^e}-Yomdin singularities} 
In~\cite{fblmn}, the authors relate hypersurface singularities whose link is a rational homology sphere 
with rational cuspidal curves using superisolated singularities. In our search for more examples of surface
singularities whose link is a rational (or integral) homology sphere, a generalization of this method will
be discussed here.  For the sake of completeness we present a classical result.

\begin{defn}
Let $(S,0)\subset(\CC^3,0)$ be the germ of a hypersurface singularity with equation $F=f_d+f_{d+k}+\dots$, 
where the previous decomposition is the decomposition in homogeneous parts. Assume $f_d\neq 0$, $k>0$. Let
$\cC_m:=V_{\PP}(f_m)$ denote the projective zero locus in~$\PP^2$ of the homogeneous polynomial $f_m$. We
say that $S$ is a \emph{L{\^e}-Yomdin} singularity if $\sing(\cC_d)\cap \cC_{d+k}=\emptyset$. If $k=1$,
$S$ is called a \emph{superisolated} singularity.
\end{defn}

Superisolated singularities were introduced by Luengo~\cite{lu:87a}: they can be resolved by one blow-up.
In~\cite{LMN-LinksSuperisolated}, the authors show that the link of a superisolated singularity is a rational 
homology sphere if and only if all the irreducible components of $\cC_d$ are cuspidal rational and if the curve 
is reducible they only intersect at one point. Besides the smooth case, no other one provides an integral 
homology sphere as can be deduced from the following result in~\cite{lu:87a}. We reproduce the proof since it 
will be generalized for other classes of singularities.

\begin{prop}[{\cite{lu:87a}}]\label{prop:luengo}
Let $S$ be a superisolated singularity with tangent cone $\cC_d$ of degree~$d$. 
Let $\Pi:\widehat{\CC}^3\to\CC^3$ be the blow-up of $0\in S\subset \CC^3$ and 
$\pi:\hat{S}\to S$ the restriction of $\Pi$ to the strict transform of $S$. 
If $E\cong\PP^2$ is the exceptional divisor of $\Pi$, then the exceptional divisor of $\pi$ is $\cC_d=E\cap\hat{S}$.

Moreover, if $\cC_{d,1},\dots,\cC_{d,\s}$ denote the irreducible components of $\cC_d$ and $\dd_i:=\deg\cC_{d,i}$, 
then
\[
(\cC_{d,i}\cdot\cC_{d,i})_{\hat{S}}=-\dd_i(d-\dd_i+1),\quad (\cC_{d,i}\cdot\cC_{d,j})_{\hat{S},P}=
(\cC_{d,i}\cdot\cC_{d,j})_{\PP^2,P}, i\neq j, P\in\sing\cC_d.
\]
\end{prop}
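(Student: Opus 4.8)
The plan is to pass to the standard affine charts of the blow-up $\Pi$, read off the local equation of the strict transform, and then rephrase both equalities as intersection numbers on the smooth surface $\hat S$.

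\emph{Charts and the exceptional divisor.} Write $F=f_d+f_{d+1}+\dots$ (here $k=1$), with $f_d$ reduced. In the chart $(x,u,v)\mapsto(x,xu,xv)$ of $\Pi$, where $E=\{x=0\}\cong\PP^2$, one computes $F\circ\Pi=x^d\bigl(f_d(1,u,v)+xf_{d+1}(1,u,v)+\dots\bigr)$; hence $\Pi^*S=\hat S+dE$, the strict transform has local equation $\hat F:=f_d(1,u,v)+xf_{d+1}(1,u,v)+\dots$, and $\hat S\cap E=\{x=f_d(1,u,v)=0\}$. Gluing the three charts, $\hat S\cap E=\cC_d$ as reduced subschemes of $E\cong\PP^2$, and since $\pi=\Pi|_{\hat S}$ and $\Pi^{-1}(0)=E$, the exceptional divisor of $\pi$ is $\pi^{-1}(0)=\hat S\cap E=\cC_d$. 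I would also note that $\hat S$ is smooth along $\cC_d$: at a smooth point of $\cC_d$ the $(u,v)$-differential of $f_d(1,u,v)$ is nonzero, while at $P\in\sing\cC_d$ the superisolated hypothesis $\sing\cC_d\cap\cC_{d+1}=\emptyset$ gives $\partial\hat F/\partial x|_P=f_{d+1}(1,u_P,v_P)\neq 0$; so $\hat S$ is a smooth surface, $\pi$ a resolution, and intersection theory on $\hat S$ is the usual one. In particular $\cC_d=E|_{\hat S}=\sum_i\cC_{d,i}$ as divisors on $\hat S$.

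\emph{From an auxiliary number to both formulas.} Since $\mathcal O_{\hat S}(\cC_d)=\mathcal O_{\hat S}(E|_{\hat S})=\mathcal O_{\widehat{\CC}^3}(E)|_{\hat S}$ and $\mathcal O_E(E)=\mathcal O_{\PP^2}(-1)$, restricting to $\cC_{d,i}\subset E$ gives
\[
(\cC_{d,i}\cdot\cC_d)_{\hat S}=\deg\bigl(\mathcal O_{\widehat{\CC}^3}(E)|_{\cC_{d,i}}\bigr)=\deg\bigl(\mathcal O_{\PP^2}(-1)|_{\cC_{d,i}}\bigr)=-\dd_i .
\]
For the off-diagonal terms, fix $P\in\sing\cC_d$ and a chart with $P=[1:u_P:v_P]\in E=\{x=0\}$, and factor $f_d=\prod_m g_m$ with $g_m$ defining $\cC_{d,m}$. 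Since $\hat S$ is smooth and transverse to $E$ at $P$ (as $\partial\hat F/\partial x|_P\neq 0$), the implicit function theorem writes $\hat S$ near $P$ as a graph $x=\varphi(u,v)$, so the projection $(x,u,v)\mapsto(u,v)$ restricts to analytic isomorphisms of germs $\hat S\to(\CC^2,(u_P,v_P))$ and $E\to(\CC^2,(u_P,v_P))$, and under both of them the germ of $\cC_{d,m}$ (which lies in $E\cap\hat S$, hence has $x\equiv 0$) corresponds to the same reduced germ $\{g_m(1,u,v)=0\}$. As local intersection multiplicity on a smooth surface is an analytic invariant, for $i\neq j$ meeting at $P$,
\[
(\cC_{d,i}\cdot\cC_{d,j})_{\hat S,P}=\dim_\CC\CC\{u,v\}\big/\bigl(g_i(1,u,v),g_j(1,u,v)\bigr)=(\cC_{d,i}\cdot\cC_{d,j})_{\PP^2,P},
\]
which is the second assertion. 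Summing over $j\neq i$ and all $P$ gives $\sum_{j\neq i}(\cC_{d,i}\cdot\cC_{d,j})_{\hat S}=\sum_{j\neq i}\dd_i\dd_j=\dd_i(d-\dd_i)$, and combining with $\cC_d=\sum_m\cC_{d,m}$ and the auxiliary number,
\[
(\cC_{d,i}\cdot\cC_{d,i})_{\hat S}=(\cC_{d,i}\cdot\cC_d)_{\hat S}-\dd_i(d-\dd_i)=-\dd_i-\dd_i(d-\dd_i)=-\dd_i(d-\dd_i+1).
\]

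\emph{Main obstacle.} The chart computation and the Picard-group bookkeeping are routine. The one step requiring care is the local comparison of intersection multiplicities: one must be sure that near a point of $\sing\cC_d$ the pair $(\hat S,\cC_d)$ is analytically the pair $(\PP^2,\cC_d)$, and this is exactly what the superisolated condition buys — it makes $\hat S$ smooth and transverse to $E$ along $\cC_d$, hence a graph over the $E$-coordinates. One should also keep track of scheme structures so that $\cC_d$ occurs with multiplicity one on $\hat S$, which again uses that $\cC_d$ is reduced.
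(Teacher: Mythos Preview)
Your proof is correct and follows essentially the same strategy as the paper: pass to an affine chart of the blow-up, identify $\cC_d=E\cap\hat S$, use the normal bundle $\mathcal O_E(E)\cong\mathcal O_{\PP^2}(-1)$ (the paper phrases this as $e(E)=-L$) to get $(\cC_d\cdot\cC_{d,i})_{\hat S}=-\dd_i$, observe that $(\hat S,\cC_d)$ and $(E,\cC_d)$ are locally isomorphic near each $P\in\sing\cC_d$ to match the off-diagonal terms, and then solve for the self-intersection. Your write-up is more explicit than the paper's --- you spell out the smoothness of $\hat S$ via the superisolated hypothesis and justify the local isomorphism through the implicit function theorem --- but the underlying argument is the same.
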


\begin{proof}
Let us assume that $[0:0:1]\in\sing\cC_d$. We can fix the usual chart of the blowing-up. Assume that 
$S=\{F=0\}$, where $F=f_d+f_{d+1}+\dots$; in the chart $(x,y,z)\mapsto(x z,y z,z)$ and
$E=\{z=0\}$, $\hat{S}=\{f_d(x,y,1)+z(f_{d+1}(x,y,1)+\dots)=0\}$, i.e., $\cC_d=\{z=f_d(x,y,1)=0\}$.
In the neighborhood of $P$, $(E,\cC_d)$ and $(\hat{S},\cC_d)$ are isomorphic. We deduce that 
for $i\neq j$, $(\cC_{d,i}\cdot\cC_{d,j})_{\hat{S},P}=(\cC_{d,i}\cdot\cC_{d,j})_{\PP^2,P}$.

The surfaces $E$ and $\hat{S}$ are \emph{generically} transversal, namely outside $\sing\cC_d$. 
The Euler class $e(E)=-L$, where $L$ is a line in~$E$. Then,
\[
(\cC_d\cdot\cC_{d,i})_{\hat{S}}=(e(E)\cdot\cC_{d,i})_{\PP^2}=-\dd_i.
\]
Also
\begin{gather*}
(\cC_d\cdot\cC_{d,i})_{\hat{S}}=(\cC_{d,i}\cdot\cC_{d,i})_{\hat{S}}
+\sum_{j\neq i}(\cC_{d,j}\cdot\cC_{d,i})_{\hat{S}}=
(\cC_{d,i}\cdot\cC_{d,i})_{\hat{S}}+\sum_{j\neq i}(\cC_{d,j}\cdot\cC_{d,i})_{\PP^2}=\\
(\cC_{d,i}\cdot\cC_{d,i})_{\hat{S}}+\dd_i(d-\dd_i),
\end{gather*}
and the result follows.
\end{proof}

Although $\pi$ is not necessarily a resolution with normal crossings, $\det S$ can be recovered from 
it using its intersection matrix; it is a classical result which will follow from a later proposition.

\begin{cor}\label{cor:sis}
If $S$ is as above, then
$$\det S=(d+1)^{\s-1}\cdot \dd_1\cdot\ldots\cdot \dd_\s.$$ 
In particular, if $\cC_d$ is irreducible, then $\det S=d$.
\end{cor}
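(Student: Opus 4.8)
The plan is to make the intersection matrix of the partial resolution $\pi\colon\hat S\to S$ of Proposition~\ref{prop:luengo} completely explicit and then extract its determinant by a one–line linear algebra argument. The essential external input I would invoke --- classical, and proved in~\S\ref{sec:determinant} --- is that $\det S$ equals $\det(-A)$ for $A$ the intersection matrix of $\pi$, even though $\pi$ need not be a normal crossings resolution (its exceptional curve $\cC_d$ has cusps).

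First I would record the intersection numbers on $\hat S$. The exceptional divisor of $\pi$ is $\cC_d=\cC_{d,1}\cup\dots\cup\cC_{d,\s}$, and any two distinct components $\cC_{d,i}$, $\cC_{d,j}$ meet only at points of $\sing\cC_d$; hence, summing the local contributions supplied by Proposition~\ref{prop:luengo} and invoking B\'ezout's theorem in $\PP^2$,
\[
(\cC_{d,i}\cdot\cC_{d,j})_{\hat S}=\sum_{P\in\sing\cC_d}(\cC_{d,i}\cdot\cC_{d,j})_{\PP^2,P}=\dd_i\dd_j \quad(i\neq j),
\qquad (\cC_{d,i}\cdot\cC_{d,i})_{\hat S}=-\dd_i(d-\dd_i+1).
\]
Writing $A$ for the resulting $\s\times\s$ matrix, $\Delta:=\mathrm{diag}(\dd_1,\dots,\dd_\s)$ and $\mathbf{v}:=(\dd_1,\dots,\dd_\s)^{\mathsf T}$, a look at the diagonal and off-diagonal entries shows that $-A=(d+1)\Delta-\mathbf{v}\mathbf{v}^{\mathsf T}$, since $(d+1)\dd_i-\dd_i^2=\dd_i(d+1-\dd_i)$.

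Next I would compute the determinant of this rank-one perturbation via the matrix determinant lemma:
\[
\det(-A)=\det\bigl((d+1)\Delta\bigr)\left(1-\frac{1}{d+1}\,\mathbf{v}^{\mathsf T}\Delta^{-1}\mathbf{v}\right)
=(d+1)^{\s}\dd_1\cdots\dd_\s\left(1-\frac{d}{d+1}\right),
\]
where $\mathbf{v}^{\mathsf T}\Delta^{-1}\mathbf{v}=\sum_i\dd_i=\deg\cC_d=d$. This collapses to $(d+1)^{\s-1}\dd_1\cdots\dd_\s$, which by the cited fact is $\det S$; taking $\s=1$ gives $\det S=d$ when $\cC_d$ is irreducible. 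The only point demanding care is the reduction $\det S=\det(-A)$ for the non-normal-crossings $\pi$, i.e.\ the invariance of the determinant under the further blow-ups resolving the cusps of $\cC_d$ --- exactly the content of the determinant-via-partial-resolution statement of~\S\ref{sec:determinant}; granting it, everything else is the elementary computation above.
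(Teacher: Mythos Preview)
Your proof is correct and follows the same overall strategy as the paper: extract the intersection matrix $A$ from Proposition~\ref{prop:luengo} (with off-diagonal entries $\dd_i\dd_j$ via B\'ezout) and then compute $\det(-A)$ by elementary linear algebra, invoking the partial-resolution determinant result of~\S\ref{sec:determinant} for the non-normal-crossings~$\pi$. The only cosmetic difference is in the determinant step: the paper replaces the first row by the sum of all rows (yielding $-(\dd_1,\dots,\dd_\s)$) and then clears the off-diagonal entries in the remaining rows to reach a triangular form, whereas you recognize $-A=(d+1)\Delta-\mathbf{v}\mathbf{v}^{\mathsf T}$ and apply the matrix determinant lemma; both are one-line computations on the same matrix.
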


\begin{proof}
By Proposition~\ref{prop:luengo}, the diagonal terms of the intersection matrix for~$\pi$ equal $-\dd_i(d-\dd_i+1)$ 
and the non-diagonal terms are $\dd_i\cdot \dd_j$. Replacing the first row by the sum of all rows, one obtains 
$-(\dd_1,\dots,\dd_\s)$. If we add the new first row multiplied by $\dd_i$ times the $i^{\text{th}}$-row ($i>1$), 
all the non-diagonal terms vanish and the diagonal term becomes $-\dd_i(d+1)$.
\end{proof}

For L{\^e}-Yomdin singularities we follow the same strategy. If $S=\{F=0\}$ with $F=f_d+f_{d+k}+\dots$, and 
we keep the notation above, the main difference is that $\hat{S}$ is no longer smooth, in general. If $P\in\sing\cC_d$, then
the local equation of $\hat{S}$ at~$P$ is $z^k-f(x,y)=0$ where $f(x,y)=0$ is the local equation of $\cC_d$ at~$P$.
Intersection theory can be used also in normal surfaces, see~\cite{mum:61,Fulton-Intersection} for definitions 
and~\cite{AMO-Intersection} for useful tips. 
As the following result shows the intersection form of a partial resolution is also useful.

\begin{lem}\label{lem:detS}
Let  $(S,0)$ be a normal surface singularity and let $\pi:(X,D)\to(S,0)$ be a proper birational morphism which is
an isomorphism outside~$D=\pi^{-1}(0)$ on the normal surface $X$. Let $A$ be the intersection matrix for~$D$. Then,
\[
\det S=\det(-A)\prod_{P\in D}\det(X,P).
\]
\end{lem}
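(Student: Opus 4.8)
The plan is to relate a full resolution of $S$ to the partial resolution $\pi$ by resolving the remaining singularities of $X$, and then to use the block structure of the resulting intersection matrix together with the fact (from the preceding Proposition) that the determinant of $-A$ counts the torsion of $H_1(K;\ZZ)$, equivalently the order of $\coker A$. First I would choose, for each singular point $P\in X$ lying on $D$, a good resolution $\rho_P$ of the germ $(X,P)$; since $\pi$ is an isomorphism away from $D$, the singular locus of $X$ is contained in $D$, and away from $D$ the map $\pi$ already gives a resolution of $S$. Composing $\pi$ with all the $\rho_P$ produces a resolution $\tilde\pi:\tilde X\to S$ whose exceptional divisor $\tilde D$ is the union of the strict transform $\tilde D_0$ of $D$ together with the exceptional divisors $F_P$ of the various $\rho_P$.

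The key step is to compute $\det(-\tilde A)$, where $\tilde A$ is the intersection matrix of $\tilde D$, in terms of $A$ and the local data at the points $P$. Order the vertices of $\tilde D$ so that the vertices of $\tilde D_0$ come first and the vertices of each $F_P$ come in consecutive blocks afterward. The crucial observation is that a vertex of $F_P$ meets a vertex of $\tilde D_0$ only if that component of $D$ passed through $P$, and the self-intersections of the strict transforms in $\tilde D_0$ differ from those in $D$ by correction terms supported at the points $P$. The cleanest way to package this is to note that $\coker\tilde A$ and $\coker A\oplus\bigoplus_P(\text{local class group contribution of }(X,P))$ — more precisely, one uses that the local determinant $\det(X,P)$ equals the order of the local divisor class group (discriminant group) of the lattice associated to $F_P$, which is exactly $\lvert\det(-A_{F_P})\rvert$ after accounting for how $\tilde D_0$ caps off the chains. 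A Mayer–Vietoris / excision argument on the links, or equivalently a direct determinant computation by column operations that clear the off-block entries (subtracting suitable rational combinations, valid since the relevant blocks are negative definite hence invertible over $\Q$), yields
\[
\det(-\tilde A)=\det(-A)\cdot\prod_{P\in D}\det(X,P).
\]
Since $\tilde\pi$ is a genuine resolution, the left-hand side equals $\det S$ by definition, giving the claim.

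The main obstacle I anticipate is bookkeeping the self-intersection corrections: when one resolves $(X,P)$, the strict transform in $\tilde X$ of a component $E_v$ of $D$ through $P$ has self-intersection $e_v$ minus a positive rational (an intersection-theoretic defect on the normal surface $X$, computable as in~\cite{AMO-Intersection}), and simultaneously $E_v$ acquires intersections with the new vertices in $F_P$. One must verify that these two effects are precisely the Schur-complement pieces that, upon eliminating the $F_P$-blocks, restore the matrix $A$ exactly (with the \emph{integer} self-intersections $e_v$ coming from a resolution of $S$, not of $X$). Concretely, the Schur complement of the $F_P$-block in $\tilde A$, as seen by the $\tilde D_0$-vertices, contributes back exactly the local correction, so that eliminating all $F_P$-blocks returns $A$; the determinant multiplicativity of Schur complements then gives $\det(-\tilde A)=\det(-A)\cdot\prod_P\det(-A_{F_P})$, and identifying $\det(-A_{F_P})$ with $\det(X,P)$ (again by the torsion-of-$H_1$ interpretation applied to the local link of $(X,P)$) finishes the argument. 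I would also remark that this simultaneously reproves the classical fact that $\det S$ is independent of the chosen resolution, recovering Corollary~\ref{cor:sis} as the special case $S$ superisolated, $X=\hat S$.
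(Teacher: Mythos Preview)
Your approach is correct and follows the same overall strategy as the paper: resolve the remaining singularities of $X$ and compare the intersection matrix of the full resolution with $A$ and the local matrices. The paper's execution, however, bypasses the Schur-complement bookkeeping you anticipate. Rather than working with strict transforms of the components of $D$ and then clearing off-diagonal blocks by column operations, the paper performs a single change of basis replacing each strict transform by its \emph{total transform} $\sigma^*D_i$; this is upper-triangular with $1$'s on the diagonal, so the determinant is unchanged. By the projection formula one has $(\sigma^*D_i\cdot\sigma^*D_j)_Y=(D_i\cdot D_j)_X$, which recovers the entries of $A$ exactly, and $(\sigma^*D_i\cdot F)_Y=0$ for every exceptional component $F$ of $\sigma$, so the off-diagonal blocks vanish for free. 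The new matrix is therefore the block-diagonal sum of $A$ and the local intersection matrices, and the formula follows immediately. This is precisely the Schur complement you describe, but the geometric packaging means there is no self-intersection correction to track: the very definition of Mumford's rational intersection number on the normal surface $X$ is what makes the total-transform block equal $A$ on the nose.
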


\begin{proof}
Note first that the product in the formula is finite since only a finite number of singular points may arise.
Let $\sigma:(Y,E)\to(X,D)$ be a resolution of the singularities of~$X$. Let $B$ the intersection matrix of~$E$.
Instead of expressing this matrix in terms of the irreducible components of~$E$, we replace the strict transforms
of the components of~$D$ by their total transforms. 

Then, $B$ is replaced by a matrix~$\tilde{B}$, with the same determinant, which is a diagonal sum of $A$ and 
the intersection matrices of the singular points. Then,
\begin{equation*}
\det S=\det(-B)=\det(-\tilde{B})=\det(-A)\prod_{P\in\sing X}\det(X,P).
\qedhere
\end{equation*}
\end{proof}

\begin{prop}\label{prop:luengo-ly}
Let $S$ be a $k$-L{\^e}-Yomdin singularity with tangent cone $\cC_d$ of degree~$d$. 
With the notation of Proposition{\rm~\ref{prop:luengo}},
\[
(\cC_{d,i}\cdot\cC_{d,i})_{\hat{S}}=-\frac{\dd_i(d-\dd_i+k)}{k},\  
(\cC_{d,i}\cdot\cC_{d,j})_{\hat{S},P}=\frac{(\cC_{d,i}\cdot\cC_{d,j})_{\PP^2,P}}{k}, i\neq j, P\in\sing\cC_d.
\]
\end{prop}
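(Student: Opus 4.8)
The plan is to imitate the proof of Proposition~\ref{prop:luengo} verbatim, keeping track of the extra factor of $k$ that enters because $\hat S$ now meets the exceptional $\PP^2$ along $\cC_d$ with a nontrivial multiplicity. First I would fix local coordinates at a point $P\in\sing\cC_d$, say $P=[0:0:1]$, and use the usual affine chart $(x,y,z)\mapsto(xz,yz,z)$ of the blow-up $\Pi$, so that $E=\{z=0\}$ and $\hat S=\{f_d(x,y,1)+z^{k}(f_{d+k}(x,y,1)+\dots)=0\}$, whence $\cC_d=E\cap\hat S$ is cut out by $z=f_d(x,y,1)=0$. Away from $\sing\cC_d$ the surface $\hat S$ is smooth and meets $E$ transversally, exactly as before; the new feature is only the local behaviour at the finitely many points of $\sing\cC_d$, where $\hat S$ has the normal (non-smooth) singularity $z^{k}=f(x,y)$ with $f$ a local equation of $\cC_d$.

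The local intersection numbers are the second assertion, and I would argue as follows. For $i\neq j$ the curves $\cC_{d,i},\cC_{d,j}$ sit inside $E\cong\PP^2$ as well as inside $\hat S$, and the intersection $(\cC_{d,i}\cdot\cC_{d,j})_{\hat S,P}$ differs from $(\cC_{d,i}\cdot\cC_{d,j})_{\PP^2,P}$ precisely by the local multiplicity with which the divisor $\hat S$ contains $E$, i.e.\ by the ramification index $k$ of the $k$-fold covering $z^{k}=f(x,y)$ along $\{f=0\}$; in the intersection theory of normal surfaces (see~\cite{mum:61,Fulton-Intersection} and~\cite{AMO-Intersection}) this produces the stated factor $\tfrac1k$. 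Concretely, on the normal surface $\hat S$ one has $z=0$ defining the $\Q$-divisor $\tfrac1k\,\mathrm{div}(f)$ near $P$, so pairing against a branch of $\cC_{d,i}$ scales the classical $\PP^2$-intersection by $\tfrac1k$.

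For the self-intersection I would mimic the global computation in Proposition~\ref{prop:luengo}. The Euler class of $E\cong\PP^2$ inside $\widehat{\CC}^3$ is $-L$ with $L$ a line, so on $\hat S$ one gets $(\cC_d\cdot\cC_{d,i})_{\hat S}=\tfrac1k(e(E)\cdot\cC_{d,i})_{\PP^2}=-\tfrac{\dd_i}{k}$, again the division by $k$ coming from $E\cap\hat S=\cC_d$ being counted with multiplicity $k$ in the restriction to $\hat S$. On the other hand, decomposing $\cC_d=\sum\cC_{d,j}$ and using the local computation just made,
\[
(\cC_d\cdot\cC_{d,i})_{\hat S}=(\cC_{d,i}\cdot\cC_{d,i})_{\hat S}+\sum_{j\neq i}(\cC_{d,j}\cdot\cC_{d,i})_{\hat S}
=(\cC_{d,i}\cdot\cC_{d,i})_{\hat S}+\frac{\dd_i(d-\dd_i)}{k},
\]
so that $(\cC_{d,i}\cdot\cC_{d,i})_{\hat S}=-\tfrac{\dd_i}{k}-\tfrac{\dd_i(d-\dd_i)}{k}=-\tfrac{\dd_i(d-\dd_i+k)}{k}$, as claimed.

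The main obstacle I anticipate is making the factor $\tfrac1k$ rigorous: one must be careful that $\hat S$ is singular at the points of $\sing\cC_d$, so the intersection pairing is the $\Q$-valued one on a normal surface, and that the identity $E\cap\hat S=\cC_d$ holds with the correct scheme structure (the restriction of the Cartier divisor $E=\{z=0\}$ to $\hat S$ equals $k$ times the reduced curve $\cC_d$ as a Weil $\Q$-divisor, because of the $z^{k}$ appearing in the equation of $\hat S$). Once this bookkeeping is set up — most cleanly via the local model $z^{k}=f(x,y)$ and the tools of~\cite{AMO-Intersection} — everything else is the same linear-algebra manipulation already used for superisolated singularities, and in fact the $k=1$ case recovers Proposition~\ref{prop:luengo} exactly.
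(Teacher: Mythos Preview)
Your argument contains a genuine error in the self-intersection step, masked by an arithmetic slip at the end. You assert that
\[
(\cC_d\cdot\cC_{d,i})_{\hat S}=\tfrac1k(e(E)\cdot\cC_{d,i})_{\PP^2}=-\tfrac{\dd_i}{k},
\]
justifying the $\tfrac1k$ by saying ``$E\cap\hat S=\cC_d$ is counted with multiplicity $k$ in the restriction to $\hat S$''. This is false: at a \emph{smooth} point of $\cC_d$ the equation of $\hat S$ has a nonzero linear term in $(x,y)$, so $\hat S$ and $E$ meet transversally there, and hence the Cartier divisor $E|_{\hat S}$ is the \emph{reduced} curve $\cC_d$, not $k\cC_d$. (The multiplicity-$k$ phenomenon happens only in the \emph{normal} direction at the isolated singular points of $\hat S$; it does not affect the generic identification of $E|_{\hat S}$ with $\cC_d$.) Consequently the correct value is
\[
(\cC_d\cdot\cC_{d,i})_{\hat S}=(e(E)\cdot\cC_{d,i})_{\PP^2}=-\dd_i,
\]
with no $\tfrac1k$, and then
\[
(\cC_{d,i}\cdot\cC_{d,i})_{\hat S}=-\dd_i-\frac{\dd_i(d-\dd_i)}{k}=-\frac{\dd_i(d-\dd_i+k)}{k}.
\]
Your displayed line $-\tfrac{\dd_i}{k}-\tfrac{\dd_i(d-\dd_i)}{k}=-\tfrac{\dd_i(d-\dd_i+k)}{k}$ is simply wrong (the left side equals $-\tfrac{\dd_i(d-\dd_i+1)}{k}$), so the two mistakes do not in fact cancel.

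For the local formula $(\cC_{d,i}\cdot\cC_{d,j})_{\hat S,P}=\tfrac1k(\cC_{d,i}\cdot\cC_{d,j})_{\PP^2,P}$ your heuristic is in the right direction but vague. The paper makes this precise by observing that the projection $\rho(x,y,z)=(x,y)$ restricts to a degree-$k$ proper map $(\hat S,\cC_d)\to(E,\cC_d)$ totally ramified along $\cC_d$, so that $\rho^*\rho_*\cC_{d,i}=k\,\cC_{d,i}$; then
\[
(\cC_{d,i}\cdot\cC_{d,j})_{\hat S,P}=\frac{1}{k^2}(\rho^*\rho_*\cC_{d,i}\cdot\rho^*\rho_*\cC_{d,j})_{\hat S,P}
=\frac{1}{k}(\cC_{d,i}\cdot\cC_{d,j})_{\PP^2,P}
\]
by the projection formula. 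This push--pull is what rigorously produces the $\tfrac1k$, and it is worth internalising since the same device reappears in the weighted case.
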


\begin{proof}
We follow the guidelines of the proof of Proposition~\ref{prop:luengo}.
Note that it is not true any more that in the neighborhood of $P\in\sing\cC_d$ the germs $(E,\cC_d)$ and $(\hat{S},\cC_d)$ 
are isomorphic. However, the projection $\rho(x,y,z):=(x,y)$ restricts to a $k:1$ proper map $(\hat{S},\cC_d)\to(E,\cC_d)$.
Since $\pi^*(\pi_*\cC_{d,i})=k\cC_{d,i}$ we have that for $i\neq j$
\[
(\cC_{d,i}\cdot\cC_{d,j})_{\hat{S},P}=\frac{1}{k^2}(\pi^*\pi_*\cC_{d,i}\cdot\pi^*\pi_*\cC_{d,j})_{\hat{S},P}=
\frac{1}{k}(\cC_{d,i}\cdot\cC_{d,j})_{\PP^2,P}.
\]
For the self-intersections we apply the same ideas:
\begin{align*}
(\cC_d\cdot\cC_{d,i})_{\hat{S}}&=(e(E)\cdot\cC_{d,i})_{\PP^2}=-\dd_i \\
(\cC_d\cdot\cC_{d,i})_{\hat{S}}&=(\cC_{d,i}\cdot\cC_{d,i})_{\hat{S}}+\sum_{j\neq i}(\cC_{d,j}\cdot\cC_{d,i})_{\hat{S}}=
(\cC_{d,i}\cdot\cC_{d,i})_{\hat{S}}+\frac{\dd_i(d-\dd_i)}{k},
\end{align*}
and the result follows.
\end{proof} 

A similar proof to the one of Corollary~\ref{cor:sis} provides the following result.

\begin{cor}\label{cor:ly}
If $S$ is a $k$-L{\^e}-Yomdin as above, then 
$$\det S=\dd_1\cdot\ldots\cdot \dd_\s\cdot \left(\frac{d+k}{k}\right)^{\s-1}\prod_{P\in \sing\cC_d}\det S_{P,k},$$ 
where 
\[
S_{P,k}=\{z^k=f_P(x,y)\mid P\in\sing\cC_d\},
\]
and $f_P(x,y)=0$ is a local equation of $\cC_d$ at $P$.

In particular, if $\cC_d$ is smooth, then $\det S=d$.
\end{cor}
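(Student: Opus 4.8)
The plan is to combine Lemma~\ref{lem:detS} with the intersection data of the partial resolution $\pi\colon\hat S\to S$ described in Proposition~\ref{prop:luengo-ly}, mimicking the linear-algebra step of Corollary~\ref{cor:sis}. The partial resolution $\pi$ has exceptional divisor $\cC_d=\bigcup_{i=1}^{\s}\cC_{d,i}$, so Lemma~\ref{lem:detS} gives $\det S=\det(-A)\prod_{P\in\sing\cC_d}\det(\hat S,P)$, where $A$ is the intersection matrix of $\cC_d$ inside $\hat S$ and the local factors $\det(\hat S,P)$ are exactly $\det S_{P,k}$, since the local equation of $\hat S$ at such a $P$ is $z^k-f_P(x,y)=0$. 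So the whole content is to evaluate $\det(-A)$.

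First I would write down $-A$ explicitly from Proposition~\ref{prop:luengo-ly}: the diagonal entries are $\tfrac{\dd_i(d-\dd_i+k)}{k}$ and the off-diagonal entry $(i,j)$ is $-\tfrac{1}{k}(\cC_{d,i}\cdot\cC_{d,j})_{\PP^2}=-\tfrac{\dd_i\dd_j}{k}$ by B\'ezout. Factor $\tfrac1k$ out of every entry; since $A$ is $\s\times\s$ this contributes $k^{-\s}$, and we are left with the determinant of the integer matrix $M$ with $M_{ii}=\dd_i(d-\dd_i+k)$ and $M_{ij}=-\dd_i\dd_j$. Now I would run the same elimination as in Corollary~\ref{cor:sis}: replacing the first row by the sum of all rows turns it into $(\dd_1 k,\dots,\dd_\s k)$ because $\sum_j\dd_j=d$ forces the $i$-th entry to become $\dd_i(d-\dd_i+k)-\dd_i\sum_{j\neq i}\dd_j=\dd_i k$. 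Then adding $\dd_i$ times this new first row to the $i$-th row for $i\ge2$ kills all off-diagonal entries in those rows and turns the $i$-th diagonal entry into $\dd_i(d-\dd_i+k)-\dd_i^2 k\cdot\tfrac{1}{\dd_i k}\cdot(-\dd_i k)$ — more simply, into $\dd_i(d+k)$ after the cancellation — so $\det M=k\dd_1\cdot\dd_2(d+k)\cdots\dd_\s(d+k)=k(d+k)^{\s-1}\prod_i\dd_i$. Dividing by $k^{\s}$ gives $\det(-A)=\dd_1\cdots\dd_\s\bigl(\tfrac{d+k}{k}\bigr)^{\s-1}$, and multiplying by $\prod_P\det S_{P,k}$ yields the claimed formula.

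For the final sentence, if $\cC_d$ is smooth then $\s=1$, there are no singular points $P$, and $\dd_1=d$, so the formula collapses to $\det S=d$.

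The main point needing care is the bookkeeping in the row operations when entries carry the factor $\tfrac1k$ and are not integers; the cleanest route is exactly the one above, namely to clear denominators first so that the elimination is over $\ZZ$ and identical to the superisolated case, then reinsert the global $k^{-\s}$ at the end. A secondary subtlety is justifying that the local determinant factor in Lemma~\ref{lem:detS} at $P\in\sing\cC_d$ really is $\det S_{P,k}$: this is immediate because $\hat S$ near such a $P$ is analytically the hypersurface $z^k=f_P(x,y)$, whose link determinant is by definition $\det S_{P,k}$, and at points of $\cC_d$ that are smooth points of $\hat S$ the local determinant is $1$ and contributes nothing.
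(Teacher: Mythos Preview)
Your approach is exactly the paper's: the paper simply says ``A similar proof to the one of Corollary~\ref{cor:sis} provides the following result,'' meaning Lemma~\ref{lem:detS} combined with Proposition~\ref{prop:luengo-ly} and the same row-reduction. One small slip to fix: after replacing the first row of $M$ by the sum of all rows you obtain $(k\dd_1,\dots,k\dd_\s)$, so the multiplier needed to clear the off-diagonal entries of row~$i$ is $\dd_i/k$, not $\dd_i$ (equivalently, first factor the extra $k$ out of the new first row and \emph{then} add $\dd_i$ times it); with this correction the $i$-th diagonal entry becomes $\dd_i(d-\dd_i+k)+\dd_i^2=\dd_i(d+k)$, and your stated value $\det M=k(d+k)^{\,\s-1}\prod_i\dd_i$ follows cleanly.
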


\begin{ex}
Let $S_k$ be the singularity $\{z^k=x^2+y^2\}$, then $\det S_k=k$. 
Denote by $T_k$ the singularity $\{z^k=x^2+y^3\}$, then we have
\[
\det T_k=
\begin{cases}
1&\text{ if }\gcd(k,6)=1,6\\
3&\text{ if }\gcd(k,6)=2\\
4&\text{ if }\gcd(k,6)=3.\\
\end{cases}
\]
Note that $T_k$ admits a $\Q$-resolution with only one exceptional divisor.
This divisor has positive genus (equal to one) if and only if $\gcd(k,6)=6$.
\end{ex}

We did not find in the literature a general formula for this determinant.
From the above computations and the periodicity properties of the Alexander
invariants, the following statement may be true.

\begin{cjt}
Let $C:f(x,y)=0$ be a germ of a reduced plane curve singularity, and let 
$S_k:z^k=f(x,y)$ be a cyclic germ of surface. Let $N$ be the order of the semisimple
factor of the monodromy of $C$. Then $\det S_k$
is a quasi-polynomial in $k$ of period $N$.
\end{cjt}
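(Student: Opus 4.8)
The plan is to compute $\det S_k$ from a single fixed embedded resolution of $C$ by base change, and then to read off the quasi-polynomial behaviour from the resulting combinatorics.

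First I would fix a good embedded resolution $\pi\colon Y\to(\CC^2,0)$ of $C$, with exceptional divisors $E_v$ ($v\in V$), multiplicities $m_v:=\mult_{E_v}\pi^*C$, and relative canonical multiplicities $\nu_v$, and let $Z_k$ be the normalization of the fibre product of $\pi$ with the $k$-cyclic cover $S_k\to\CC^2$. The induced morphism $\rho_k\colon Z_k\to S_k$ is proper and birational, and an isomorphism over $S_k\setminus\{0\}$; by the local analysis of cyclic covers already used in Propositions~\ref{prop:singpz} and~\ref{prop:singpx}, $Z_k$ has only abelian quotient singularities, all lying over the points $E_v\cap E_w$ and $E_v\cap\widetilde C$, and the divisor $D_k:=\rho_k^{-1}(0)$ has, over each $E_v$, a number of components equal to the greatest common divisor of $k$, of $m_v$, of the $m_w$ for $w$ adjacent to $v$ in the dual graph $\Gamma$, and of $1$ whenever $E_v$ meets $\widetilde C$. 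Hence Lemma~\ref{lem:detS} applies and gives
\begin{equation*}
\det S_k=\det(-A_k)\cdot\prod_{Q\in\sing Z_k}\det(Z_k,Q),
\end{equation*}
where $A_k$ is the intersection matrix of the components of $D_k$; contracting the chains of $D_k$ lying over the vertices of valency $2$ of $\Gamma$, one may moreover arrange that $D_k$ has only one block of divisors for each rupture vertex and for each vertex meeting $\widetilde C$, so that the number of components of $D_k$ is bounded independently of $k$.

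Second I would analyse how each ingredient depends on $k$. Put $M:=\lcm\{m_v\mid v\text{ is a rupture vertex or }E_v\text{ meets }\widetilde C\}$; these are exactly the vertices for which $E_v$, minus its intersections with the rest of the total transform of $C$, has nonzero Euler characteristic, so by A'Campo's formula for the monodromy zeta function $M$ coincides with the order $N$ of the semisimple part of the monodromy of $C$. The number of components of $D_k$ over $E_v$, the way they are glued to the Hirzebruch--Jung strings produced over $E_v\cap E_w$ and $E_v\cap\widetilde C$, and the analytic types of the quotient singularities $Q$ all depend on $k$ only through $k\bmod M$. Therefore, on each residue class $k\equiv r\pmod M$ the plumbing graph underlying $A_k$ has a fixed shape and a fixed (bounded) number of vertices; its self-intersection numbers are affine-linear functions of $k$ (a term linear in $k$ coming from the covering degrees of the components over the $E_v$, plus sawtooth corrections which are periodic of period dividing $M$), and each $\det(Z_k,Q)$ is an affine-linear function of $k$ on that class as well. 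Consequently $\det(-A_k)$ is the determinant of a matrix of fixed size with affine-linear entries, hence a polynomial in $k$ on each class modulo $M$, and multiplying by the finitely many affine-linear factors $\det(Z_k,Q)$ preserves this. Thus $\det S_k$ is a polynomial in $k$ on each residue class modulo $M=N$, i.e.\ a quasi-polynomial in $k$ of period~$N$; tracking which quotient points contribute a factor growing linearly in $k$ should moreover show that its degree is the number of local branches of $C$ minus one.

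The delicate step is the second one: making precise and proving uniformly that, for $k$ in a fixed residue class modulo $N$, the $\Q$-resolution graph of $S_k$ has a fixed shape and that its self-intersections and the quotient-singularity determinants are affine-linear in $k$. Concretely one must describe, over each rupture divisor $E_v$, the components of $D_k$ and the Hirzebruch--Jung strings arising over the points $E_v\cap E_w$ and $E_v\cap\widetilde C$ in terms of continued-fraction expansions of $k$-dependent quantities such as $k\,m_w/m_v$, and then check that the resulting self-intersections and quotient determinants decompose as a linear function of $k$ plus a function periodic of period dividing $N$; for this I would combine the intersection formulae of~\cite[Theorem 4.3]{AMO-Intersection} with the standard sawtooth identities for Dedekind sums. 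As an independent check, and perhaps as an alternative route, one may compare with the classical Fox formula $\det S_k=\prod_{j=1}^{k-1}|\Delta_C(\zeta_k^j)|$, valid whenever no $\zeta_k^j$ is a root of the Alexander polynomial $\Delta_C$ of $C$, together with its refinement describing the torsion of $H_1$ of the cyclic branched cover when such roots occur: since the roots of $\Delta_C$ are $N$-th roots of unity, grouping the $\zeta_k^j$ by their order makes the period-$N$ dependence on $k$ transparent.
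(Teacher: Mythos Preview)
The statement you are addressing is labelled a \emph{Conjecture} in the paper, not a theorem; the authors write that ``from the above computations and the periodicity properties of the Alexander invariants, the following statement may be true'' and offer no argument. So there is no proof in the paper to compare your proposal against: you are sketching an attack on an open question.

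Your resolution-and-base-change strategy is a natural one and is consistent with how the paper computes $\det S$ via Lemma~\ref{lem:detS}. Two points deserve more care. First, your identification $M=N$: A'Campo's formula runs over \emph{all} vertices with $\chi(E_v^\circ)\neq 0$, which includes the dead-end vertices of the exceptional tree (valency~$1$, not meeting $\widetilde C$), not only the rupture vertices and those meeting $\widetilde C$. You need the (true, but not entirely trivial) fact that for plane curve resolutions each dead-end multiplicity divides the multiplicity of the adjacent rupture vertex, so these extra vertices do not enlarge the $\lcm$. Second, the heart of the matter is exactly what you flag as ``delicate'': the assertion that, on a fixed residue class modulo $N$, the self-intersections of the components of $D_k$ and the orders of the quotient singularities at the crossing points are \emph{affine} functions of $k$. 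This is plausible from the explicit description of the normalized cyclic cover of $z^k=u^{m_v}v^{m_w}$, but it is not something one can simply assert; one has to write down the Hirzebruch--Jung data (which involve quantities like $k/\gcd(k,m_v)$ and inverses modulo $m_v$) and check that the determinant contributions combine to a polynomial in $k$ on each class. Until that is done, the argument is a programme rather than a proof.

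Your closing remark about the Fox-type formula $\det S_k=\prod_{j=1}^{k-1}|\Delta_C(\zeta_k^j)|$ is in fact closer to what the paper hints at with ``periodicity properties of the Alexander invariants'', and for irreducible $C$ it gives the result almost immediately once one knows the roots of $\Delta_C$ are $N$-th roots of unity. The genuine difficulty, again, is the reducible case: the multivariable Alexander polynomial and the contribution of the free part of $H_1$ when some $\zeta_k^j$ is a root require a careful statement, and this is precisely where the conjecture is nontrivial.
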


\begin{prop}\label{prop:ly_rhs}
A $k$-L{\^e}-Yomdin singularity with tangent cone~$\cC_d$ has as link a rational homology
sphere if and only if $\cC_d$ is a union of rational cuspidal curves with only one intersection point
and the links of the $k$-cyclic singularities associated with the singular points of $\cC_d$
have also a rational homology sphere as a link.
\end{prop}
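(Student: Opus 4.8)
The plan is to produce an explicit normal crossing resolution of $(S,0)$ starting from the partial resolution $\pi\colon\hat S\to S$ of Proposition~\ref{prop:luengo-ly}: its exceptional divisor is $\cC_d\subset\hat S$, the normal surface $\hat S$ is smooth away from $\sing\cC_d$, and near a point $P\in\sing\cC_d$ the germ $(\hat S,P)$ is analytically the $k$-cyclic singularity $S_{P,k}=\{z^k=f_P(x,y)\}$, with $f_P=0$ a local equation of $\cC_d$ at $P$ (this is exactly the local picture already appearing in Corollary~\ref{cor:ly}). Composing $\pi$ with an embedded resolution of $(\hat S,\cC_d)$ at each $P\in\sing\cC_d$ gives a resolution $\sigma\colon(Y,E)\to(S,0)$ whose dual graph $\Gamma$ has a transparent shape: one vertex $v_i$ for each irreducible component $\cC_{d,i}$, whose strict transform in $Y$ is the normalization of $\cC_{d,i}$ and therefore carries genus $g(\cC_{d,i})$; one connected exceptional subgraph $T_P$ over each $P\in\sing\cC_d$, which is a resolution graph of the germ $S_{P,k}$; and $v_i$ joined to $T_P$ by exactly as many edges as the number of local branches of $\cC_{d,i}$ at $P$. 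Since any two components of a plane curve meet (B\'ezout), for $P\in\cC_{d,i}\cap\cC_{d,j}$ with $i\ne j$ the piece $T_P$ glues $v_i$ to $v_j$.

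Next, Corollary~\ref{cor:QHS} reduces the statement to two conditions on this graph: every vertex of $\Gamma$ has genus $0$, and $\Gamma$ is a tree. I would translate them in turn. The genus condition on the $v_i$ says exactly that each $\cC_{d,i}$ is rational; the genus condition on the vertices of a fixed $T_P$, taken together with $T_P$ being cycle-free, is equivalent --- by Corollary~\ref{cor:QHS} applied to $S_{P,k}$, and because ``the exceptional graph is a tree of rational curves'' does not depend on the chosen resolution (extra blow-ups of smooth points or of crossing points of exceptional curves, and contractions of $(-1)$-curves, all preserve this property) --- to the link of $S_{P,k}$ being a rational homology sphere. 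Granting that each $T_P$ is already a tree of rational curves, what remains is to see when the whole of $\Gamma$ is a tree. A non-unibranch singular point of some $\cC_{d,i}$ produces a multiple edge between $v_i$ and the corresponding $T_P$, hence a cycle, so each $\cC_{d,i}$ must be unibranch at all of its singularities, i.e.\ cuspidal; conversely a cusp of $\cC_{d,i}$ contributes a single edge from $v_i$ to one $T_P$ and creates no cycle. Moreover, if $s\ge 2$ and two distinct points $P\ne P'$ each lie on at least two components, then chaining through B\'ezout one gets two vertex-disjoint paths between some pair $v_i,v_j$, hence a cycle; so all intersection points must coincide, forcing every component to pass through a single common point $P_0$ and the pairwise intersections to occur only there. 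Under these hypotheses $\Gamma$ is manifestly a tree: the tree $T_{P_0}$ (or just the single vertex $v_1$ when $s=1$) with the $v_i$ and the remaining cuspidal trees $T_P$ hanging off it along single edges. Assembling these equivalences gives the statement.

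The chart computations behind the partial resolution and the self-intersection numbers are already recorded in Proposition~\ref{prop:luengo-ly}, and the B\'ezout bookkeeping is routine; the step I expect to need the most care is the combinatorial description of $\Gamma$ above --- that the strict transform of $\cC_{d,i}$ in $Y$ is its normalization and meets $T_P$ in precisely the number of its local branches at $P$, and that ``being a tree of rational curves'' is an invariant of the abstract link of $S_{P,k}$ rather than of a particular resolution. With that in place the whole argument is a direct adaptation, over the possibly singular surface $\hat S$, of the one used for superisolated singularities in~\cite{LMN-LinksSuperisolated}.
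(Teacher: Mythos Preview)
Your proposal is correct and is precisely the argument the paper has in mind: the paper's own ``proof'' is the single sentence ``The proof of this proposition is a direct consequence of the previous result,'' referring to the partial resolution of Proposition~\ref{prop:luengo-ly} and the local description of $(\hat S,P)\cong S_{P,k}$; you have simply spelled out the resolution graph and the application of Corollary~\ref{cor:QHS} that the paper leaves implicit. Your combinatorial reduction (rationality of the $\cC_{d,i}$, cuspidality from unibranch, uniqueness of the intersection point from the tree condition via B\'ezout) is exactly the adaptation of the superisolated case in~\cite{LMN-LinksSuperisolated}, as you note.
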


The proof of this proposition is a direct consequence of the previous result. 
We have not proven that L{\^e}-Yomdin singularities do not provide integral homology sphere links, mainly since we do
not have a closed formula for the determinant of a cyclic singularity. Our experimentation
leads to this conjecture.

\begin{cjt}
No $k$-L{\^e}-Yomdin singularity $k>1$ has an integral homology sphere link.
\end{cjt}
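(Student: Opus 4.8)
The plan is to reduce the statement to the determinant formula of Corollary~\ref{cor:ly} and then win by a short arithmetic argument. Suppose, for contradiction, that $S$ is a $k$-L\^e-Yomdin singularity with $k>1$ whose link is an integral homology sphere. Then the link is in particular a rational homology sphere, so by Corollary~\ref{cor:QHS} one has $\det S=1$, and by Proposition~\ref{prop:ly_rhs} the tangent cone decomposes as $\cC_d=\cC_{d,1}\cup\dots\cup\cC_{d,\s}$, a union of rational cuspidal curves meeting at a single point, with each $S_{P,k}$, $P\in\sing\cC_d$, a rational homology sphere singularity. In particular $\cC_d$ is reduced (which is anyway forced by the L\^e-Yomdin condition $\sing\cC_d\cap\cC_{d+k}=\emptyset$ and B\'ezout), so the hypotheses of Corollary~\ref{cor:ly} hold.

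Next I would simply read off the contradiction. Writing $\dd_i=\deg\cC_{d,i}$ and $d=\dd_1+\dots+\dd_\s$, Corollary~\ref{cor:ly} gives
\[
\det S=\dd_1\cdots\dd_\s\left(\frac{d+k}{k}\right)^{\s-1}\prod_{P\in\sing\cC_d}\det S_{P,k}.
\]
Each factor on the right is at least $1$: the $\dd_i$ are positive integers, and $\det S_{P,k}\ge 1$ because the determinant of a normal surface singularity is the order of the torsion subgroup of $H_1$ of its link, a positive integer. Hence $\det S\ge\bigl(1+\tfrac{d}{k}\bigr)^{\s-1}$. If $\s\ge 2$, then $d\ge 1$ yields $\det S\ge 1+\tfrac dk>1$, and since $\det S$ is an integer, $\det S\ge 2$, contradicting $\det S=1$. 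If $\s=1$, then $\det S=d\cdot\prod_P\det S_{P,k}\ge d$, so $\det S=1$ forces $d=1$; but then the tangent cone is a line, the lowest-degree part of the equation of $S$ is a nonzero linear form, and $S$ is smooth at the origin, hence not a singularity. In all cases we obtain a contradiction, and the statement follows (indeed for every $k\ge 1$, recovering the superisolated case as well).

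The one place where care is genuinely needed is the invocation of Corollary~\ref{cor:ly}: one must be sure that for a finite-$k$ L\^e-Yomdin singularity with rational homology sphere link the single blow-up $\pi\colon\hat S\to S$ yields a \emph{normal} surface $\hat S$ whose singular locus is finite, contained in the fibre over $\sing\cC_d$, and locally analytically given by the germs $S_{P,k}=\{z^k=f_P(x,y)\}$, so that Lemma~\ref{lem:detS} together with Proposition~\ref{prop:luengo-ly} really combine into the displayed product. This is precisely the content that underlies the proof of Corollary~\ref{cor:ly}, so it should be available. The noteworthy feature of this approach is that, contrary to what one might expect, no closed formula for the determinant $\det S_{P,k}$ of the cyclic germs is needed: the bare inequality $\det S_{P,k}\ge 1$ is enough, because the factor $\bigl(\tfrac{d+k}{k}\bigr)^{\s-1}$ already exceeds $1$ when $\s\ge 2$, and the case $\s=1$ is handled by $\det S\ge d$.
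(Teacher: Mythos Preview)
The statement you are attempting to prove is listed in the paper as a \emph{conjecture}, not a theorem: the authors explicitly write that they ``have not proven that L\^{e}-Yomdin singularities do not provide integral homology sphere links, mainly since we do not have a closed formula for the determinant of a cyclic singularity.'' So there is no proof in the paper to compare against.

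Your argument, however, appears to be correct and to settle the conjecture. The key observation---which the authors seem to have overlooked---is exactly the one you flag at the end: one does not need a closed formula for $\det S_{P,k}$, only the trivial bound $\det S_{P,k}\geq 1$, which holds because it is the determinant of the (integer, negative-definite) intersection matrix of any resolution of a normal surface germ. With that bound in hand, Corollary~\ref{cor:ly} gives $\det S\geq\bigl(\frac{d+k}{k}\bigr)^{\s-1}>1$ whenever $\s\geq 2$, forcing the integer $\det S$ to be at least~$2$; and for $\s=1$ it gives $\det S\geq d$, so $\det S=1$ forces $d=1$ and hence a smooth germ. All the auxiliary points you raise (reducedness of $\cC_d$ via the L\^{e}-Yomdin condition and B\'ezout, normality of $\hat S$, the applicability of Lemma~\ref{lem:detS}) are already implicit in the derivation of Corollary~\ref{cor:ly} and pose no obstacle. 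In short, you have supplied a proof where the paper offers only a conjecture, and by a route more elementary than the authors anticipated needing.
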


\subsection{Weighted L{\^e}-Yomdin singularities}

We are going to generalize these families of singularities using weighted homogeneous curves.
We use the notation $\omega$, $\eta$, etc.~introduced in~\S\ref{sec:WPR}.
The following notion of weighted L{\^e}-Yomdin singularity was introduced in~\cite{ABLM-milnor-number}.

\begin{defn}
A hypersurface $(S,0):=\{F=0\}$ is an \emph{$(\omega,k)$-weighted L{\^e}-Yomdin singularity} if the following
holds. Let $F:=f_d+f_{d+k}+\dots$ be the decomposition in $\omega$-weighted homogeneous forms, then 
$\Jac(f_d)\cap \supp(f_{d+k})=\emptyset$.
\end{defn}

In order to relate geometrically this definition with the definition of superisolated and
L{\^e}-Yomdin singularities, let us consider the weighted blow-up $\Pi_\omega:\widehat{\CC}_\omega^3\to\CC^3$.
In \S\ref{sec:wblowups} we have described a stratification of the exceptional divisor $E_\omega\cong\PP^2_\omega\cong\PP^2_\eta$
according to the singularities of  $\widehat{\CC}_\omega^3$, see Proposition~\ref{prop:singular_locus}. 

One needs to study the two curves $\cC_d,\cC_{d+k}\subset E_\omega$.
In general, note that $f_d(x,y,z)=x^{\varepsilon_x}y^{\varepsilon_y}z^{\varepsilon_z} g(x^{d_1},y^{d_2},z^{d_3})$, 
where $\varepsilon_x,\varepsilon_y,\varepsilon_z\in\{0,1\}$ and $g$ is $\eta$-weighted homogeneous
of degree~$\frac{d-e_1\varepsilon_x-e_2\varepsilon_y-e_3\varepsilon_z}{d_1 d_2 d_3}$. 
If we see this curve in $\PP^2_\eta$ its equation is $x^{\varepsilon_x}y^{\varepsilon_y}z^{\varepsilon_z} g(x,y,z)=0$.

\begin{prop}
\label{prop:wLY}
Let $S=\{F=0\}$ be an $(\omega,k)$-weighted L{\^e}-Yomdin singularity with $\omega$-quasi-tangent cone $\cC_d=\{f_d=0\}$. 
Let $\Pi_\omega$ be the $\omega$-blow-up, $E_\omega\cong\PP^2_\omega\cong\PP^2_\eta$ is the exceptional divisor and $\hat{S}$ 
is the strict transform (and a partial resolution) of~$S$. Recall the stratification of 
$E_\omega=\mathcal{P}\cup\mathcal{L}\cup \mathcal{T}$ as given in Notation{\rm~\ref{ntc:strata}}. 
The structure of $\hat{S}$ along $P\in\cC_d=E_\omega\cap\hat{S}$ is as follows:

\begin{enumerate}[label=\rm(\arabic{enumi})]
\item\label{caso1} $P\in\mathcal{T}$.

\begin{enumerate}[label=\rm(\alph{enumii}), ref=\rm(\arabic{enumi}\alph{enumii})]

\item\label{caso1a} If $P\notin \sing\cC_d$ then $\hat{S}$ is smooth at $P$ and 
$E_\omega\pitchfork_P\hat{S}$.

\item\label{caso1b} If $P\in \sing\cC_d$  then $P\notin\cC_{d+k}$.
There are local coordinates $U,V,W$ centered at~$P$ such that $E_\omega=\{W=0\}$, $\cC_d=\{W=g(U,V)=0\}$
and $\hat{S}=\{W^k=g(U,V)\}$; in particular $\hat{S}$ is smooth at~$P$ if and only if $k=1$ (but it is not transversal to $E_\omega$).

\end{enumerate}

\item\label{caso2} $P\in\mathcal{L}_y$ (a similar statement holds for $\mathcal{L}_x,\mathcal{L}_z$).

\begin{enumerate}[label=\rm(\alph{enumii}), ref=\rm(\arabic{enumi}\alph{enumii})]
\item\label{caso2a} 
If $\cC_d$ is transversal to~$Y$ at~$P$  then $(\hat{S},P)\cong\frac{1}{d_2}(e_2,-1)$. 
In the quotient ambient space  $(\hat{\CC}_\omega^3,P)$ the situation is similar to {\rm~\ref{caso1a}}. 

\item\label{caso2b} 
If $(\cC_d,P)=(Y,P)$ then $(\hat{S},P)$ is smooth. 
In the quotient ambient space $(\widehat{\CC}_\omega^3,P)$ the situation is similar to {\rm~\ref{caso1a}}. 

\item\label{caso2c} 
If $\cC_d\not\pitchfork_P Y$, i.e. the order of $f_d(x+t,y,1)$ is $>1$ ($P=[t:0:1]$), then  $P\notin\cC_{d+k}$.
The germ $(\hat{S},P)$ is isomorphic to $z^k=f_d(x+t,y,1)$ in
the 3-fold quotient singularity $\frac{1}{d_2}(0,e_2,-1)$, where $z=0$ is the equation of~$E_\omega$.
\end{enumerate}
 
\item\label{caso3} $P=P_z$ (a similar statement holds for $P_x,P_y$).

\begin{enumerate}[label=\rm(\alph{enumii}), ref=\rm(\arabic{enumi}\alph{enumii})]
\item\label{caso3a} If $\cC_d$ is extremely quasi-smooth at~$P$ (i.e. the order of $f_d(x,y,1)$ is~$1$) 
the situation is as in{\rm~\ref{caso1a}} replacing the ambient smooth space by
the 3-fold quotient singularity $\frac{1}{e_3}(e_1,e_2,-1)$.
Let $h_1(x,y)$ be the linear part of $f_d(x,y,1)$.

\begin{enumerate}[label=\rm(\roman{enumiii}),ref=\rm(\arabic{enumi}\alph{enumii}\roman{enumiii})]
 \item If $h_1(x,y)$ is proportional to~$x$, then $(\hat{S},P)\cong\frac{1}{e_3}(e_1,-1)$.
 \item If $h_1(x,y)$ is proportional to~$y$, then $(\hat{S},P)\cong\frac{1}{e_3}(e_2,-1)$.
 \item Otherwise, $e_1\equiv e_2\bmod{e_3}$ and the above cases coincide.
\end{enumerate}

\item\label{caso3b} 
If $\cC_d$ is not extremely quasi-smooth at~$P$ (i.e., the order of $f_d(x,y,1)$ is~$>1$), then  $P\notin\cC_{d+k}$
and $d+k\equiv 0\bmod{e_3}$. 
The germ $(\hat{S},P)$ is isomorphic to $z^k=f_d(x,y,1)$ in
the 3-fold quotient singularity $\frac{1}{e_3}(e_1,e_2,-1)$, 
where $z=0$ is the equation of~$E_\omega$.
\end{enumerate}
\end{enumerate}
\end{prop}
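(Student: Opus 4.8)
The plan is to work chart by chart in the weighted blow-up $\Pi_\omega$ and to compute the strict transform $\hat S$ explicitly. In the chart $\widehat{\Psi}_{\omega,3}$, where $(x,y,z)\mapsto((xz^{e_1},yz^{e_2},z^{e_3}),[x:y:1]_\omega)$, the identity $f_m(xz^{e_1},yz^{e_2},z^{e_3})=z^{m}f_m(x,y,1)$, valid for any $\omega$-homogeneous form $f_m$, gives $\Pi_\omega^{*}F=z^{d}\bigl(f_d(x,y,1)+z^{k}u\bigr)$ with $u:=f_{d+k}(x,y,1)+z\,f_{d+k+1}(x,y,1)+\cdots$. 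Hence, writing $G:=f_d(x,y,1)+z^{k}u$, one has $\hat S=\{G=0\}$, $E_\omega=\{z=0\}$ and $\cC_d=E_\omega\cap\hat S=\{z=f_d(x,y,1)=0\}$. Using $\zeta^{e_3}=1$ on the cyclic group of the chart one checks that $f_d(x,y,1)$ and $u$ are semi-invariant of weights $d$ and $d+k$, so all these hypersurfaces descend to the quotient charts; by symmetry it suffices to treat the points in $\widehat{\Psi}_{\omega,3}$, and the ambient quotient type at such a point $P$ is already supplied by Proposition~\ref{prop:singular_locus} together with Notation~\ref{ntc:strata}. The only real task is therefore to describe $(\hat S,P)$ inside that ambient germ.

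First I would handle the ``good'' points of $\cC_d$: $P\notin\sing\cC_d$ in \ref{caso1a}, $\cC_d$ transversal to the axis or equal to it in \ref{caso2a} and \ref{caso2b}, and $\cC_d$ extremely quasi-smooth at $P_z$ in \ref{caso3a}. In each of these some partial derivative of $G$ at a lift of $P$ is non-zero, so on the cyclic cover $\hat S$ is a smooth hypersurface meeting $E_\omega=\{z=0\}$ transversally; solving $G=0$ for the corresponding coordinate realises $\hat S$ upstairs as a smooth surface on which the isotropy group acts through the remaining two coordinates, and $(\hat S,P)$ is the quotient germ carried by the tangent direction of $\cC_d$ together with the $z$-direction. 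This produces the transversal picture $E_\omega\pitchfork_P\hat S$ of \ref{caso1a} and its orbifold analogues \ref{caso2a} and \ref{caso2b}; at $P_z$ the coordinate one solves for is the one spanning the linear part $h_1(x,y)$ of $f_d(x,y,1)$, which by the Remark of \S\ref{sec:quotient} lies in one of the two eigenlines, and this gives the sub-cases of \ref{caso3a} (they coincide precisely when $e_1\equiv e_2\bmod e_3$).

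The remaining cases \ref{caso1b}, \ref{caso2c} and \ref{caso3b} are where the weighted L\^{e}-Yomdin hypothesis enters. In each of them the stated degeneracy of $\cC_d$ forces $\partial_x f_d$ and $\partial_y f_d$ to vanish at (the lift of) $P$, whence the weighted Euler identity gives $P\in\Jac(f_d)$; then $\Jac(f_d)\cap\supp(f_{d+k})=\emptyset$ forces $P\notin\cC_{d+k}$, so $u$ is a unit near $P$. Since $P$ is moreover a fixed point of the isotropy group, a semi-invariant that does not vanish there must have weight $\equiv 0$; applied to $f_{d+k}(x,y,1)$ this yields $d+k\equiv 0$ modulo the order of the isotropy group (the congruence recorded in \ref{caso3b}), so $u$ is in fact invariant. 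I would then choose an invariant branch of $u^{1/k}$ and make the equivariant change of coordinates $W:=z\,u^{1/k}$, for which $\{W=0\}=E_\omega$, turning $G=0$ into $W^{k}=-f_d(x,y,1)$; this is exactly the local model $\{z^{k}=f_d(x,y,1)\}$ inside the three-fold quotient singularities of \ref{caso1b}, \ref{caso2c} and \ref{caso3b}. In \ref{caso1b} one then reads off that $\hat S$ is smooth if and only if $k=1$ and that, even then, its tangent plane equals that of $E_\omega$ (the linear part of $f_d$ vanishes because $P\in\sing\cC_d$), so $\hat S$ is never transversal to $E_\omega$.

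The step I expect to be the only delicate one is this equivariant choice of $u^{1/k}$: one must be sure that a $k$-th root of the unit $u$ can be taken compatibly with the cyclic action, and that is exactly where the congruence $d+k\equiv 0$ modulo the order of the isotropy group---itself a consequence of $P\notin\cC_{d+k}$ and of $P$ being a fixed point---is needed. Everything else reduces to the chart computation above, to the bookkeeping of quotient types through Proposition~\ref{prop:singular_locus}, and to the translation of ``$P\in\sing\cC_d$'' into ``$P\in\Jac(f_d)$'' through the weighted Euler identity.
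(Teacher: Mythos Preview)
Your proposal is correct and follows essentially the same approach as the paper's proof: both work in the chart $\widehat{\Psi}_{\omega,3}$, write $\hat S=\{f_d(x,y,1)+z^k u=0\}$, split according to whether the linear part of $f_d(x,y,1)$ vanishes at $P$, and in the degenerate cases absorb the unit $u$ into $z$ via $z\mapsto z\,u^{1/k}$ (the paper writes this coordinate change explicitly, together with the explicit solving for $x$ or $y$ in the non-degenerate cases). Your identification of the equivariance of $u^{1/k}$---hinging on $d+k\equiv 0$ modulo the order of the isotropy group because $u$ does not vanish at the fixed point---is exactly the point the paper records as ``the action of $\mu_{e_3}$ on $(x_1,y_1,z_1)$ reads as in $(x,y,z)$'' and ``if $P\notin\cC_{d+k}$ then $d+k\equiv 0\bmod e_3$''.
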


\begin{proof}
The different parts of the statement will be particular cases of the following general situation.
Assume $P=[0:0:1]\in \cC_d=E_\omega\cap \hat S$ is a point of the strict transform $\hat S$ of 
$S$ on the exceptional divisor $E_\omega$. The total transform of $S$ is equal to 
$\hat{S}+dE_\omega$ and hence, its equation in the chart $\Psi_{\omega,3}$ is:
\[
z^d(f_d(x,y,1)+z^k\underbrace{(f_{d+k}(x,y,1)+\dots)}_{q(x,y)})=0.
\]
By hypothesis $f_d(0,0,1)=0$, let us denote by $\ell(x,y)$ the linear part of $f_d(x,y,1)$. 
The following conditions are immediate
\[
\begin{cases}
P\notin\cC_{d+k}\Longleftrightarrow q(x,y)\text{ is a unit},\\
P\in \cC_{d+k}\Longrightarrow \ell(x,y)\neq 0.
\end{cases}
\]
We will consider $(x_1,y_1,z_1)$ a change of coordinates where
\[
(x_1,y_1,z_1)=
\begin{cases}
(x,y,z q(x,y)^{\frac{1}{k}})&\text{ if } \ell(x,y)=0,\\
(\frac{1}{a}(f_d(x,y,1)+z^k q(x,y)),y,z)&\text{ if } \ell(x,y)=ax,\\
(x,\frac{1}{b}(f_d(x,y,1)+z^k q(x,y)-ax),z)&\text{ if } \ell(x,y)=ax+by, b\neq 0.\\
\end{cases}
\]
Note that the action of $\mu_{e_3}$ on $(x_1,y_1,z_1)$ reads as in $(x,y,z)$.
If $P\notin\cC_{d+k}$ then $d+k\equiv 0\bmod e_3$.

In these coordinates $E_\omega:z_1=0$ and $\cC_d:W=g(x_1,y_1)=0$, where 
\[
g(x_1,y_1)=
\begin{cases}
f_d(x_1,y_1,1)&\text{ if } \ell(x,y)=0,\\
\ell(x_1,y_1)&\text{ otherwise. }
\end{cases}
\]
The local equations for $d E_\omega+\hat{S}$ are
$z_1^d (z_1^k+g)=0$. If $\ell(x,y)\neq 0$, then both look like two surfaces in a 
quotient ambient space whose preimages in $\CC^3$ are smooth and transversal.

The case $P\in\mathcal{T}$ locally corresponds to $\omega=(1,1,1)$. Note that \ref{caso1a} implies 
$\ell(x_1,y_1)\neq 0$ whereas \ref{caso1b} implies $\ell(x_1,y_1)=0$. 
The case $P\in\mathcal{L}_y$ corresponds with the choice $\omega=(d_2,e_2,d_2)$ where \ref{caso2a} 
and \ref{caso2b} refers to $\ell\neq 0$ and \ref{caso2c} refers to $\ell=0$. 
Finally, $P=P_z$, corresponds to the choice $\omega=(e_1,e_2,e_3)$. In this case~\ref{caso3a} refers to
the different cases of $\ell\neq 0$ and \ref{caso3b} refers to $\ell=0$. 
\end{proof}

The divisor $\cC_d$ has an irreducible decomposition in $\s+\varepsilon_x+\varepsilon_y+\varepsilon_z$ components
$\varepsilon_x X+\varepsilon_y Y+\varepsilon_z Z+ \tilde{\cC}_{d}$, where $\tilde{\cC}_{d}=\sum_{i=1}^\s \cC_{d,i}$
and $\varepsilon_x,\varepsilon_y,\varepsilon_z\in\{0,1\}$. 
Recall that $d_1d_2d_3$ divides~$\deg\cC_{d,i}$ and we can write $\dd_i=d_1d_2d_3\hat{\dd}_{i}$.

\begin{def}
\label{def:ssmooth}
Consider the stratification of a weighted projective plane as above.
We call a curve in a weighted projective plane \emph{stratified smooth} if it is smooth, intersects the axes 
transversally and does not contain the vertices.
\end{def}

\begin{prop}\label{prop:luengo-wly}
Let $S$ be an $(\omega,k)$-weighted L{\^e}-Yomdin singularity with \emph{quasi-tangent cone} 
$\cC_d\subset\PP^2_\omega$ of degree~$d$. 
With the notation of Proposition{\rm~\ref{prop:luengo-ly}}:
\begin{enumerate}[label=\rm(\arabic{enumi})]
\item$(\cC_{d,i}\cdot\cC_{d,i})_{\hat{S}}=-\frac{\dd_i(d-\dd_i+k)}{k e_1 e_2 e_3}=-\frac{\hat{\dd}_{i}(d-\dd_i+k)}{k d_1d_2d_3\alpha_1 \alpha_2 \alpha_3}$.
\item\label{interx} If $\varepsilon_x=1$, then $(X\cdot X)_{\hat{S}}=-\frac{d_1^2 e_1(d-e_1+k)}{k e_1 e_2 e_3}=-\frac{d_1^2 (d-e_1+k)}{k e_2 e_3}=-\frac{d-e_1+k}{k d_2 d_3 \alpha_2 \alpha_3}$. 
Similar formulas hold for $Y$ and~$Z$.
\item\label{interij} If $i\neq j$, then $(\cC_{d,i}\cdot\cC_{d,j})_{\hat{S}}=\frac{\dd_i\dd_j}{k e_1 e_2 e_3}=\frac{\hat{\dd}_i\hat{\dd}_j}{k \alpha_1 \alpha_2 \alpha_3}$.
\item\label{interix} If $\varepsilon_x=1$ then $(\cC_{d,i}\cdot X)_{\hat{S}}=\frac{d_1\dd_i}{k  e_2 e_3}=\frac{\hat{\dd}_i}{k \alpha_2 \alpha_3}$.
Similar formulas hold for $Y$ and~$Z$.
\item\label{interxy} If $\varepsilon_x\varepsilon_y=1$
$(X\cdot Y)_{\hat{S},P_z}=\frac{d_1 d_2}{k  e_3}=\frac{1}{k  \alpha_3}$.
Similar formulas hold for the other pairs involving~$X$, $Y$, and~$Z$.
\end{enumerate}
\end{prop}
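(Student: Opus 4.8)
The plan is to imitate the proofs of Propositions~\ref{prop:luengo} and~\ref{prop:luengo-ly}, carrying out the computation on the (possibly singular) normal surface $\hat{S}$ with the $\Q$‑valued intersection theory of~\cite[Theorem~4.3]{AMO-Intersection}, and using Proposition~\ref{prop:wLY} as the source of all local information about $\hat{S}$ along the exceptional set $\cC_d=E_\omega\cap\hat{S}$.

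The backbone is a single global identity. Since $\cC_d=E_\omega\cap\hat{S}$ is $E_\omega|_{\hat{S}}$ as a $\Q$‑divisor on $\hat{S}$, and every irreducible component $C$ of $\cC_d$ lies in $E_\omega\cong\PP^2_\omega$ whose normal ($\Q$‑line) bundle in $\widehat{\CC}^3_\omega$ is $\mathcal{O}_{\PP^2_\omega}(-1)$ — equivalently, the Euler class of $E_\omega$ is $-H$ with $H=\mathcal{O}_{\PP^2_\omega}(1)$, the weighted analogue of ``$e(E)=-L$'' in Proposition~\ref{prop:luengo} — one gets $(\cC_d\cdot C)_{\hat{S}}=\deg_C\mathcal{O}_{\PP^2_\omega}(-1)=-(H\cdot C)_{\PP^2_\omega}$. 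The right-hand side is then read off from the divisor classes in $\operatorname{Pic}(\PP^2_\omega)\otimes\Q$: a component not supported on the coordinate axes has class $\dd_iH$, while the reduced axes $X,Y,Z$ have classes $d_1e_1H,\,d_2e_2H,\,d_3e_3H$ (the reduced curve $\{x=0\}$ being cut out by $x^{d_1}$, not by $x$; this is also what forces the factors $d_1,d_2,d_3$ in items~(2)--(5)).

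The off-diagonal terms come from the ramified projection. By Proposition~\ref{prop:wLY}, at each point $P$ of $\cC_d$ lying over $\sing\cC_d$, over a $1$‑stratum $\mathcal{L}_\bullet$, or over a vertex, $\hat{S}$ is locally $\{z^k=f_P(x,y)\}$ inside the relevant (quotient) $3$‑fold, with $\{z=0\}$ defining $E_\omega$; hence $\rho(x,y,z)=(x,y)$ restricts to a degree‑$k$ cover $(\hat{S},P)\to(E_\omega,P)$ totally ramified along $\cC_d$, so $\rho^{\ast}C=k\,C$ for every $C\subseteq\cC_d$, and the projection formula gives $(C\cdot C')_{\hat{S},P}=\tfrac1k(C\cdot C')_{E_\omega,P}$ for distinct components meeting at $P$ (the quotient orders cancelling identically on both sides). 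Summing over all intersection points and invoking weighted Bézout in $\PP^2_\omega\cong\PP^2_\eta$ yields (3)--(5), for instance $(\cC_{d,i}\cdot\cC_{d,j})_{\hat{S}}=\tfrac1k\dd_i\dd_j H^2=\tfrac{\dd_i\dd_j}{k\,e_1e_2e_3}$ and $(X\cdot Y)_{\hat{S}}=\tfrac1k(d_1e_1)(d_2e_2)H^2=\tfrac{d_1d_2}{k\,e_3}$. The self-intersections (1) and (2) then drop out by bilinearity: as a $\Q$‑divisor on $\hat{S}$ the exceptional set is $E_\omega\cap\hat{S}=\sum_i\cC_{d,i}+\tfrac{\varepsilon_x}{d_1}X+\tfrac{\varepsilon_y}{d_2}Y+\tfrac{\varepsilon_z}{d_3}Z$ (multiplicity $1$ along each $\cC_{d,i}$ by the transversality of Proposition~\ref{prop:wLY}\,\ref{caso1a}, multiplicity $1/d_\bullet$ along each axis), so one expands $(\cC_d\cdot\cC_{d,i})_{\hat{S}}$ and $(\cC_d\cdot X)_{\hat{S}}$, substitutes the value from the backbone identity together with (3)--(5), and solves for $(\cC_{d,i}^2)_{\hat{S}}$, resp.\ $(X^2)_{\hat{S}}$, exactly as in Corollary~\ref{cor:sis}.

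The step I expect to be the main obstacle is precisely this multiplicity‑and‑quotient‑order bookkeeping: checking, from Proposition~\ref{prop:wLY}, that $E_\omega\cap\hat{S}$ carries fractional multiplicity $1/d_\bullet$ along each coordinate axis and that the reduced axis has class $d_\bullet e_\bullet H$ in $\PP^2_\omega$; verifying that the ramified‑projection computation of the off-diagonal terms is unaffected by the surface quotient singularities that $\hat{S}$ acquires along $\mathcal{L}$ and $\mathcal{P}$ (cf.\ Proposition~\ref{prop:wLY}\,\ref{caso2a},\ref{caso3a}); and keeping the weighted‑Bézout degree conventions coherent with $H^2=1/(e_1e_2e_3)$ throughout. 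Once these are in place, the statement is just the solution of the resulting linear system relating the intersection numbers.
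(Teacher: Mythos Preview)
Your proposal is correct and follows essentially the same route as the paper's proof: both mimic Propositions~\ref{prop:luengo} and~\ref{prop:luengo-ly}, use the Euler class identity $e(E_\omega)=-H$ to compute $(\cC_d\cdot C)_{\hat S}$, use the degree-$k$ ramified projection for the off-diagonal terms, and solve the resulting linear system for the self-intersections.

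The only difference is in the packaging of the $d_\bullet$-bookkeeping. The paper introduces an orbifold avatar $\hat S^{\orb}\subset\widehat\CC^3_\omega$ and an identification $\pi:\hat S\to\hat S^{\orb}$ under which $\pi^*X=\tfrac1{d_1}X$, etc.; it first obtains the formulas of Proposition~\ref{prop:luengo-ly} verbatim in $\hat S^{\orb}$ with $e_1e_2e_3$ in the denominator, and then passes to $\hat S$ by multiplying each occurrence of an axis by the relevant $d_\bullet$. You instead stay on the variety $\hat S$ throughout and absorb the same factors into two places: the $\Q$-divisor identity $E_\omega|_{\hat S}=\sum_i\cC_{d,i}+\tfrac{\varepsilon_x}{d_1}X+\cdots$, and the Weil class $[X]=d_1e_1H$ in $\operatorname{Pic}(\PP^2_\omega)\otimes\Q$ (equivalently, that the reduced $X$ is cut out by $x^{d_1}$ on the underlying variety $\PP^2_\eta$). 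These two moves are exactly dual to the paper's single orbifold-to-variety translation step, so the computations match line by line. Your explicit flag that $\rho^*C=kC$ must be checked through the quotient charts (where the local defining function of $X$ on $E_\omega$ is $x^{d_1}$, cancelling the $1/d_1$ coming from the $\hat S$-chart) is precisely the content hidden in the paper's sentence ``when $X$ appears, the formulas must be multiplied by $d_1$''.
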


\begin{proof}
We follow the ideas in the proofs of Propositions~\ref{prop:luengo} and~\ref{prop:luengo-ly} with some modifications.
If $\omega\neq\eta$, the map $\pi_{\eta,\omega}^{-1}:\PP^2_\eta\to\PP^2_\omega$ can be considered as the \emph{identity}
where $\PP^2_\omega$ is seen as $(\PP^2_\eta)^\orb$, where $(\pi_{\eta,\omega}^{-1})^*(X)=\frac{1}{d_1} X$, 
$(\pi_{\eta,\omega}^{-1})^*(Y)=\frac{1}{d_2} Y$, and $(\pi_{\eta,\omega}^{-1})^*(Z)=\frac{1}{d_3} Z$. Analogously, the abstract 
strict transform $\hat{S}$ has a natural orbifold embedded structure $\hat{S}^\orb\subset\widehat{\CC}^3_\omega$
where the embedding $\pi:\hat{S}\to\hat{S}^\orb$ has the same properties for $X,Y,Z$ as $\pi_{\eta,\omega}^{-1}$
whenever $X,Y,Z$ are contained in $\cC_d$.

The divisor
$e(E_\omega)$ in $E_\omega\equiv\PP^2_\omega$ has degree~$1$ and B\'ezout's Theorem for the $\omega$-projective
plane states that the sum of the intersection numbers of two divisors equals the product of the degrees
divided by~$e_1e_2e_3$. Hence, we obtain the same formulas as in Proposition~\ref{prop:luengo-ly} with two differences:
$e_1e_2e_3$ appears in the denominator and all the intersection numbers are considered in $\hat{S}^\orb$.

When we consider the intersection numbers in $\hat{S}$, when $X$ appears ($\varepsilon_x=1$), the formulas must be multiplied
by~$d_1$. A similar argument holds for $Y,Z$.
\end{proof}

\begin{cor}\label{cor:wly}
If $S$ is an $(\omega,k)$-weighted L{\^e}-Yomdin as above and $A$ is the intersection matrix of the blowing-up, then 

$$\det S=d_1^{2\varepsilon_x}\cdot d_2^{2\varepsilon_y}\cdot d_3^{2\varepsilon_z}\cdot \dd_1\cdot\ldots\cdot \dd_\s 
\cdot \left(\frac{d+k}{k e_1e_2e_3}\right)^{\s-1} \prod_{P\in \hat{S}} \det(\hat{S}_{k,P}),$$ 
where $\hat{S}_{k,P}$ is the surface singularity at $P$ as described in Proposition{\rm~\ref{prop:wLY}}.

In particular, if $\cC_d$ is stratified smooth, then $\det S=\frac{d}{e_1e_2e_3}$.
\end{cor}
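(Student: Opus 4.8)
The plan is to run the same argument as in the proofs of Corollaries~\ref{cor:sis} and~\ref{cor:ly}: Proposition~\ref{prop:luengo-wly} supplies the intersection data on the strict transform, Proposition~\ref{prop:wLY} supplies its local structure, and the two are glued together by Lemma~\ref{lem:detS}.

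First I would apply Lemma~\ref{lem:detS} to the morphism $\pi$ obtained by restricting the $\omega$-weighted blow-up $\Pi_\omega$ to the strict transform $\hat S$ of $S$. By Proposition~\ref{prop:wLY}, $\hat S$ is a normal surface, $\pi$ is an isomorphism away from $D:=\pi^{-1}(0)=E_\omega\cap\hat S=\cC_d$, and every point of $D$ at which $\hat S$ is singular carries one of the germs $\hat S_{k,P}$ described there. Lemma~\ref{lem:detS} then gives
\[
\det S=\det(-A)\cdot\prod_{P\in\hat S}\det(\hat S_{k,P}),
\]
with $A$ the intersection matrix of the components of $\cC_d$ in $\hat S$, so the whole statement reduces to the evaluation of $\det(-A)$ (the factor $\prod_P\det(\hat S_{k,P})$ being already in the asserted shape).

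For that, order the $\s+\varepsilon_x+\varepsilon_y+\varepsilon_z$ components of $\cC_d$ — the $\cC_{d,i}$ together with whichever of $X,Y,Z$ occur — and read off $A$ from items (1)--(5) of Proposition~\ref{prop:luengo-wly}. After clearing the common denominator, the off-diagonal $(C,C')$-entry is a product $\mu_C\mu_{C'}$ of ``effective degrees'' and the diagonal $(C,C)$-entry is $\mu_C^2$ minus a multiple of $\mu_C$, so $A$ is, up to a scalar, a diagonal matrix minus a rank-one matrix. Exactly as in the proof of Corollary~\ref{cor:sis}, replace the first row of $-A$ by the sum of all rows: the $C$-th row sum is $(C\cdot\cC_d)_{\hat S}=(e(E_\omega)\cdot C)$, and by B\'ezout on $\PP^2_\omega$ these are proportional to $\deg C$, their total being $\deg\cC_d=d=\varepsilon_xe_1+\varepsilon_ye_2+\varepsilon_ze_3+\sum_i\dd_i$. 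Adding suitable multiples of this ``degree row'' to the remaining rows kills every off-diagonal entry and triangularizes the matrix, whence the determinant collapses to the asserted product of the $\dd_i$'s, the $d_j^{2\varepsilon_\bullet}$'s and the power of $\frac{d+k}{ke_1e_2e_3}$ (equivalently, one applies the matrix--determinant lemma to $(d+k)\operatorname{diag}(\cdot)-\mu\mu^{T}$, the relation $\sum_C\deg C=d$ producing the factor $k/(d+k)$).

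The ``in particular'' clause is the specialization $\s=1$, $\varepsilon_x=\varepsilon_y=\varepsilon_z=0$: then $\cC_d$ is irreducible and, since it is stratified smooth, Proposition~\ref{prop:wLY}~\ref{caso1a},~\ref{caso2a}--\ref{caso2b},~\ref{caso3a} shows that $\hat S$ carries no singularity along $\cC_d$, so $\prod_P\det(\hat S_{k,P})=1$ and $\det S=-(\cC_d\cdot\cC_d)_{\hat S}=\tfrac{\dd_1(d-\dd_1+k)}{ke_1e_2e_3}=\tfrac{d}{e_1e_2e_3}$ since $\dd_1=d$. I expect the main obstacle to be the bookkeeping of the orbifold corrections: when a coordinate line is a component of $\cC_d$ the intersection numbers in $\hat S$ differ from those in the orbifold model $\hat S^{\orb}$ by powers of the $d_j$, and the B\'ezout denominator $e_1e_2e_3$ of $\PP^2_\omega$ must be carried correctly through the elimination; confirming that all of these factors reorganize into exactly $d_1^{2\varepsilon_x}d_2^{2\varepsilon_y}d_3^{2\varepsilon_z}$ and the stated power of $\frac{d+k}{ke_1e_2e_3}$ is the only delicate point, the rest of the argument being identical to the superisolated case treated in Corollary~\ref{cor:sis}.
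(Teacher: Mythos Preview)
Your overall strategy is exactly the one the paper indicates (``a similar proof to the one of Corollary~\ref{cor:sis}''): apply Lemma~\ref{lem:detS} to the restriction of the $\omega$-blow-up, read the intersection matrix off Proposition~\ref{prop:luengo-wly}, and triangularize via the row-sum trick (or, equivalently, the matrix–determinant lemma). That part is fine, and your remark that the only delicate point is tracking the orbifold corrections $d_j$ when an axis is a component is precisely right.

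There is, however, a slip in your treatment of the ``in particular'' clause. You claim that for a stratified smooth $\cC_d$ the cases \ref{caso1a}, \ref{caso2a}--\ref{caso2b}, \ref{caso3a} of Proposition~\ref{prop:wLY} show that $\hat S$ carries no singularity along $\cC_d$, and hence $\prod_P\det(\hat S_{k,P})=1$. But case~\ref{caso2a} says explicitly that at a transversal intersection of $\cC_d$ with the axis $Y$ one has $(\hat S,P)\cong\frac{1}{d_2}(e_2,-1)$, which is a genuine cyclic quotient singularity of determinant $d_2$ whenever $d_2>1$. So in general $\prod_P\det(\hat S_{k,P})\neq 1$; your argument as written only goes through when $\omega=\eta$, i.e.\ $d_1=d_2=d_3=1$. (The Brieskorn--Pham computation in \S\ref{ex:bp} shows exactly this: there $\det(-A)=\frac{d}{e_1e_2e_3}$, but $\det S$ picks up the extra factors $d_1^{e\alpha_1}d_2^{e\alpha_2}d_3^{e\alpha_3}$ from the axis points.) For the general statement you should either record those cyclic contributions explicitly, or make clear that the displayed product $\prod_{P\in\hat S}\det(\hat S_{k,P})$ is meant to absorb them.
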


\section{Normal surface singularities with rational homology sphere links}

In this section we will use the results and strategies presented in section~\ref{sec:LY} in order 
to exhibit examples of weighted L{\^e}-Yomdin singularities whose links are rational homology spheres,
generalizing the strategy in~\cite{fblmn}. We will be using Proposition~\ref{prop:ly_rhs} in the 
context of weighted L{\^e}-Yomdin singularities.

\subsection{Brieskorn-Pham singularities}
\label{ex:bp}
We will interpret these singularities as L{\^e}-Yomdin singularities and study their $\Q$-resolution graph. 
Consider $\omega_0=(n_1,n_2,n_3)$ and the Brieskorn-Pham singularity $S=\{F_{\omega_0}=x^{n_1}+y^{n_2}+z^{n_3}=0\}\subset (\CC^3,0)$,
where $n_1,n_2,n_3$ are not assumed to be coprime.

Denote by $e:=\gcd\omega_0$, and $\alpha_{k}:=\frac{1}{e}\gcd(n_i,n_j)$,
where $\{i,j,k\}=\{1,2,3\}$. Note that $d_i:=\frac{n_i}{e\alpha_j \alpha_k}\in \ZZ_{>0}$ are pairwise coprime.
If 
$$
\omega=(e_1,e_2,e_3):=\frac{1}{e^2\alpha_{1}\alpha_{2}\alpha_{3}}
{\left(n_2n_3,n_1n_3,n_1n_2\right)}=
(\alpha_1 d_2 d_3,\alpha_2 d_1 d_3,\alpha_3 d_1 d_2),
$$
then $F_{\omega_0}(x,y,z)$ is an $\omega$-weighted homogeneous polynomial of degree $d:=\frac{n_1 n_2 n_3}{e^2\alpha_{1}\alpha_{2}\alpha_{3}}$
and hence $S$ can be viewed as an $(\omega,k)$-weighted L{\^e}-Yomdin singularity for any $k\geq 1$. 
Following the general construction, $f_d=F_{\omega_0}(x,y,z)=g(x^{d_1},y^{d_2},z^{d_3})=0$ 
can be considered a curve in $\PP^2_\eta\cong \PP^2_\omega$ for $\eta=(\alpha_1,\alpha_2,\alpha_3)$ of $\eta$-degree 
$d_\eta=e\alpha_1\alpha_2\alpha_3$ given by the equation $g(x,y,z)=x^{e\alpha_2\alpha_3}+x^{e\alpha_1\alpha_3}+z^{e\alpha_1\alpha_2}=0$. Its genus is 
$$
\frac{d_\eta(d_\eta-|\eta|)}{2\alpha_1\alpha_2\alpha_3}+1=\frac{e^2\alpha_1\alpha_2\alpha_3-
e(\alpha_1+\alpha_2+\alpha_3)+2}{2}.
$$
Since the curve $\cC_d$ is transversal to the axes we obtain that the exceptional locus of $\hat{S}$ has (in the 
intersection with the axes) $e \alpha_i$ cyclic points of order $d_i$.
The determinant of the singularity is
\[
\frac{d}{(d_1d_2d_3)^2 (\alpha_1\alpha_2\alpha_3)} \left(d_1^{\alpha_1} d_2^{\alpha_2} d_3^{\alpha_3}\right)^e=
e d_1^{e\alpha_1-1} d_2^{e\alpha_2-1} d_3^{e\alpha_3-1}.
\]
As a consequence of this discussion one obtains the following.

\begin{prop}
\label{prop:BPham}
The Brieskorn-Pham singularity $S=\{F_{\omega_0}=x^{n_1}+y^{n_2}+z^{n_3}=0\}\subset (\CC^3,0)$ is a rational homology
sphere singularity if and only if either $\alpha_1=\alpha_2=\alpha_3=1, e=2$ or $\alpha_i=\alpha_j=e=1$ for some $i\neq j$.

Moreover, it is an integral homology sphere if and only if the exponents are pairwise coprime.
\end{prop}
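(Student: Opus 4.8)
The plan is to read both assertions off the partial resolution $\hat S$ of $S$ by the $\omega$-weighted blow-up $\Pi_\omega$, using the criteria of Corollary~\ref{cor:QHS} together with the two computations carried out in the discussion preceding the statement: the genus of the quasi-tangent cone $\cC_d$ and the determinant $\det S$. The first thing to check is that $\cC_d=\{f_d=0\}\subset\PP^2_\eta$, which corresponds to $g(x,y,z)=x^{e\alpha_2\alpha_3}+y^{e\alpha_1\alpha_3}+z^{e\alpha_1\alpha_2}$ with $\eta=(\alpha_1,\alpha_2,\alpha_3)$, is stratified smooth (in the sense of Definition~\ref{def:ssmooth}): each monomial is a pure power, so $g$ is a unit at each vertex $P_x,P_y,P_z$ of $\PP^2_\eta$, while the partial derivatives of $g$ have no common zero on $\{g=0\}$, so $\cC_d$ is quasi-smooth and misses $\sing\PP^2_\eta$, hence is smooth; being an effective ample divisor on the normal projective surface $\PP^2_\eta$, it is connected, hence irreducible. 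Then Proposition~\ref{prop:wLY} applies with only the ``good'' cases \ref{caso1a}, \ref{caso2a}, \ref{caso3a} occurring (since $\sing\cC_d=\emptyset$, $\cC_d$ meets the axes transversally and avoids the vertices): $\hat S$ is a partial resolution of $S$ whose exceptional divisor is $\cC_d$ and whose only singular points are $e\alpha_i$ cyclic quotient singularities of order $d_i$ on the $i$-th coordinate axis. Resolving each of those replaces it by a bamboo of smooth rational curves attached to $\cC_d$, so the dual graph $\Gamma$ of the resulting resolution of $S$ is the single vertex $\cC_d$ with pendant bamboos; in particular $\Gamma$ is always a tree. Hence, by Corollary~\ref{cor:QHS}, $S$ has a rational homology sphere link if and only if every vertex of $\Gamma$ is rational, i.e.\ if and only if $\cC_d$ has genus $0$.

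The next step is to solve the genus condition. By the computation in the discussion, $g(\cC_d)=\frac12\bigl(e^2\alpha_1\alpha_2\alpha_3-e(\alpha_1+\alpha_2+\alpha_3)+2\bigr)$, so one must describe the positive integer solutions, with $\alpha_1,\alpha_2,\alpha_3$ pairwise coprime, of
\[
e^2\alpha_1\alpha_2\alpha_3+2=e(\alpha_1+\alpha_2+\alpha_3).
\]
Since $\alpha_1+\alpha_2+\alpha_3\le 3\alpha_1\alpha_2\alpha_3$, this forces $e^2<3e$, hence $e\in\{1,2\}$; ordering $\alpha_1\le\alpha_2\le\alpha_3$ and using also $\alpha_1+\alpha_2+\alpha_3\le 3\alpha_3\le 3\alpha_1\alpha_2\alpha_3$ one gets $\alpha_1\alpha_2\le 2$ when $e=1$ and $\alpha_1\alpha_2=1$ when $e=2$, and a one-line check of the few remaining possibilities leaves exactly the solutions $e=1$, $\alpha_1=\alpha_2=1$, $\alpha_3$ arbitrary, and $e=2$, $\alpha_1=\alpha_2=\alpha_3=1$. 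Up to permuting the indices this is precisely the stated alternative ($\alpha_i=\alpha_j=e=1$ for some $i\ne j$, or $\alpha_1=\alpha_2=\alpha_3=1$ and $e=2$).

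For the integral statement I would invoke Corollary~\ref{cor:QHS} once more: among the singularities just isolated, the link is an \emph{integral} homology sphere if and only if $\det S=1$, and by the determinant formula in the discussion $\det S=e\,d_1^{e\alpha_1-1}d_2^{e\alpha_2-1}d_3^{e\alpha_3-1}$. In the case $e=2$, $\alpha_1=\alpha_2=\alpha_3=1$ this equals $2d_1d_2d_3\ge 2$, so it never yields an integral homology sphere — consistently, the exponents $n_i=2d_i$ are then not pairwise coprime. In the case $e=1$, $\alpha_1=\alpha_2=1$ (and symmetrically for the other choices of the distinguished index), one has $d_3=n_3$ and $\alpha_3=\gcd(n_1,n_2)$, so $\det S=n_3^{\gcd(n_1,n_2)-1}$; assuming $S$ is genuinely singular ($n_3\ge 2$), this equals $1$ if and only if $\gcd(n_1,n_2)=1$, which, together with $\alpha_1=\alpha_2=1$ (that is, $\gcd(n_2,n_3)=\gcd(n_1,n_3)=1$), says exactly that $n_1,n_2,n_3$ are pairwise coprime. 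Conversely, if the $n_i$ are pairwise coprime then $e=\alpha_1=\alpha_2=\alpha_3=1$, so the link is a rational homology sphere by the first part and $\det S=1$, hence it is an integral homology sphere.

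The delicate points are the structural ones at the start: identifying $\hat S$ and its singular locus through Proposition~\ref{prop:wLY}, which hinges on verifying that $\cC_d$ is stratified smooth so that none of the ``bad'' branches of that statement is triggered and $\cC_d$ is in particular irreducible; and, for the integral assertion, the arithmetic bookkeeping relating $d_i$, $e_i$ and $n_i$ behind the determinant formula. The Diophantine analysis itself is routine.
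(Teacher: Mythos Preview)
Your argument is correct and follows exactly the approach intended by the paper: the proposition is stated there as a direct consequence of the preceding discussion (genus of $\cC_d$, transversality with the axes, and the determinant formula), and you have simply spelled out the missing steps---checking that $\cC_d$ is stratified smooth and irreducible, solving the genus-zero Diophantine condition, and reading off $\det S=1$ in each case. There is no divergence in strategy.
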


\subsection[Examples from Cremona and Kummer]{Examples coming from Cremona transformations and Kummer covers}
\label{sec:RHSCremona}
The purpose of this section is to provide more candidates to surface singularities with rational homology sphere links by applying the 
techniques used in~\S\ref{sec:ratcusp}. In particular, we will start with the strict transforms of the conic by the Cremona transformations.

In order to do so one needs $\omega:=(\alpha_1,\alpha_2,\alpha_3)$ pairwise coprime, 
and $\beta_1,\beta_2\in\ZZ_{>0}$ such that $\alpha_1 \alpha_2+\alpha_3 =\alpha_1\beta_1+\alpha_2\beta_2$. The weighted homogeneous polynomial
\[
\begin{aligned}
f_\omega(x,y,z)&=f(y^{\alpha_1} z,x^{\alpha_2} z,x^{\beta_1}y^{\beta_2})\\
&=y^{2\alpha_1} z^2+ x^{2\alpha_2} z^2+ x^{2\beta_1} y^{2\beta_2}-2 z\left(x^{\alpha_2}  y^{\alpha_1}  z+ x^{\beta_1} 
y^{\alpha_1+\beta_2}+ x^{\alpha_2+\beta_1} y^{\beta_2}\right)
\end{aligned}
\]
has $\omega$-degree $2(\alpha_1 \alpha_2+\alpha_3 )$ and defines a rational curve in $\PP^2_\omega$ which is smooth outside the vertices. 
Assume for simplicity that $\alpha_1 \alpha_2+\alpha_3 <\alpha_1\alpha_2\alpha_3$. Hence for any generic quasi-homogeneous polynomial 
$g_\omega(x,y,z)$ of degree $2\alpha_1\alpha_2\alpha_3$, $F:=f_\omega+g_\omega$ defines an $(\omega,k)$-weighted L{\^e}-Yomdin
singularity, for $k=2(\alpha_1\alpha_2\alpha_3-\alpha_1\alpha_2-\alpha_3)$. A partial resolution of this singularity has an exceptional 
locus which is a rational curve with three singular points. In most cases the link of this singularity is a rational homology sphere. 
For simplicity, we will prove it in a special case.

\begin{prop}
With the previous notation, take $\alpha_1=1$, $\beta_1=\alpha_3$, and $\beta_2=1$.
Then, for any $\alpha_2,\alpha_3>1$ satisfying $\gcd(3,k)=\gcd(3,\alpha_2\alpha_3-\alpha_2-\alpha_3)=1$ and a generic $g_\omega$, the equation
$\{F=f_\omega+g_\omega=0\}\subset \mathbb{C}^3$ 
defines a surface singularity with a rational homology sphere link.
\end{prop}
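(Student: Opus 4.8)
The plan is to build a partial resolution of $S=\{F=0\}$ all of whose singular points can be described explicitly, and then to conclude with Corollary~\ref{cor:QHS}. First, with $\alpha_1=1$, $\beta_1=\alpha_3$, $\beta_2=1$ one checks $\alpha_1\beta_1+\alpha_2\beta_2=\alpha_2+\alpha_3=\alpha_1\alpha_2+\alpha_3$, and since $\alpha_2,\alpha_3>1$ are coprime, $(\alpha_2-1)(\alpha_3-1)\ge2$, so $\alpha_1\alpha_2+\alpha_3<\alpha_1\alpha_2\alpha_3$. Thus $F=f_\omega+g_\omega$ is an $(\omega,k)$-weighted L\^e-Yomdin singularity with $k=2(\alpha_2\alpha_3-\alpha_2-\alpha_3)>0$ and $\omega$-quasi-tangent cone $\cC_d=\{f_\omega=0\}$, which by construction is the strict transform $\tilde{\cC}$ of the smooth conic $\mathcal C$ under $\Phi_{\omega,\beta_1,\beta_2}$. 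Being a birational transform of $\mathcal C\cong\PP^1$, the curve $\tilde{\cC}$ is irreducible and rational; from the blow-up/blow-down description of $\Phi$ in \S\ref{sec:cremona} it is smooth away from the vertices and unibranch (cuspidal) at each of $P_x,P_y,P_z$, with local embedded $\Q$-resolutions as in Figure~\ref{fig:rcc}. Moreover $\tilde{\cC}$ meets the coordinate axes of $\PP^2_\omega$ only at the vertices (on $z=0$ the form $f_\omega$ restricts to $x^{2\beta_1}y^{2\beta_2}$, and similarly on the other two axes), so no axis is a component of $\tilde{\cC}$.

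Next, let $\hat S\to S$ be the restriction of the weighted blow-up $\Pi_\omega$. By Proposition~\ref{prop:wLY} the exceptional divisor is $E_\omega\cap\hat S=\tilde{\cC}$, a genus-$0$ curve; by cases \ref{caso1a}, \ref{caso2a}, \ref{caso2b} (using that $\tilde{\cC}$ meets the axes only at vertices) $\hat S$ is smooth away from $P_x,P_y,P_z$; and by case \ref{caso3b}, for each $\bullet\in\{x,y,z\}$ the germ $(\hat S,P_\bullet)$ is a cyclic cover $\{w^k=f_d(x,y,1)\}$ inside the corresponding $3$-fold quotient singularity. Resolving the singularities of $\hat S$ therefore yields a resolution of $S$ whose dual graph is a single genus-$0$ vertex (the strict transform of $\tilde{\cC}$) with the local exceptional graphs over $P_x,P_y,P_z$ attached at three distinct points; since $\tilde{\cC}$ is smooth outside the vertices and unibranch at each, its strict transform is a smooth rational curve meeting each local graph along a single edge, so this graph is a tree of rational curves if and only if all three local graphs are. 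By Corollary~\ref{cor:QHS} it thus suffices to prove that $(\hat S,P_x)$, $(\hat S,P_y)$, $(\hat S,P_z)$ are rational homology sphere singularities.

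The decisive case is $P_x=[1:0:0]_\omega$, which is a \emph{smooth} point of $\PP^2_\omega$ because $\alpha_1=1$, and at which the blow-up and the blow-down of $\Phi$ are both standard (types $(1,\beta_2)=(1,1)$ and $(\alpha_1,\beta_2)=(1,1)$); hence $\tilde{\cC}$ has an ordinary cusp $\{u^2=v^3\}$ at $P_x$ and $(\hat S,P_x)\cong\{x^2+y^3+z^k=0\}$. By Proposition~\ref{prop:BPham} this Brieskorn--Pham germ has a rational homology sphere link if and only if $\gcd(3,k)=1$ (it is never an integral homology sphere, since $k$ is even), and $\gcd(3,k)=\gcd(3,\alpha_2\alpha_3-\alpha_2-\alpha_3)$, which is precisely the hypothesis. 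For $P_y$ and $P_z$, where $\alpha_2,\alpha_3>1$ create genuine cyclic quotient points, one resolves $(\hat S,P_y)$ and $(\hat S,P_z)$ explicitly by the method of Propositions~\ref{prop:singpz} and \ref{prop:singpx}: first the one or two weighted blow-ups resolving the cusp of $\tilde{\cC}$ recorded in Figure~\ref{fig:rcc}, then a Hirzebruch--Jung resolution of the resulting cyclic quotient points together with the cover $w\mapsto w^k$ along each new exceptional divisor. One verifies that every exceptional curve produced is rational and the dual graph is a tree. The only way this can fail is for some exceptional component to have positive genus, which for a cyclic $k$-cover of a unibranch curve germ forces the induced branched cover of some exceptional line to have positive genus; tracking branch data, the exponent $3$ (present in the cusp $u^2=v^3$ at $P_x$ and in the local equation $x^{\alpha_3}y^3-(x^{\alpha_2}+y)^2$ at $P_z$, where it is the sole source of such a cover) is excluded by $\gcd(3,k)=1$, while the remaining weights in Figure~\ref{fig:rcc} involve only the exponent $2$ and the $\alpha_i$, which by the coprimality relations collected in \S\ref{sec:ratcusp} produce no positive-genus cover. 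Feeding these three local facts into Corollary~\ref{cor:QHS} proves the proposition; Corollary~\ref{cor:wly} then also computes $\det S$.

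The hard part is the explicit resolution and rationality/tree check at $P_y$ and $P_z$: one must transport the cover $w\mapsto w^k$ through a weighted blow-up performed over a quotient singularity and confirm that no elliptic exceptional curve appears and that the dual graph stays a tree. In contrast $P_x$ is immediate, reducing verbatim to the classical Brieskorn--Pham computation of Proposition~\ref{prop:BPham}, and it is that reduction which pins down the numerical condition $\gcd(3,\alpha_2\alpha_3-\alpha_2-\alpha_3)=1$.
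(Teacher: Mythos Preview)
Your global strategy---perform the $\omega$-weighted blow-up, identify the exceptional curve with the rational cuspidal $\tilde{\cC}$, and reduce via Corollary~\ref{cor:QHS} to checking that $(\hat S,P_x)$, $(\hat S,P_y)$, $(\hat S,P_z)$ are rational homology sphere germs---is exactly the paper's, and your treatment of $P_x$ coincides with the paper's computation.

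The gap is at $P_y$ and $P_z$. You flag these as ``the hard part'' and then do not actually prove anything: ``one resolves explicitly \ldots one verifies \ldots'' is a plan, not an argument. The heuristic that ``the exponent~$3$ is the sole source'' of a possible positive-genus exceptional curve is not justified, and the local equation you quote at $P_z$, $x^{\alpha_3}y^3-(x^{\alpha_2}+y)^2$, is wrong: it is copied from the \emph{cubic} computation in Proposition~\ref{prop:singpz}, whereas for the conic one finds (after $y_1=y-x^{\alpha_2}$) a germ of the shape $y_1^2+c\,x^{2\alpha_2+\alpha_3}+\dots$ in the quotient $\frac{1}{\alpha_3}(1,\alpha_2)$, with no cube. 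So even the data you propose to ``track'' are off.

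The paper sidesteps the explicit resolution entirely. At $P_y$ the ambient germ is $\frac{1}{\alpha_2}(1,-1,\alpha_3)$ and the local equation of $\hat S$ is
\[
y^k+z^2+x^{2\alpha_2}z^2+x^{2\alpha_3}-2z\bigl(x^{\alpha_2}z+x^{\alpha_3}+x^{\alpha_2+\alpha_3}\bigr)=0.
\]
The single observation you are missing is that the substitution $z_1=z-x^{\alpha_3}$ is \emph{compatible with the $\mu_{\alpha_2}$-action} (both terms have weight $\alpha_3$) and brings this to the Brieskorn--Pham shape $y^k+z_1^2+c\,x^{\alpha_2+2\alpha_3}+\dots$ inside the same cyclic quotient; one then argues exactly as at $P_x$. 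The case $P_z$ follows by the symmetry $(y,z,\alpha_2)\leftrightarrow(z,y,\alpha_3)$ of $f_\omega$ (visible once you write $f_\omega=(yz)^2+(x^{\alpha_2}z)^2+(x^{\alpha_3}y)^2-2\bigl[(yz)(x^{\alpha_2}z)+(yz)(x^{\alpha_3}y)+(x^{\alpha_2}z)(x^{\alpha_3}y)\bigr]$). Thus $P_y$ and $P_z$ are not harder than $P_x$ at all; the missing idea is this one equivariant change of variable, after which no explicit resolution or branch-data bookkeeping is needed.
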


\begin{proof}
We study the strict transform of this singularity at $P_x,P_y,P_z$ after an $\omega$-weighted blow-up. 
At $P_x$, the ambient space is smooth and the strict transform has equation 
\[
0=x^k+y^{2} z^2+ z^2+ y^{2}-2 y z\left( z+  y+ 1\right)=x^k+(y_1+z)^{2} z^2+ y_1^{2}-2 (y_1+z) z\left( 2z+  y_1\right)
\]
if $y_1=y-z$. This is topologically equivalent to $0=x^k+y^2+z^3$. 
Since $\gcd(3, \alpha_2\alpha_3-\alpha_2-\alpha_3)=1$, by Proposition~\ref{prop:BPham} the link of this singularity is a rational homology sphere.

At $P_y$, the ambient space is $\frac{1}{\alpha_2}(1,-1,\alpha_3)$ and the strict transform has equation 
$$
\begin{aligned}
f_\omega(x,y,z)&=y^k+z^2+ x^{2\alpha_2} z^2+ x^{2\alpha_3}-2 z\left(x^{\alpha_2}  z+ x^{\alpha_3} + x^{\alpha_2+\alpha_3} \right)\\
&=y^k+z_1^2-2 x^{\alpha_2+2\alpha_3}+\dots
\end{aligned}
$$
if $z_1=z-x^{\alpha_3}$. This change of variable is compatible with the action; this equation defines a singularity in $\CC^3$  whose 
link is a rational homology sphere, and so is the case in the quotient manifold.

By symmetry arguments, the same happens for $P_z$. Hence, $F$ defines a singularity  whose link is a rational homology sphere.
\end{proof}

Let us use the orbifold approach. Given $(d_1,d_2,d_3)$ pairwise coprime consider $\omega:=(d_2 d_3,d_1 d_3, d_1 d_2)$; the normalized $\eta$ is $(1,1,1)$
and the isomorphism $\mathbb{P}^2_\omega\to \mathbb{P}^2$ is given by $[x:y:z]_\omega\mapsto [x^{d_1}:y^{d_2}:z^{d_3}]$, see~\eqref{eq:P2isom}. 
This isomorphism can be seen as a \emph{weighted Kummer cover} 
and the homogeneous polynomial
\[
f_\omega(x,y,z)=f(x^{d_1},y^{d_2},z^{d_3})=x^{2d_1} + y^{2d_2}+ z^{2d_3} -2 \left(y^{d_2} z^{d_3}+ x^{d_1} z^{d_3}+ x^{d_1} y^{d_2}\right)
\]
of $\omega$-degree $2 d_1d_2d_3$, which defines a rational curve in $\PP^2_\omega\cong\PP^2$ and it is tangent to the axes. 
In most cases the link of this singularity is a rational homology sphere. Let us study a special case.

\begin{prop}
For any generic quasi-homogeneous polynomial $g_\omega(x,y,z)$ of degree $3d_1d_2d_3$, and $d_1,d_2,d_3$ odd numbers
$\{F:=f_\omega+g_\omega=0\}\subset \mathbb{C}^3$ defines a surface singularity with a rational homology sphere link.
\end{prop}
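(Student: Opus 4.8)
The plan is to realize $\{F=0\}$ as a weighted L\^e--Yomdin singularity and then apply the tree criterion of Corollary~\ref{cor:QHS} to the partial resolution given by a single weighted blow-up, reducing everything to three Brieskorn--Pham-type local germs.

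First I would rewrite $F$. Since $\omega=(d_2d_3,d_1d_3,d_1d_2)$, every monomial of $\omega$-degree a multiple of $d_1d_2d_3$ is of the form $x^{ad_1}y^{bd_2}z^{cd_3}$, so $F=(f+g)(x^{d_1},y^{d_2},z^{d_3})$ where $f$ is the quadratic form cutting out the tritangent conic and $g$ is a generic cubic; thus $f_\omega$ has $\omega$-degree $d=2d_1d_2d_3$, $g_\omega$ has $\omega$-degree $3d_1d_2d_3$, and the L\^e--Yomdin parameter is $k=d_1d_2d_3$. A direct computation shows that $\Jac(f_\omega)\cap\supp(f_\omega)$ is exactly the finite set of the three tangency points $Q_x=[0:1:1]_\omega$, $Q_y=[1:0:1]_\omega$, $Q_z=[1:1:0]_\omega$ of $\cC_\omega:=\supp(f_\omega)$ with the coordinate axes: smoothness of the conic forces the ordinary partials of $f$ not to vanish simultaneously on $\cC_\omega$, while the extra factors $x^{d_1-1},y^{d_2-1},z^{d_3-1}$ produced by the Kummer substitution contribute the vanishing precisely over those tangency points. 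A generic $g_\omega$ avoids these three points, so $\{F=0\}$ is an $(\omega,k)$-weighted L\^e--Yomdin singularity with quasi-tangent cone $\cC_\omega$, which by~\eqref{eq:P2isom} is the smooth conic of $\PP^2_\eta=\PP^2$; in particular $\cC_\omega$ is smooth and rational.

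Next I would describe the strict transform $\hat S$ of $\{F=0\}$ under the $\omega$-weighted blow-up $\Pi_\omega$ by means of Proposition~\ref{prop:wLY}. Since $\cC_\omega$ is smooth, along $\cC_\omega\cap\mathcal T$ the surface $\hat S$ is smooth and transverse to $E_\omega$ (case~\ref{caso1a}); the only special points of $\cC_\omega$ are $Q_x,Q_y,Q_z$, which lie on the $1$-dimensional strata $\mathcal L_x,\mathcal L_y,\mathcal L_z$ and are not vertices. At $Q_x$ we are in case~\ref{caso2c}: $Q_x\notin\cC_{d+k}$, and $(\hat S,Q_x)$ is the germ at the origin of $\{z^{k}=h(\xi,\eta)\}$ in the $3$-fold quotient $\frac1{d_1}(e_1,0,-1)$ with coordinates $(\xi,\eta,z)$ and $E_\omega=\{z=0\}$, where $h=0$ is the local equation of $\cC_\omega$. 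Restricting $f$ near its tangency with $\{x=0\}$ and substituting $x\mapsto x^{d_1}$, $y\mapsto y^{d_2}$ shows that $h$ equals $\eta^2+\xi^{d_1}$ up to a $\mu_{d_1}$-equivariant unit, so $(\hat S,Q_x)$ is the germ of $\{z^{d_1d_2d_3}+\eta^2+\xi^{d_1}=0\}$ in $\frac1{d_1}(e_1,0,-1)$, and symmetrically at $Q_y,Q_z$ with $d_1$ replaced by $d_2,d_3$.

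Finally I would build a resolution of $\{F=0\}$ from $\Pi_\omega$ by resolving the germs $(\hat S,Q_i)$ and conclude with Lemma~\ref{lem:detS} and Corollary~\ref{cor:QHS}. Each $(\hat S,Q_i)$ is a cyclic quotient (acting freely away from the origin) of the Brieskorn--Pham germ $\{z^{d_1d_2d_3}+\eta^2+\xi^{d_i}=0\}$; by Proposition~\ref{prop:BPham} (here $e=1$ and two of the $\alpha$'s equal $1$, using that all $d_i$ are odd) the latter has a rational homology sphere link, and a free cyclic quotient of a rational homology sphere is again one, so $(\hat S,Q_i)$ has a star-shaped resolution graph which is a tree of rational curves. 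Moreover, $d_i$ odd makes $\eta^2+\xi^{d_i}$ irreducible, so $\cC_\omega$ is unibranch at $Q_i$ and its strict transform meets the exceptional locus of the local resolution at a single point transversally, contributing one edge and no cycle (for $d_i$ even, $\cC_\omega$ would have two branches there and the glued graph a cycle). Assembling this: the dual graph of the resolution consists of a central rational vertex $\cC_\omega$ (genus $0$, since $\cC_\omega\cong\PP^1$), the three star-shaped trees of the $(\hat S,Q_i)$ attached to it by single edges, and nothing else, because $\hat S$ is already smooth and transverse to $E_\omega$ elsewhere. Hence the graph is a tree of rational curves and, by Corollary~\ref{cor:QHS}, the link of $\{F=0\}$ is a rational homology sphere. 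The main obstacle is precisely this last local step --- identifying the three germs $(\hat S,Q_i)$ and checking that the cuspidal curvette $\cC_\omega$ on each of them is resolved by a tree attached along one edge --- and it is exactly here that the hypothesis that $d_1,d_2,d_3$ be odd is used.
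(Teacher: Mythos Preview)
Your proof is correct and follows the same route as the paper: recognize $\{F=0\}$ as an $(\omega,d_1d_2d_3)$-weighted L\^e--Yomdin singularity, perform the $\omega$-blow-up, identify each of the three local germs at the tangency points as a cyclic quotient of the Brieskorn--Pham surface $z^{d_1d_2d_3}+u^2+v^{d_i}=0$, and invoke Proposition~\ref{prop:BPham} with the oddness hypothesis to conclude. You are slightly more explicit than the paper about the unibranch condition at each $Q_i$ (so that the assembled dual graph remains a tree), but the strategy is identical.
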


\begin{proof}
Note that $\{F=0\}$  defines an $(\omega,k)$-weighted L{\^e}-Yomdin singularity, for $k=d_1d_2d_3$. 
A partial resolution of this singularity has an exceptional locus which is a rational curve with three singular points 
(corresponding to the tangencies). In most cases the link of this singularity is a rational homology sphere. 

By symmetry reasons we study only the strict transform of this singularity at the tangency point with $Y$ after an 
$\omega$-weighted blow-up. After a change of coordinates the local equation of $F$ is
\begin{gather*}
0=z^{d_1d_2d_3}+ (x+1)^{2d_1} + y^{2d_2}+ 1 -2 \left(y^{d_2} + (x+1)^{d_1} + (x+1)^{d_1} y^{d_2}\right)\\=
z^{d_1d_2d_3}+{d_1}^2 x^2 -2 y^{d_2}+\dots
\end{gather*}
and the ambient space is $\frac{1}{d_2}(0,d_1d_3,-1)$. Since $d_1,d_2,d_3$ are odd numbers, by Proposition~\ref{prop:BPham} this 
equation defines a singularity in $\CC^3$ whose link is a rational homology sphere, and so is the case in the quotient manifold.
\end{proof}

\subsection{New examples of integral homology sphere surface singularities}
\label{sec:newexamples}
Note that the only integral homology spheres we have found are well known in the literature, which justifies this subsection.

Following ideas of the third named author, Veys, and Vos, we present an infinite family of normal 
surface singularities which are complete intersection in~$\mathbb{C}^4$ and whose links are integral 
homology spheres. The examples given here can be generalized to any dimension.

Let $n_0,n_1,n_2,n_3\in \mathbb{Z}_{>0}$ and $b_{20},b_{21},b_{30},b_{31},b_{32}\in \mathbb{Z}_{\geq 0}$.
Consider $S$ the surface singularity 
in $(\mathbb{C}^4,0)$ defined by
\begin{equation}
\label{eq:SJorge}
S=\{f_1 + f_2 = f_2 + f_3 = 0\}\subset (\mathbb{C}^4,0), \text{ where }
\begin{cases}
f_1 = x_1^{n_1} - x_0^{n_0}, \\
f_2 = x_2^{n_2} - x_0^{b_{20}} x_1^{b_{21}}, \\
f_3 = x_3^{n_3} - x_0^{b_{30}} x_1^{b_{31}} x_2^{b_{32}}.
\end{cases}
\end{equation}
Note that the family of surfaces $S$ contains the Brieskorn-Pham surface singularities, for instance 
when $n_0=b_{30}=1$, $b_{20}=b_{21}=b_{31}=b_{32}=0$.

The purpose of this section is to show when the link of $S$ is a rational homology sphere 
as well as to characterize when it is integral. The idea is to resolve $S$ with $\mathbb{Q}$-normal 
crossings and apply Lemma~\ref{lem:detS} to compute $\det S$. In order to do so
we consider the Cartier divisors of $S$ defined by $Y = \{ f_1 = 0 \}$ and $H_i = \{x_i = 0\}$, $i=0,1,2$.
This family was recently studied in Vos' PhD thesis in a more general context and we just briefly
discuss here the construction of the partial resolution obtained in~\cite[section 5]{MVV19}.

\begin{thm}
Let $S\subset (\mathbb{C}^4,0)$ be the surface singularity defined above. Assume  
$n_0,n_1,n_2,n_3\in \mathbb{Z}_{>0}$ are pairwise coprime, then $S$ is a rational homology sphere.
Moreover, in that case $S$ is an integral homology sphere singularity if and only if 
$m:=\gcd(n_3,b_{20}n_1+b_{21}n_0)=1$.
\end{thm}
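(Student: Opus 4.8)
The plan is to read $\det S$ off an explicit partial resolution and then invoke the two criteria recalled in the introduction: the link of a normal surface singularity is a rational homology sphere exactly when some partial resolution has a dual graph that is a tree of rational curves, and such a link is an integral homology sphere exactly when moreover $\det S=1$ (Corollary~\ref{cor:QHS} together with Lemma~\ref{lem:detS}).

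First I would unwind the iterated structure of $S$. Combining the two equations in~\eqref{eq:SJorge} one gets $f_1=-f_2=f_3$ on $S$, so, forgetting $x_3$, the projection exhibits $S$ as the $n_3$-cyclic cover of the hypersurface
\[
S'=\{x_2^{n_2}=x_0^{n_0}-x_1^{n_1}+x_0^{b_{20}}x_1^{b_{21}}\}\subset\CC^3
\]
branched along $B=\{x_0^{b_{30}}x_1^{b_{31}}x_2^{b_{32}}-x_0^{n_0}+x_1^{n_1}=0\}\subset S'$, while $S'$ is itself the $n_2$-cyclic cover of $\CC^2_{x_0,x_1}$ branched along the plane curve germ $B'=\{x_0^{n_0}-x_1^{n_1}+x_0^{b_{20}}x_1^{b_{21}}=0\}$. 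Next I would invoke the partial resolution $\pi\colon(X,D)\to(S,0)$ of~\cite[Section~5]{MVV19}: it is obtained from a tower of weighted blow-ups following the chain $x_0\to x_1\to x_2\to x_3$, and one reads off from that construction that $X$ has only cyclic quotient singularities and that $D=\pi^{-1}(0)$ is a union of rational curves meeting in a tree. Rationality of each exceptional curve is a Brieskorn-type genus formula which vanishes once $n_0,n_1,n_2,n_3$ are pairwise coprime (as in Proposition~\ref{prop:BPham}), and the tree shape is forced by the chain structure of the blow-ups; hence the first assertion follows from Corollary~\ref{cor:QHS}.

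For the integrality statement, Lemma~\ref{lem:detS} gives $\det S=\det(-A)\cdot\prod_{P\in\sing X}\det(X,P)$, where $A$ is the intersection matrix of $D$ and $\det(X,P)$ is the order of the local cyclic group. I would then argue --- by the same row-and-column reductions used in the proofs of Corollaries~\ref{cor:sis}, \ref{cor:ly} and~\ref{cor:wly}, which exploit the diagonal dominance produced by weighted blow-ups --- that $\det(-A)=1$, and that pairwise coprimality of the $n_i$ kills all but one of the factors $\det(X,P)$. The surviving factor is the order of the single cyclic quotient singularity created in the final ($x_3$-)stage, along the exceptional curve coming from $f_2$, where the strict transform of $B$ interacts with it; that order is governed by the $\omega$-degree $b_{20}n_1+b_{21}n_0$ of the monomial $x_0^{b_{20}}x_1^{b_{21}}$ taken modulo $n_3$, and equals $\gcd(n_3,b_{20}n_1+b_{21}n_0)=m$. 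Thus $\det S=m$, and $S$ is an integral homology sphere if and only if $m=1$. (Equivalently, one may simply read $\det S$ off the product formula established in~\cite[Section~5]{MVV19}, an analogue of Corollary~\ref{cor:wly}, in which every factor except the one equal to $m$ cancels under the coprimality hypothesis.)

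The delicate point is this last step: tracking the cyclic quotient singularities through the tower of weighted blow-ups and checking that, under pairwise coprimality, all of their orders but one collapse to $1$ and that $\det(-A)=1$, together with the precise identification of the residual order as $b_{20}n_1+b_{21}n_0\bmod n_3$. Conceptually it is transparent, since the singularity is of splice type and the shape of its graph is dictated by the chain $x_0\to x_1\to x_2\to x_3$, but making the continued-fraction and intersection-multiplicity bookkeeping precise is exactly the technical content imported from~\cite{MVV19}.
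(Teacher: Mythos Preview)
Your overall strategy---invoke the partial resolution of \cite{MVV19}, verify that the dual graph is a tree of rational curves, and then compute $\det S$ via Lemma~\ref{lem:detS}---is exactly what the paper does. But the computational claims you make in the integrality step are wrong, and they happen to land on the correct ``iff $m=1$'' only by accident.

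Concretely, the partial resolution has \emph{two} exceptional curves $\mathcal{E}_1,\mathcal{E}_2$. The intersection matrix $A$ does \emph{not} satisfy $\det(-A)=1$; one finds $\det(-A)=\dfrac{m}{n_0n_1n_2n_3\cdot d}$, where $d$ is the order of the cyclic point $\mathcal{E}_1\cap\mathcal{E}_2$. Nor do the local orders $\det(X,P)$ ``collapse to $1$ except for one equal to $m$'': the singular points of $X$ have orders $n_0$, $n_1$, $d$, then $m$ distinct points on $\mathcal{E}_2$ each of order $n_2$, and one point of order $n_3/m$. Multiplying everything out gives
\[
\det S=\frac{m}{n_0n_1n_2n_3\,d}\cdot n_0\cdot n_1\cdot d\cdot n_2^{\,m}\cdot\frac{n_3}{m}=n_2^{\,m-1},
\]
not $m$. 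The mechanism by which $m$ enters is not a single residual cyclic point of order $m$; it is that the strict transform of $\{x_2=0\}$ meets $\mathcal{E}_2$ in $m$ separate branches, each contributing a factor $n_2$.

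What is missing from your argument is therefore the actual computation of the self-intersections $\mathcal{E}_i^2$. The paper obtains them cleanly by pulling back the Cartier divisor $Y=\{f_1=0\}$ and using $\mathcal{E}_i\cdot\varphi^*Y=0$; you cannot replace this by the row-reductions of Corollaries~\ref{cor:sis}--\ref{cor:wly}, because those exploit a very specific shape (a single blow-up with all off-diagonal entries equal to B\'ezout numbers) that the two-step resolution here does not have.
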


\begin{proof}
Let $\pi_1: \widehat{\mathbb{C}}^4 \to \mathbb{C}^4$ be the weighted blow-up at the origin of $\mathbb{C}^4$
with weights $w_1 = (\frac{n}{n_0},\frac{n}{n_1},\frac{n}{n_2},\frac{n}{n_3})$ where $n = n_0 n_1 n_2 n_3$.
The exceptional divisor of $\pi_1$ is the weighted projective variety $E_1 = \mathbb{P}^3_{w_1}$.
The assumption on the integers $n_i$, $i=0,..,3$ being pairwise coprime implies that the exceptional divisor 
$\mathcal{E}_1$ of the restriction $\varphi_1 = \pi_1|_{\hat{S}}: \hat{S} \to S$ is a rational irreducible 
curve which contains three singular points of $\hat{S}$, namely 
$Q_0 = \hat{H}_0 \cap \mathcal{E}_1$, $Q_1 = \hat{H}_1 \cap \mathcal{E}_1$,
and $P_1 = \hat{H}_2 \cap \mathcal{E}_1 = \hat{Y} \cap \mathcal{E}_1$,
see figure~\ref{fig:1st-resolution-S}. The local type of the singularities at $Q_0$ and $Q_1$ are given by
\begin{equation*}
Q_0: \left\{\begin{aligned}
& \hat{S} = \frac{1}{n_0}(n_1 n_2 n_3, -1) \\
& \mathcal{E}_1: \ x_1 = 0, \quad \hat{H}_0^{\text{red}}: \ x_0 = 0,
\end{aligned}\right. \qquad
Q_1: \left\{\begin{aligned}
& \hat{S} = \frac{1}{n_1} (-1, n_0 n_2 n_3) \\
& \mathcal{E}_1: \ x_0 = 0, \quad \hat{H}_1^{\text{red}}: \ x_1 = 0.
\end{aligned}\right. \\
\end{equation*}
Around $P_1$ the surface $\hat{S}$ can be described inside $\frac{1}{n_2 n_3}(-1, n_0 n_1 n_3, n_0 n_1 n_2)$
as the set of zeros of $x_2^{n_2} - x_0^{b_2'} + x_3^{n_3}  - x_0^{b_3'} x_2^{b_{32}}
+ (x_2^{n_2}-x_0^{b_2'}) R_2'(x_0,x_2)$ where
$b_i' = b_{i,0} \frac{n}{n_0} + \cdots + b_{i,i-1} \frac{n}{n_{i-1}}-n$, $i=2,3$, and $R_i'(0,x_2)=0$.
Since the monomial with higher order will not play any role in the resolution of $S$, roughly speaking
the situation at $P_1$ with variables $[(x_0,x_2,x_3)]$ can be thought of as
\begin{equation}\label{eq:P1}
P_1: \left\{\begin{aligned}
& \hat{S} = \{ x_0^{b_2'} + x_2^{n_2} + x_3^{n_3} = 0 \} \subset \frac{1}{n_2 n_3} (-1, n_0 n_1 n_3, n_0 n_1 n_2) \\
& \mathcal{E}_1: \ x_0 = 0, \quad \hat{H}_2^{\text{red}}: \ x_2 = 0, \quad \hat{Y}: \ x_0^{b_2'} + x_2^{n_2} = 0.
\end{aligned}\right.
\end{equation}

\begin{figure}[ht]
\begin{tikzpicture}[scale=1]
\coordinate (A) at (0,1.35);
\coordinate (P1) at (2.5,1.5);
\coordinate (B) at (3.25,1.85);
\draw [name path=AP1] [thick] (A) to[out=25,in=150] (P1);
\draw [name path=P1B] [thick] (P1) to[out=50,in=180] (B);
\draw [name path=h0] [dashed] (0.5,1.25) node[below]{$\hat{H}_0$} -- (0.5,2.4);
\draw [name path=h1] [dashed] (1.5,1.25) node[below]{$\hat{H}_1$} -- (1.5,2.4);
\path [name intersections={of=AP1 and h0,by=Q0}];
\path [name intersections={of=AP1 and h1,by=Q1}];
\draw [dashed] ($(B)+(-1.3,0.2)$) to[out=330,in=120] (P1)
    to[out=70,in=200] ($(B)+(0.1,0.45)$) node[right]{$\hat{H}_2$ ($m$ components)};
\draw [dotted] ($(B)+(-1.3,0.4)$) node[above]{$\hat{Y}$} to[out=330,in=120] (P1)
    to[out=70,in=220] ($(B)+(-0.1,0.65)$);
\node [left] at (A) {$\mathcal{E}_1$};
\draw [fill] (Q0) node[above left]{$Q_0$} circle [radius=0.07];
\draw [fill] (Q1) node[above left]{$Q_1$} circle [radius=0.07];
\draw [fill] (P1) node[below]{$P_1$} circle [radius=0.07];
\end{tikzpicture}
\caption{First step of the $\Q$-resolution of $S$.}
\label{fig:1st-resolution-S}
\end{figure}

The points $Q_0$ and $Q_1$ already have $\mathbb{Q}$-normal crossings, so one does not need
to blow them up anymore. Consider the previous coordinates around $P_1$ and let $\pi_2$ be
the blow-up at $P_1$ with weights $w_2 = (1,\frac{b_2'}{n_2},\frac{b_2'}{n_3})$.
The exceptional divisor of $\pi_2$ is $E_2 = \mathbb{P}^2_{w_2}/G$ where $G$ is a cyclic
group of order $n_2 n_3$ acting diagonally as in~\eqref{eq:P1}. The exceptional divisor
$\mathcal{E}_2$ of the restriction $\varphi_2|_{\hat{S}}: \hat{S} \to \hat{S}$ is again
a rational irreducible curve containing $2+m$ cyclic quotient
singular points of $\hat{S}$, namely $Q_{12} = \mathcal{E}_1 \cap \mathcal{E}_2$,
$P_2 = \hat{Y} \cap \mathcal{E}_2$, and $m$ points $Q_{2j} \in \hat{H}_2 \cap \mathcal{E}_2$.

\begin{figure}[ht]
\begin{tikzpicture}[scale=1]
\coordinate (A) at (0,1.35);
\coordinate (B) at (3,1.35);
\coordinate (C) at ($(A)+(2,0)$);
\coordinate (D) at ($(B)+(2,0)$);
\draw [name path=AB] [thick] (A) to [out=25, in=150] (B);
\node [left] at (A) {$\mathcal{E}_1$};
\draw [name path=CD] [thick] (C) to [out=25, in=150] (D);
\node [right] at (D) {$\mathcal{E}_2$};
\draw [name path=h0] [dashed] (0.5,1) node[below]{$\hat{H}_0$} -- (0.5,2.4);
\draw [name path=h1] [dashed] (1.5,1) node[below]{$\hat{H}_1$} -- (1.5,2.4);
\draw [name path=h2a] [dashed] (3.4,1) node[below right]{$\hat{H}_2$} -- (3.4,2.4);
\draw [name path=h2b] [dashed] (4.05,1) -- (4.05,2.4);
\draw [name path=h3] [white, dashed] (4.75,1) -- (4.75,2.4);
\path [name intersections={of=AB and h0,by=Q0}];
\path [name intersections={of=AB and h1,by=Q1}];
\path [name intersections={of=CD and h2a,by=Q2a}];
\path [name intersections={of=CD and h2b,by=Q2b}];
\path [name intersections={of=CD and h3,by=P2}];
\path [name intersections={of=AB and CD,by=Q12}];
\draw [dotted] ($(P2)+(-0.45,0.7)$) to [out=330, in=100] (P2) to [out=80, in=210]
    ($(P2)+(0.45,0.7)$) node[right]{$\hat{Y}$};
\draw [fill] (Q0) node[above left]{$Q_0$} circle [radius=0.07];
\draw [fill] (Q1) node[above right]{$Q_1$} circle [radius=0.07];
\draw [fill] (Q12) node[below]{$Q_{12}$} circle [radius=0.07];
\draw [fill] (Q2a) node[above left]{$Q_{2j}$} circle [radius=0.07];
\draw [fill] (Q2b) circle [radius=0.07];
\draw [fill] (P2) node[below]{$P_2$} circle [radius=0.07];
\node [above right] at (Q2a) {$\stackrel{m}{\cdots}$};
\end{tikzpicture}
\caption{$\Q$-resolution of $S$.}
\label{fig:2nd-resolution-S}
\end{figure}

The composition $\varphi = \varphi_1 \circ \varphi_2 : \hat{S} \to S$ is a $\Q$-resolution of $S$
and the order of the groups at $Q_{12}$, $Q_{2j}$, and $P_2$ are $d$, $n_2$, and $\frac{n_3}{m}$, respectively.
Since the $\Q$-resolution graph is a tree and the exceptional divisors are isomorphic to $\mathbb{P}^1$
the link of $S$ is a rational homology sphere. In order to compute $\det S$ one needs to calculate the 
self-intersection numbers $\mathcal{E}_i^2 = -a_i$, $i=1,2$, which can be done by exploiting our information on 
the curve $Y$ in the partial resolution of $S$. First, note that the intersection of $\mathcal{E}_2$ with $\hat{Y}$ at 
$P_2$ is $m$. Second,
$$
\varphi^* Y = \hat{Y} + N_1 \mathcal{E}_1 + N_2 \mathcal{E}_2
$$
where $N_1 = n_0 n_1 n_2 n_3 = n$ and $N_2 = \frac{b_2'+n}{m}$. Since $\mathcal{E}_i \cdot \varphi^*Y = 0$, $i=1,2$, one obtains that
$a_1 = \frac{N_2}{N_1 d}$ and $a_2 = \frac{m+\frac{N_1}{d}}{N_2}$. Therefore the determinant of the intersection
matrix is given by
$$
\det (A) = \det \begin{pmatrix} - a_1 & \frac{1}{d} \\ \frac{1}{d} & - a_2 \end{pmatrix}
= \frac{m}{N_1 d}.
$$
By Lemma~\ref{lem:detS} one has
$$
\det S = \det(-A) n_0 n_1 d n_2^m \frac{n_3}{m} = n_2^{m-1}.
$$
Therefore by Corollary~\ref{cor:QHS} the link of $S$ is a integral homology sphere
if and only if $\det S = 1$, or equivalently, $m=1$ as claimed.
\end{proof}

\begin{rem}
If the exponents $n_i$'s are not pairwise coprime, then $\mathcal{E}_1 = \bigsqcup_j \mathcal{E}_{1j}$ has $n_{23} = \gcd(n_2,n_3)$ irreducible
components and $\mathcal{E}_2$ is irreducible. They have genus
$$
g(\mathcal{E}_{1j}) = \frac{1}{2} \left( \frac{n_{123}}{n_{23}} - 1 \right) \left( \frac{n_{023}}{n_{23}} - 1 \right)
\quad \text{and} \quad g(\mathcal{E}_2) = \frac{1}{2} (n_{23}-1)(m-1),
$$
where $m = \gcd(n_3,b)$ with $b = b_{20}n_1+b_{21}n_0$. The determinant of $S$ can be rewritten as
$$
\det S = \left( \frac{b}{m} \right)^{n_{23}-1} \left( \frac{N_1}{\alpha} \right)^{n_{123}-n_{23}}
\left( \frac{N_1}{\beta} \right)^{n_{023}-n_{23}} \left( \frac{n_2}{n_{23}} \right)^{m-1}
$$
where $N_1 = \lcm(n_0,n_1,n_2,n_3)$, $\alpha = \lcm(n_1,n_2,n_3)$, $\beta = \lcm(n_0,n_2,n_3)$,
and $n_{ijk} = \frac{n_i n_j n_k}{\lcm(n_i,n_j,n_k)}$. From here it can easily be shown that the link of $S$ is
an integral homology sphere if and only if $\gcd(n_i,n_j)=1$, $i\neq j$, and $m=1$.
The details are left to the reader.
\end{rem}

\bibliographystyle{amsplain}

\begin{thebibliography}{10}

\bibitem{artal-formeJordan}
E.~Artal, \emph{Forme de {J}ordan de la monodromie des singularit\'{e}s
  superisol\'{e}es de surfaces}, Mem. Amer. Math. Soc. \textbf{109} (1994),
  no.~525, x+84.

\bibitem{Artal94}
\bysame, \emph{Sur les couples de {Z}ariski}, J. Algebraic Geom. \textbf{3}
  (1994), no.~2, 223--247.

\bibitem{ac:98}
E.~Artal and J.~Carmona, \emph{Zariski pairs, fundamental groups and
  {A}lexander polynomials}, J. Math. Soc. Japan \textbf{50} (1998), no.~3,
  521--543.

\bibitem{ACM19}
E.~Artal, J.I. Cogolludo-Agust{\'in}, and J.~Mart{\'i}n-Morales, \emph{{Cyclic branched
  coverings of surfaces with abelian quotient singularities}}, accepted in
  Indiana U. Math. J., preprint available at \texttt{arXiv:1912.08670 [math.AG]}.

\bibitem{ACO-Kummer}
E.~Artal, J.I. Cogolludo-Agust{\'in}, and J.~Ortigas-Galindo, \emph{Kummer covers and braid
  monodromy}, J. Inst. Math. Jussieu \textbf{13} (2014), no.~3, 633--670.

\bibitem{ACM-orbifoldgroups}
E.~Artal, J.I. Cogolludo-Agust{\'i}n, and D.~Matei, \emph{Orbifold groups,
  quasi-projectivity and covers}, J. Singul. \textbf{5} (2012), 33--47.

\bibitem{ABLM-milnor-number}
E.~Artal, J.~Fern{\'a}ndez~de Bobadilla, I.~Luengo, and A.~Melle, \emph{Milnor
  number of weighted-{L}\^e-{Y}omdin singularities}, Int. Math. Res. Not. IMRN
  (2010), no.~22, 4301--4318.

\bibitem{AMO-Intersection}
E.~Artal, J.~Mart\'{\i}n-Morales, and J.~Ortigas-Galindo, \emph{Intersection
  theory on abelian-quotient {$V$}-surfaces and {$\bf Q$}-resolutions}, J.
  Singul. \textbf{8} (2014), 11--30.

\bibitem{ji-ruben-Artin}
R.~Blasco-Garc\'{i}a and J.I. Cogolludo-Agust\'{i}n, \emph{Quasi-projectivity
  of even {A}rtin groups}, Geom. Topol. \textbf{22} (2018), no.~7, 3979--4011.

\bibitem{BL}
M.~Borodzik and C.~Livingston, \emph{Heegaard {F}loer homology and rational
  cuspidal curves}, Forum Math. Sigma \textbf{2} (2014), e28, 23.

\bibitem{ji-fundamental}
J.I. Cogolludo-Agust{\'in}, \emph{Fundamental group for some cuspidal curves}, Bull. London
  Math. Soc. \textbf{31} (1999), no.~2, 136--142.

\bibitem{Dolgachev82}
I.~Dolgachev, \emph{Weighted projective varieties}, Group actions and vector
  fields ({V}ancouver, {B}.{C}., 1981), Lecture Notes in Math., vol. 956,
  Springer, Berlin, 1982, pp.~34--71.

\bibitem{es:82}
H.~Esnault, \emph{Fibre de {M}ilnor d'un c\^one sur une courbe plane
  singuli\`ere}, Invent. Math. \textbf{68} (1982), no.~3, 477--496.

\bibitem{Esnault-Viehweg82}
H.~Esnault and E.~Viehweg, \emph{Rev\^etements cycliques}, Algebraic threefolds
  ({V}arenna, 1981), Lecture Notes in Math., vol. 947, Springer, Berlin-New
  York, 1982, pp.~241--250.

\bibitem{sage89}
W.A.~Stein et~al., \emph{{S}age {M}athematics {S}oftware ({V}ersion 8.9)}, The
  Sage Development Team, 2019, {\tt http://www.sagemath.org}.

\bibitem{fblmn}
J.~Fern\'{a}ndez~de Bobadilla, I.~Luengo, A.~Melle, and A.~N\'{e}methi,
  \emph{On rational cuspidal projective plane curves}, Proc. London Math. Soc.
  (3) \textbf{92} (2006), no.~1, 99--138.

\bibitem{fz}
H.~Flenner and M.G. Za{\u\i}denberg, \emph{On a class of rational cuspidal
  plane curves}, Manuscripta Math. \textbf{89} (1996), no.~4, 439--459.

\bibitem{fujita:82}
T.~Fujita, \emph{On the topology of noncomplete algebraic surfaces}, J. Fac.
  Sci. Univ. Tokyo Sect. IA Math. \textbf{29} (1982), no.~3, 503--566.

\bibitem{Fulton-Intersection}
W.~Fulton, \emph{Intersection theory}, second ed., Ergebnisse der Mathematik und
  ihrer Grenzgebiete. 3. Folge. A Series of Modern Surveys in Mathematics, 
  vol.~2, Springer-Verlag, Berlin, 1998.
  
\bibitem{GAP4}
The GAP~Group, \emph{{GAP -- Groups, Algorithms, and Programming, Version
  4.10.2}}, 2019.

\bibitem{KP:17}
M.~Koras and K.~Palka, \emph{The {C}oolidge-{N}agata conjecture}, Duke Math. J.
  \textbf{166} (2017), no.~16, 3085--3145.

\bibitem{koras2019Complex}
\bysame, \emph{Complex planar curves homeomorphic to a line have at most four singular points}, 
preprint available at \texttt{arXiv:1905.11376 [math.AG]}.

\bibitem{Libgober-alexander}
A.~Libgober, \emph{Alexander polynomial of plane algebraic curves and cyclic
  multiple planes}, Duke Math. J. \textbf{49} (1982), no.~4, 833--851.

\bibitem{Libgober-characteristic}
\bysame, \emph{Characteristic varieties of algebraic curves}, Applications of
  {A}lgebraic {G}eometry to {C}oding {T}heory, {P}hysics and {C}omputation
  (Eilat, 2001), Kluwer Acad. Publ., Dordrecht, 2001, pp.~215--254.

\bibitem{Loeser-Vaquie-Alexander}
F.~Loeser and M.~Vaqui{\'e}, \emph{Le polyn\^ome d'{A}lexander d'une courbe
  plane projective}, Topology \textbf{29} (1990), no.~2, 163--173.

\bibitem{lu:87a}
I.~Luengo, \emph{The $\mu$-constant stratum is not smooth}, Invent. Math.
  \textbf{90} (1987), no.~1, 139--152.

\bibitem{LMN-LinksSuperisolated}
I.~Luengo-Velasco, A.~Melle-Hern\'{a}ndez, and A.~N\'{e}methi, \emph{Links and
  analytic invariants of superisolated singularities}, J. Algebraic Geom.
  \textbf{14} (2005), no.~3, 543--565.

\bibitem{JM-semistable}
J.~Mart\'{\i}n-Morales, \emph{Semistable reduction of a normal crossing
  {$\mathbb{Q}$}-divisor}, Ann. Mat. Pura Appl. (4) \textbf{195} (2016), no.~5,
  1749--1769.

\bibitem{MVV19}
J.~{Mart{\'\i}n-Morales}, W.~{Veys}, and L.~{Vos}, \emph{{The monodromy
  conjecture for a space monomial curve with a plane semigroup}}, 
  preprint available at \texttt{arXiv:1912.06005 [math.AG]}.

\bibitem{mum:61}
D.~Mumford, \emph{The topology of normal singularities of an algebraic surface
  and a criterion for simplicity}, Inst. Hautes \'Etudes Sci. Publ. Math.
  (1961), no.~9, 5--22.

\bibitem{binder}
{P}roject {J}upyter and \emph{et al.}, \emph{{B}inder 2.0 - {R}eproducible,
  interactive, sharable environments for science at scale}, {P}roceedings of
  the 17th {P}ython in {S}cience {C}onference ({F}atih {A}kici, {D}avid
  {L}ippa, {D}illon {N}iederhut, and {M} {P}acer, eds.), 2018, pp.~113 -- 120.

\bibitem{Sabbah-Alexander}
C.~Sabbah, \emph{Modules d'{A}lexander et {$\mathcal D$}-modules}, Duke Math.
  J. \textbf{60} (1990), no.~3, 729--814.

\bibitem{steenbrink-mhs}
J.~H.~M. Steenbrink, \emph{Mixed {H}odge structure on the vanishing
  cohomology}, Real and complex singularities ({P}roc. {N}inth {N}ordic
  {S}ummer {S}chool/{NAVF} {S}ympos. {M}ath., {O}slo, 1976), 1977,
  pp.~525--563.

\bibitem{Vaquie-irregulatity}
M.~Vaqui\'{e}, \emph{Irr\'{e}gularit\'{e} des rev\^{e}tements cycliques des
  surfaces projectives non singuli\`eres}, Amer. J. Math. \textbf{114} (1992),
  no.~6, 1187--1199.

\bibitem{Zariski-irregularity}
O.~Zariski, \emph{On the irregularity of cyclic multiple planes}, Ann. of Math.
  (2) \textbf{32} (1931), no.~3, 485--511.

\end{thebibliography}

\providecommand{\bysame}{\leavevmode\hbox to3em{\hrulefill}\thinspace}
\providecommand{\MR}{\relax\ifhmode\unskip\space\fi MR }
\providecommand{\MRhref}[2]{%
  \href{http://www.ams.org/mathscinet-getitem?mr=#1}{#2}
}
\providecommand{\href}[2]{#2}


\end{document}